\newcommand{\SLsmall}{{\bf L}^{\circ}}
\newcommand{\SLxsmall}{{L}_x^{\circ}}
\newcommand{\SLysmall}{{L}_y^{\circ}}
\newcommand{\SL}{{\bf L}}
\newcommand{\SLx}{{L}_x}
\newcommand{\SLy}{{L}_y}
\newcommand{\KISO}{\mathbb{K}_{iso}}
\newcommand{\KISOSMALL}{\mathbb{K}_{iso}^{small}}
\newcommand{\KGM}{\mathbb{K}_{GM}}
\newcommand{\KGMsmall}{\mathbb{K}_{GM}^{small}}
\newcommand{\vbolus}{{ v}^*}
\newcommand{\wbolus}{w^*}
\newcommand{\GMOP}{\mathfrak{D}_{\KGM}}
\newcommand{\REDIOP}{\mathfrak{D}_{\KISO}}
\newcommand{\REDIOPSMALL}{\mathfrak{D}_{\KISOSMALL}}
\newcommand{\GMOPSMALL}{\mathfrak{D}_{\KGMsmall}}
\newcommand{\rhoreg}{{\tilde{\rho}}}
\newcommand{\rhobar}{{\tilde{\rho}}}%{{{\rho}}}
\newcommand{\VV}{{V_{v}}}
\newcommand{\VT}{{V_{\theta}}}
\newcommand{\VS}{{V_{S}}}
\newcommand{\VC}{{V_{C}}}
\newcommand{\HV}{{H_{v}}}
\newcommand{\HT}{{H_{\theta}}}
\newcommand{\HS}{{H_{S}}}
\newcommand{\HtwoV}{{H_{v}^{2}}}
\newcommand{\HtwoT}{{H_{\theta}^{2}}}
\newcommand{\HtwoS}{{H_{S}^{2}}}
\newcommand{\tcons}{\theta}%{{\Theta}}
\newcommand{\salabs}{S}%{{\bf S}}
\newcommand{\salgen}{{S}}
\newcommand{\Omegaint}{{{\Omega}_{0,1}}}%{{\accentset{\circ}{\Omega}}}
\newcommand{\Omegainthalf}{{{\Omega}_{\scriptstyle 1,2}}}
\newcommand{\Lsq}{{L^{2}(\Omega)}}
\newcommand{\Hone}{{H^{1}(\Omega)}}
\newcommand{\Htwo}{{H^{2}(\Omega)}}
\newcommand{\Htwoint}{{H^{2}(\Omegaint)}}
\newcommand{\Honeinthalf}{{H^{1}(\Omegainthalf)}}
\newcommand{\Htwointhalf}{{H^{2}(\Omegainthalf)}}
\newcommand{\taperbound}{\pi_{\partial\Omega}}
\newcommand{\LinftyT}{L^\infty_{I_\theta}(\Omega)}
\newcommand{\LinftyS}{L^\infty_{I_S}(\Omega)}
\newcommand{\LinftyC}{L^\infty_{I_C}(\Omega)}
\newcommand{\Linftyrho}{L^\infty_{I_\rho}(\Omega)}
\newcommand{\LinftyTS}{L^\infty_{{I_\theta}\times I_{S}}(\Omega)}
\newtheorem{theorem}{Theorem}[section]
\newtheorem{lemma}[theorem]{Lemma}
\newtheorem{corollary}[theorem]{Corollary}
\newtheorem{definition}[theorem]{Definition}
\newtheorem{remark}[theorem]{Remark}
\begin{document}
%%%%%%%%%%%%%%%%%%%%%%%%%%%%%%%%%%%%%%
%\begin{frontmatter}
\begin{titlepage}
%%%%%%%%%%%%%%%%%%%%%%%%%%%%%%%%%%%%%%%%%%%%%%%%%%%%%%%%%%%%%%%%%%%%%%%%%%%%%%%%%%%%%%%%%%%%%%%
%\title{Regularity Theory for  the Gent-McWilliams-Redi Eddy Parametrization Model for Large-Scale Ocean Dynamics I: Weak Solutions} 
\title{Global Well-Posedness of the Primitive Equations of Large-Scale Ocean Dynamics with  the Gent-McWilliams-Redi Eddy Parametrization Model}%\\  Part I: Weak Solutions} 
%%%%%%%%%%%%%%%%%%%%%%%%%%%%%%%%%%%%%%%%%%%%%%%%%%%%%%%%%%%%%%%%%%%%%%%%%%%%%%%%%%
%%%%%%%%%%%%%
%%\begin{frontmatter}
\author{Peter Korn\footnote{\small Max Planck Institute for Meteorology, Hamburg, Germany}, Edriss S. Titi
%\address{Max Planck Institute for Meteorology,\\
%Bundesstr. 55, 20146 Hamburg, Germany\\
%Telephone: (+49 40) 411 73 470\\
%%Fax: (+49 40) 411 73 298\\
%email peter.korn@mpimet.mpg.de}
%\footnote{\small $^1$ Max Planck Institute for Meteorology, Hamburg, Germany}%\footnote{email: peter.korn@mpimet.mpg.de}\\
\footnote{
Department of Mathematics, Texas A \& M University, College Station, TX 77843, USA, 
Department of Applied Mathematics and Theoretical Physics, University of Cambridge CB3 0WA, UK and
Department of Computer Science and Applied Mathematics, Weizmann Institute of Science, Rehovot 76100, Israel,
%{email: {est42@damtp.cam.ac.uk}}.
%  %
}
 }

\maketitle
\setcounter{page}{1}
%\begin{center}
%\vskip-0.25cm
%{\Large \textcolor{red}{Draft}}
%\vskip-4cm
%\end{center}

\begin{abstract}
\it{
We prove global well-posedness of the ocean primitive equations coupled to advection-diffusion equations of the oceanic tracers temperature and salinity that are supplemented by the eddy parametrization model due to Gent-McWilliams and Redi.  This parametrization forms a milestone in global ocean modelling and constitutes a central part of any general ocean circulation model computation. The eddy parametrization adds a secondary transport velocity to the tracer equation and renders the original Laplacian operators in the advection-diffusion equations nonlinear, with a diffusion matrix that depends via the equation of state in a nonlinear fashion on both tracers simultaneously. The eddy parametrization of Gent-McWilliams-Redi augments the complexity of the mathematical analysis of the whole system which we present here. We show first that weak solutions exist globally in time, 
provided the parametrization uses a regularized density. Then we prove by a detailed analysis of the eddy operators the global well-posedness. 
Our results apply also to the ``small-density slope approximation'' that is commonly used in global ocean simulations.   
}
\end{abstract}
\end{titlepage}

%\end{frontmatter}
%\newpage
%% %\vspace{8cm}!single spacing
%% %\vspace{2cm}%!double spacing
%% \begin{frontmatter}
%% \vspace{0.5cm}
%% \newpage
% \vspace*{-1cm}
%\setcounter{tocdepth}{2}
% \tableofcontents
% \begin{frontmatter}
%% \addtocontents{toc}{\protect\vspace*{1cm}}
% \end{frontmatter}
%
%% %%%%%%%%%%%%%%%%%%%%%%%%%%%%%%%%%%%%%%
 %\newpage
%%%%%%%%%%%%%%%%%%%%%%%%%%%%%%%%%%%%%%%
%\linenumbers
%%%%%%%%%%%%%%%%%%%%%%%%%%%%%%%%%%%%%%
\section{Introduction}
%%%%%%%%%%%%%%%%%%%%%%%%%%%%%%%%%%%%%%%%
The hydrostatic Boussinesq equations of the ocean, also referred to as the {\it ocean primitive equations}, are the classical dynamical equations used in ocean and climate research (see e.g. \cite{IPCC, CORE}). A central component is the subgridscale parametrization of mesoscale eddies due to Gent-McWilliams and Redi \cite{GENT_MCWILLIAMS}, \cite{REDI}.  This parametrization is a major achievement in the science of ocean modelling \cite{GM_Science} and forms a cornerstone of global ocean modelling. Virtually every global ocean general circulation model relies on, and incorporates, this parametrization (see e.g. \cite{CORE}). The mathematical analysis of the ocean primitive equations with the Gent-McWilliams-Redi parametrization forms the topic of this paper. 

The ocean primitive equations model the large-scale circulation of the ocean as a thin layer of a weakly compressible fluid on a rotating sphere under the Boussinesq  and the hydrostatic approximation. % (see e.g. \cite{VALLIS}). 
The ocean primitive equations consist of a coupled system of evolution equations for horizontal velocity $v=(v_1,v_2)$, potential temperature $\theta$ and salinity $S$ that is closed by an equation of state % $\rho=\rho(\theta,S, p_{st})$
 that describes the density as function of temperature, salinity and static pressure. 
%Denote the horizontal velocity field , potential temperature and salinity $S$. 
%Denote by $\Omega:=M\times (-h,0)\subseteq \mathbb{R}^3$ a cylindrical domain, where the bottom $M\subseteq\mathbb{R}^2$ is %a smooth and 
%bounded and the depth $h>0$. 
In this work we investigate the ocean primitive equations with Gent-McWilliams-Redi eddy parametrization, which in cartesian coordinates 
%$(x,y,z)\in \Omega$ 
are given by
%\begin{equation}\begin{split}\label{PE_OCEAN}
\begin{subequations}\label{PE_OCEAN}
\begin{align}
&\partial_t v+(v\cdot\nabla) v +w\partial_z v 
+\nabla p 
%+\nabla\big( \int_{-B}^zg\rho(x,y,\xi,t)\, d\xi + p_S\big)
+f\vec{k}\times v 
%&\qquad
-\frac{1}{Re_1}\triangle\,v - \frac{1}{Re_2}\partial^2_{zz} v=0,\label{PE_OCEAN_1}\\
&\partial_z p +g\rho=0, \label{PE_OCEAN_2}\\
&\nabla\cdot v+\partial_zw=0,\label{PE_OCEAN_3}\\ 
&\partial_t \theta+ (v\cdot\nabla) \theta 
+w\partial_z \theta
+\REDIOP(\rhobar, K_I, K_D)(\theta)+\GMOP(\rhobar, \kappa)(\theta)
=0,\label{PE_OCEAN_4}\\
&\partial_t S+ (v\cdot\nabla) S 
+w\partial_z S
+\REDIOP(\rhobar, K_I, K_D) (S)+\GMOP(\rhobar, \kappa)(S)=0,\label{PE_OCEAN_5}%\\
%&\rho=\rho(\theta,S,p_{st}),\label{PE_OCEAN_6}
\end{align}
\end{subequations}
with the equation of state $\rho=\rho(\theta,S,p_{st})$,
where $p_{st}$ denotes the static pressure (see (\ref{GIBBS_PRESSURE_BOUSSINESQ}) ), $w$ the vertical velocity, $\rho$ the equation of state, $f$ the Coriolis parameter, and $g$ the gravitational constant.
The numbers $Re_1,Re_2$ denote horizontal and vertical viscosity, $K_I,K_D$ are the isoneutral and dianeutral diffusivity, $\kappa$ is referred to as eddy advection parameter. The operators $\nabla, \nabla\cdot$ and $\triangle$ denote the horizontal gradient, divergence and Laplacian, respectively. 
%The subscript $``3"$ indicates that an operator acts in the three directions. %The partial derivative with respect to the vertical variable $``z"$  is denoted by $\partial_z$. 

The above system (\ref{PE_OCEAN}) differs from the classical primitive equations by introducing in the equations for the oceans active tracers temperature and salinity  (\ref{PE_OCEAN_4}), (\ref{PE_OCEAN_5}), the {\it isoneutral eddy diffusion operator} $\REDIOP$ and the {\it eddy advection operator} $\GMOP$. 
%that together implement the mesoscale eddy parametrization. %In addition to the above the equation of state (\ref{PE_OCEAN_6}) expresses the density $\rho$ as a non-linear function of the potential temperature $\theta$, the salinity $S$ and the pressure $p_{st}$.%, and couples via the hydrostatic equation (\ref{PE_OCEAN_2}) to the pressure gradient in the velocity equation (\ref{PE_OCEAN_1}). 
The presence of these two nonlinear, anisotropic and flow-dependent operators changes the nature of the originally linear advection-diffusion equations (\ref{PE_OCEAN_4}), (\ref{PE_OCEAN_5}). Therefore in comparison to the classical analysis of the primitive equations  the additional nonlinearity pose the main challenge in the mathematical analysis of this paper.

%Indeed, previous mathematical analysis of the classical primitive equations such as \cite{LIONS_TEMAM_WANG_2} or 
%\cite{CaoTiti} has not considered  the non-linear eddy parametrization terms by Gent-McWilliams and Redi. % via $\REDIOP, \GMOP$. 
The work presented here capitalizes on the large body of literature of the mathematical analysis of the classical primitive equations,  pioneered  by Lions-Temam-Wang \cite{LIONS_TEMAM_WANG_2}, where the global existence of {\it weak solutions} was proven. Cao-Titi established in \cite{CaoTiti} global existence and uniqueness of {\it strong solutions} for initial data in the Sobolev space $H^1$
and with Neumann boundary conditions for velocity on top and bottom of the domain. A different proof is given by Kobelkov \cite{Kobelkov}. For Dirichlet boundary conditions well-posedness in $H^1$ was obtained by Kukavica -Ziane \cite{KUKAVICA_ZIANE1}. These works do rely on energy estimates. 
A  $L^p$-approach for the primitive equation was developed in Hieber-Kashiwabara \cite{HIEBER}.
%, following the Fujita-Kato approach for the Navier-Stokes equations. %This methodology allows to weaken the differentiability assumption on the initial condition. %This was extended in \cite{HIEBER_2016} to the primitive equation with temperature and salinity using a linear equation of state. 
The extension of global well-posedness to nonlinear equations of state can be found in Korn \cite{KORN_NLEOS}. 
%So far all available analysis of the primitive equations has assumed a linear equation of state. 
%%Lions-Temam-Wang remark that using a nonlinear equation of state "lead to further mathematical difficulties" (p. 1013 in \cite{LIONS_TEMAM_WANG_2}. 
%%To include a nonlinear equation of state renders the pressure gradient in the velocity equations nonlinear with respect to temperature and salinity. As a result the velocity equation comprises two nonlinearities, the velocity advection and the pressure gradient. %, where the latter one depends through the vertical integral of the density in a nonlinear fashion on temperature and salinity.

The ocean modelling literature is flexible about the exact specifications in the mathematical definitions of the operators $\REDIOP, \GMOP$. The original publication by Gent-McWilliams \cite{GENT_MCWILLIAMS} (see also \cite{GENT_WILLEBRAND}) formulates on one hand the eddy closure in the form of a (continuous) PDE, but on the other hand develops its physical arguments that results in the specific form of the eddy closure in terms of (finite dimensional) numerical models. 
This ambiguity suggests an interpretation of the continuous ocean model equations in an averaged sense, where the averages are related to specific grid sizes (cf. chapter 8 in \cite{GRIFFIES_BOOK}). Our work investigates the mathematical structure of this important ocean parametrization 
from the perspective of mathematical analysis of hydrodynamic PDE. We investigate the eddy parametrization of Gent-McWilliams-Redi in the continuous limit
as a nonlinear PDE. 
%Instead the research has focused on the diffusivity parameters $K_I, K_D, \kappa$ and the affect of these parameters on the simulated ocean circulation  (see e.g. \cite{DanabasMarshall, Eden_1, Visbeck}). In the mathematical literature the advective part $\GMOP$ was used in \cite{Bianchi_Duchene} to study the hydrostatic limit of stratified flows without viscosity. \textcolor{red}{Our work investigates the mathematical structure of this important ocean parametrization 
%from the perspective of mathematical analysis of hydrodynamic PDE.}

We start with establishing the existence of weak solutions for (\ref{PE_OCEAN}). 
For weak solutions a density regularization inside the eddy parametrization is necessitated to prove a' priori bounds on the advective part (the ``GM-part'')
of the eddy model and for showing the convergence to a weak solution.
%Here the advective part (the ``GM-part''), not by the eddy-induced diffusion part (the ``Redi-part''), 
% necessitates the regularization of the density inside the eddy parametrization.
%This requires a regularization of the density inside the eddy parametrization due to the advective part (the ``GM-part'') of the eddy parametrization model, not by the eddy-induced diffusion part (the ``Redi-part'').  
%For proving the existence of weak solutions in the small slope approximation a regularization is not necessary.  
For the special case of the {\it small density slope approximation} -a simplification of the full eddy operator, preferably used in ocean models, under the physically appropriate assumption that density slopes are small-  the a' priori estimates for the finite dimensional approximative system can be obtained without regularization. This explains the stability of numerical codes, but still allows for noisy solutions. For proving the convergence of the approximative to weak solutions we have to regularize as in the full tensor case. The uniqueness of weak solutions is open.   
%Existence and uniqueness of strong solutions of the primitive equations {\it without} the mesoscale eddy parametrization and Laplacian diffusion instead 
%was first proven by Cao-Titi \cite{CaoTiti}, with initial data in the Sobolev space $H^1$ and Neumann boundary conditions for velocity on top and bottom of the domain.  A different proof is given by Kobelkov \cite{Kobelkov}. For Dirichlet boundary conditions well-posedness in $H^1$ was obtained by Kukavica -Ziane \cite{KUKAVICA_ZIANE1}.
%A  $L^p$-approach for the primitive equation was developed in Hieber-Kashiwabara \cite{HIEBER}, following the Fujita-Kato approach for the Navier-Stokes equations. This methodology allows to weaken the differentiability assumption on the initial condition. This was extended in \cite{HIEBER_2016} to the primitive equation with temperature and salinity using a linear equation of state. The extension of global well-posedness to nonlinear equations of state was proven in Korn \cite{KORN_NLEOS}. 

For proving existence and uniqueness of (strong) solutions to  (\ref{PE_OCEAN}) globally in time within the Sobolev space $H^1$ we apply energy estimates, following the approach of Cao-Titi \cite{CaoTiti}. %The regularity of strong solutions necessitates a $L^\infty_tH^1_x$-bound on temperature and salinity. 
The regularity of strong solutions demands a' priori estimates in $H^1$ by integrating against the Laplacian of test functions.
%a second-order elliptic operator usially the Laplacian, is applied. 
In this work we replace in the temperature and salinity equations the Laplace operator by  $\REDIOP$. This second-order diffusion operator has divergence-gradient structure, its mixing tensor 
is anisotropic, nonlinear and time-dependent. Establishing the a' priori $H^1$-estimates requires control of the spatial and temporal regularity of the elements of 
the symmetric mixing tensor of $\REDIOP$ and of the skew-symmetric tensor of $\GMOP$, which both are constructed via functions of density derivatives. %in  (\ref{PE_OCEAN}) contain nonlinear functions of derivatives of density.
%, which depend nonlinearily on temperature and salinity derivatives. 
This additional complexity, absent in the previous works cited above, requires us to prove the following novel results:
% in order to establish the well-posedness of the ocean primitive equations with eddy closure:% $L^\infty_tH^1_x$-estimate:% on temperature and salinity:
\begin{enumerate}%[(i)]  
%he time-dependency of $\REDIOP$ necessitates an $L^\infty_t H^1_x$ estimate on the time derivative of the density  (see Lemma \ref{LEMMA_DIFFERENCES}). These estimates were not needed in previous works, they are derived in a novel way from the density equation and are utilized 
%for the Aubin-Lions compactness Lemma as well as for $L^\infty_tH^1_x$-bounds on temperature and salinity. 
\item We prove  in Lemma \ref{LEMMA_ELLIPTIC_REG_REDI} an elliptic regularity result for the nonlinear diffusion operator $\REDIOP$. %, which replaces in our work the Laplace operator. 
This is used in the $L^\infty_tH^1_x$-bound on temperature and salinity. Elliptic regularity is classical for the Laplace operator but novel for $\REDIOP$. % and its time-dependent, spatially varying, nonlinear diffusion coefficients. 
\item We estimate the temporal regularity of the mixing tensor by means of an evolution equation for density. Such an equation is not part of the original system (\ref{PE_OCEAN}) of primitive equations but it can be derived from the equation of state. From this equation we estimate the rate of change of the gradient of density in time (see (\ref{density_eq}), Lemma \ref{LEMMA_DIFFERENCES} and (\ref{PROOF_STRONG_SOLUTION_2_time_deriv4a})).
%These estimates are derived  from the density equation requires a density regularization (see (\ref{COROLLARY_SLOPES_1_PF_4})).
%requires estimates on time derivatives that in our case 
%These estimates become involved due to The time-dependency of $\REDIOP$. 
%take the form of estimates on the time derivative of the fraction of density derivatives (see Lemma \ref{LEMMA_DIFFERENCES}). Utilizing 
% the density equation requires a density regularization (see (\ref{COROLLARY_SLOPES_1_PF_4})).% $ii)$  
\item We prove a  $L^\infty_tH^1_x$ bound for the eddy induced advection velocity, given by by $\GMOP$
(see (\ref{PROOF_STRONG_SOLUTION_3})- (\ref{PROOF_STRONG_SOLUTION_3vert})).
%The nonlinear transport velocity, induced by the advective part of the eddy model $\GMOP$ requires a
% $L^\infty_tH^3_x$-bound for density . 
 %This is accomplished by the density regularization.
%For this reason we regularize via a convolution. A regularization via an inverse elliptic operator provides only the gain of two spatial derivatives.
\end{enumerate}
Each of these three results is crucial to our proof strategy %such as the application of the Aubin-Lions Compactness Lemma 
and for all three results we need to regularize the density within the eddy operator. The reason being the nonlinearity of the equation of state and the fact that density enters the eddy operators $\REDIOP, \GMOP$ in the form of derivatives. For more details we refer to Section \ref{SECT_Background} and to Remarks \ref{remark_three_derivatives} and \ref{remark_minimal_regularization}.
%Due to the presence of the eddy operators $\REDIOP, \GMOP$ we need also the density regularization to show the convergence of approximate solution to the strong solution.

The original (unregularized) eddy closure model of Gent-McWilliams-Redi is analytically out of reach due to the highly nonlinear and flow-dependent 
nature of the eddy closure that lead to potential singularities and to degeneracies near the boundaries. The computational ocean modelling community has responded to this challenge with adhoc regularizations to modify the behaviour near the boundary and to compute sufficiently smooth solution that agree with physical expectations (see e.g. \cite{Lemarie, MOM_DOCU}). The discretization of the eddy operators $\REDIOP, \GMOP$ is a subtle numerical problem, for details we refer to   
\cite{GRIFFIES_1, GRIFFIES_2, KORN_PARAM}. Guided by these considerations we introduce rigorously a physically and computationally motivated regularization that we then study analytically. This provides in turn a rigorous basis for computational practice in ocean modelling. 
%The interpretation of the ocean equations as filtered equations (cf. chapter 8 in \cite{GRIFFIES_BOOK}) is in line with our approach.

%%The density regularization is necessitated by the nonlinear combination of derivatives in $\REDIOP, \GMOP$. 
%, while on the discrete level, it tends to produce grid scale noise. Numerical ocean codes use \textcolor{red}{ad-hoc} smoothing procedures to prevent the noise and to compute smooth subgrid scale fluxes of density 
%that agree with the physical expectations  (see e.g. \cite{Lemarie, MOM_DOCU}). Our work is also intended to inform the construction of numerical schemes of the eddy parametrization.
%It is common practice  among ocean modellers to add some amount of ``''smoothing to the numerical implementation of the eddy parametrization to suppress the noise.     
%For the $L^\infty_tH^1_x$-bound on the ocean velocity the eddy parameterization does not imply any changes. Here we can build upon the work of 
%Cao-Titi \cite{CaoTiti}. 
%In part 1 of our work we show that weak solutions exist, provided one regularizes the density variable in the eddy operators. This regularization is demanded by the eddy advection operator $\GMOP$. Our results transfer to the commonly used {\it small slope approximation}.The uniqueness of weak solutions is open.\\ %The second part studies strong solutions. \\

{\bf Organization of the paper.}
Section \ref{SECT_Background} provides physical background on mesoscale eddies and the need for regularization. Section \ref{SECT_THERMPDYN} defines the thermodynamcial concept. In section \ref{SECT_PARAMETRIZATIONS} we introduces the regularized eddy operators.
%isoneutral slopes and the associated eddy operators. 
Section \ref{SECT_STRONG_SOLUTION} contains the main result, of this paper.  The proofs are given in Sections \ref{SECT_PROOF_MAIN_RESULT} and \ref{SUBSECT_PROOF_EX_STRONG}.
%%%%%%%%%%%%%%%%%%%%%%%%%%%%%%%%%%%%%%%%%%%%%%%%
\section{Physical and Modelling Background}\label{SECT_Background}
%%%%%%%%%%%%%%%%%%%%%%%%%%%%%%%%%%%%%%%%%%%%%%%%%
%\paragraph*{Eddy-Parametrization and Thermodynamics.} 
Mesoscale eddies have a typical horizontal size between 10-100 km and a lifetime between 10-100 days. They belong to the most energetic phenomena in the circulation of the world's ocean and are also referred to as ``weather" of the ocean due to their analogy with atmospheric weather systems. 
They are predominantly created by baroclinic instabilities on a scale that is determined by the Rossby deformation radius. Ocean eddies are much smaller than their atmospheric counterparts and global ocean general circulation models have to invest significant  computational resources to resolve parts of the mesoscale eddy spectrum. %This explains the need for parametrizing these eddies.
%Some words on eddy pyhsics 

The purpose of the eddy-induced diffusion is to mimic the behaviour of eddies to enhance mixing locally preferably along directions in which a water parcel can move in an adiabatic way without changing buoyancy (see e.g. \cite{Fox-Kemper_Bryan} for a review). The diffusion along the isoneutral direction is by orders of magnitude larger than the diffusion perpendicular to this directions \cite{Ledwell}. The parametrization of eddy-induced advection by Gent-McWilliams-Redi aims to capture the effect of mesoscale eddies on tracers by means of stirring through an adiabatic and potential energy reducing closure %that mimics the effect of a baroclinic instability 
(\cite{GENT_MCWILLIAMS}, \cite{GENT_WILLEBRAND}, \cite{VALLIS}). 
Eddies convert potential into kinetic energy, thereby flattening of neutral surfaces reduces potential energy, while preserving the mass between isoneutral layers. 

The eddy advection, originally formulated by Gent-McWilliams for coarse resolution ocean models in vertical density coordinates \cite{GENT_MCWILLIAMS}, was transformed to more frequently used depth-coordinates in \cite{GENT_WILLEBRAND} and combined with the eddy-induced diffusion of Redi \cite{REDI}. A common modelling framework of eddy-induced advection and diffusion was developed in \cite{GRIFFIES_1, GRIFFIES_2}, in this form it was used in numerous ocean modelling studies (see e.g. \cite{CORE, CORE_2}. The Gent-McWilliams-Redi parametrization removes essential model biases of coarse resolution models such as the Veronis effect (see, e.g. \cite{GENT_2020}). Among its effects is a poleward heat transport in better agreement with observations or a more realistic occurrence of convective regions (see, e.g., \cite{DanabasMcWilliams}, \cite{GENT_2020}).

The eddy parametrization approximates %the advection and diffusion of tracers generated by these eddies by modelling 
subgrid scale fluxes of density, %within the advection-diffusion equations. Buoyancy is determined by the equation-of-state. 
it is essential to use therefore an accurate equation of state. 
Unlike the case of atmospheric dynamics no sufficiently accurate  analytical expression of the equation of state %, such as the general gas law, 
is available for oceanic dynamics.
%dynamics where an analytical expression of the equation of state is given, for example by the general gas law, no such analytic expression for the equation of state is available for oceanic dynamics.
We use the equation of state TEOS-10 \cite{EOS10} of the Intergovernmental Oceanographic Commission, the official description of seawater properties in marine science. It provides an accurate description of seawater thermodynamics by blending the Gibbs formalism of thermodynamics with seawater measurements  (see, e.g. \cite{Feistel2003}). This implies the natural condition that tracers are bounded within an admissible range 
given by the equation of state. %\\
%\newpage 

{\bf Parametrizations and Need of Regularization.} 
%\paragraph*{Parametrizations and Regularization.} 
%We use the term ``parametrization'' and ``subgrid scale closure'' synonymously.
%, both aim to express processes that are not resolved by the model in terms of resolved processes. 
%Implicit assumptions
The purpose of the eddy parametrization %not to imitate an individual eddy but 
is to capture the mean effect of subgrid scale buoyancy fluxes onto the resolved large-scale circulation. 
This particular parametrization, as well as others,  expresses unresolved quantities in terms of resolved through contructing nonlinear functions of partial derivatives
of resolved quantities. The formulation of such a nonlinear function is not derived from first principle but guided by physical ad-hoc consideration, 
applied to a very specific individual physical process. In our case this is the process of baroclinic instability. The mathematical object constructed thereby is then supplemented by ``numerical controllers'' such as limiters and thresholds in order to make the eddy parametrization computable in a general model of ocean dynamics   such as the ocean primitive equations with all its complexities. The fundamental approach to approximate subgrid scale processes of density by nonlinear functions of partial density derivatives challenges the available regularity of strong solution. In essence, this accumulation of derivatives generates a nonlinearity that is stronger than the quadratic nonlinearity of the fluid dynamics part.

The following formal argument illustrates the necessity of a density regularization for the eddy parametrization. The typical building block of the  eddy operator is of the form $\partial_{x_i}(k_{ij}\partial_{x_i}\theta)$, where $k_{i,j}$ denotes a matrix element of the eddy operator.  For a strong solution $(v,\theta,S)$ 
the eddy operator satisfies $\partial_{x_i}(k_{ij}\partial_{x_j}\theta)\in L^2_t L^2_x$, or, equivalently $k_{ij}\partial_{x_i}\theta\in L^2_t H^1_x$.  We have 
$k_{ij}\sim \partial_{x_i}\rho$ ($\sim$ indicates the same regularity class) and $\partial_{x_i}\rho\sim \partial_{x_i}\theta$. Consequently, having  
$k_{ij}\partial_{x_i}\theta\in L^2_t H^1_x$ requires that the product satisfies $\partial_{x_i}\theta\partial_{x_j}\theta\in L^2_t H^1_x$ - a property  
that is beyond the regularity of strong solutions. We do not know if this is an artefact of our proof strategy or a generic phenomenon related to the parametrization. The mathematical challenges are described in Remarks \ref{remark_three_derivatives} and \ref{remark_minimal_regularization}. The computational experience of ocean circulation models supports the need of a regularization to provide noise-free solutions (see e.g. \cite{Lemarie, MOM_DOCU}). 

%From a modelling perspective, the parametrizations goal to model the ``mean effect'' of subgrid scales on large scale processes 
%complies with an averaging over small scales. %, and this can be described by means of a suitable mollification. 
A physical viewpoint suggests that the energy extraction and transfer from potential to kinetic energy, approximated by the eddy parametrization, is related to a stable and smooth background stratification. The regularity of such a background stratification can be described mathematically by filtering the instantaneous density field. Observations in the world ocean of the buoyancy frequency $N:=(-g\rho_0\partial_z\rho)^{1/2}$, which determines the denominator of the density slopes (cf. \ref{SLOPE_DEF_1}) reveal a field  that is ``extremely noisy" with ``high wave-number jitter'' to be removed to make it interpretable (citations from \cite{Wunsch}, see their Fig.2 ). It appears a physical fact that the buoyancy frequency $N$ and the density slopes are non-smooth physical quantities. The design of the Gent-McWilliams-Redi eddy closure makes an implicit smoothness assumption, that was justified practically by the very coarse resolution of the numerical 
ocean models to which it was applied initially, where the coarse mesh acts as a low-pass filter. In the PDE limit this filter disappears, the implicit assumption becomes apparent and we restore regularity via a regularization.     

%%%%%%%%%%%%%%%%%%%%%%%%%%%%%%%%%%%%%%%%%%%%%%%%%
%%%%%%%%%%%%%%%%%%%%%%%%%%%%%%%%%%%%%%%%%%%%%%%%
 \section{Ocean Thermodynamics}\label{SECT_THERMPDYN}
%%%%%%%%%%%%%%%%%%%%%%%%%%%%%%%%%%%%%%%%%%%%%%%% 
The oceanic equation of state TEOS-10  \cite{EOS10, NEMO} 
assumes that temperature, salinity and pressure values are {\it within} the observed range of the worlds ocean. 
%Outside this range the ocean density $\rho$ is not defined. 
%Thus, the equations of state imposes bounded tracers. 
%In the ocean the temperature varies between $\sim -2^\circ$ and  $40^\circ$, salinity between $1psu$ and $45psu$, the pressure
%up to $100 MPa$, and hence the resulting density varies little around $1000kg/m^3$. 
Accordingly we make the following (cf. \cite{KORN_NLEOS})\\
%\newpage
\noindent {\bf Hypothesis on Ocean Thermodynamics: } 
\begin{enumerate}
\item There exist intervals of {\it physically admissible ranges} for temperature $I_\theta:=[-K_1, K_2]$,  salinity $ I_\salgen:=[L_1,L_2]$, 
and pressure $I_p:=[M_1,M_2]$ with $K_1,K_2$, $L_1,L_2,M_1,M_2>0$, such that for
$(x,y,z,t)\in\Omega\times [0,T]$ %one has
\begin{equation}\begin{split}\label{GIBBS_RHO_ASSUMPTION1}
 \theta(x,y,z,t)\in I_\theta,\quad S(x,y,z,t)\in I_S,\quad p(x,y,z,t)\in I_p. 
\end{split}\end{equation}
%We refer to $I_\theta, I_S, I_p$ as {\it physically admissible ranges} for the respective quantity.
\item  If $\theta\in I_\theta, \salgen\in I_\salgen$, $p\in I_p$, then the coefficients $c_{i,j,k}$ of the equation of state, defined below in (\ref{GIBBS_SPECIFIC_rho}), are determined such that the density $\rho$ stays within the admissible range
\begin{equation}\begin{split}\label{GIBBS_RHO_ASSUMPTION2}
 \rho\in I_\rho:=[R_1,R_2], \quad R_1,R_2>0.
\end{split}\end{equation}
%\item The $c_{i,j,k}$ are determined such that the thermal expansion coefficient and the saline contraction coefficient functions
%in (\ref{GIBBS_SPECIFIC_alpha_beta}) satisfy $\alpha < 0$ and $\beta>0$.\todo{check where we need that} 
\end{enumerate}
%{\bf The Boussinesq Approximation.}
%The TEOS-10 equation of state describes the thermodynamics of seawater without any approximation while 
For the Boussinesq approximation one replaces the density $\rho$ by a reference density $\rho_0>0 $ in 
all terms of the dynamical equations except for the buoyancy term in the momentum balance and in the equation of state (see e.g. Section 2.4. in \cite{VALLIS}).
%We refer to the pressure in the equation of state as {\it static pressure} $p_{st}$.
%The question arises how to define $p_{st}$ in the equation-of state $\rho(\tcons,S,p_{st})$. 
%Using the hydrostatic pressure $p_{st}=p_h$, with $p_h$ from (\ref{PRESSURE_REFORM}), would make density dependent on the hydrostatic pressure $\rho=\rho(\tcons,S,p_h)$
%and the hydrostatic pressure $p_h=p_h(\rho)$ dependent on density and results in an implicit definition of the density through an nonlinear function of the form $\rho=\rho(\tcons,\salabs,\rho)$. The solution of this dilemma comes from the Boussinesq approximation. 
For energetic consistency one uses in the equation of state the {\it static pressure} 
% with respect to $\rho_0$, 
%i.e As a consequence the static pressure $p_{st}$ is a function of depth,
\begin{equation}\begin{split}\label{GIBBS_PRESSURE_BOUSSINESQ}
 p_{st}(z):=-g\rho_0z,\quad\text{for }z\leq 0.
\end{split}\end{equation}
%A different choice for $p_{st}$ would create an internal energy reservoir, contradicting that under the Boussinesq approximation no internal energy exists. For more details we refer to  Section 2.4.3. in \cite{VALLIS}, and \cite{Young}. We summarize our considerations in the following definition.
%%%%%%%%%%%%%%%%%%%%%%%%%%%%%%%%%%%%%%%%%%%%
%%%%%%%%%%%%%%%%%%%%%%%%%%%%%%%%%%%%%%%%
\begin{definition}\label{THERMODYNAMIC_DEF}
Let the hypothesis on ocean thermodynamics (\ref{GIBBS_RHO_ASSUMPTION1}) and (\ref{GIBBS_RHO_ASSUMPTION2}) hold.\\
$i)$ Define $\LinftyT:=\{\theta\in L^\infty(\Omega): \theta\in I_\theta\}$, $\LinftyS:=\{S\in L^\infty(\Omega): S\in I_S\}$.\\
%where $I_\theta, I_S$ denote the physically admissible range for potential temperature and salinity, introduced in (\ref{GIBBS_RHO_ASSUMPTION1}). 
%For $C\in\{\theta, S\}$ the space $\LinftyC$ denotes either $\LinftyT$ or $\LinftyS$.\\
%The product space is defined by $\LinftyTS:=\LinftyT\times\LinftyS$.\\
$ii)$ Let $\theta\in \LinftyT, S\in \LinftyS$. The density $\rho$ 
is given via the specific volume $\mathrm{v}$
\begin{equation}\begin{split}\label{GIBBS_SPECIFIC_rho}
&\rho(\tcons,\salabs,p_{st}):=\mathrm{v}(\tcons,\salabs,p_{st})^{-1},\quad
\text{with }\mathrm{v}(\tcons,\salabs,p_{st}):=\sum_{i,j,k=0}^{N_i,N_j,N_k}c_{i,j,k}\salabs^i \tcons^j p_{st}^k,
\end{split}\end{equation}
%with the static pressure $p_{st}$ given by (\ref{GIBBS_PRESSURE_BOUSSINESQ})  and
with given coefficients $c_{i,j,k}\in \mathbb{R}$ (cf table K1 in \cite{EOS10}), $N_i=7,N_j=6,N_k=5$.%\\
%$iii)$ 
%The thermal expansion coefficient $\alpha$ and the haline contraction coefficient $\beta$ are defined by
%\begin{equation}\begin{split}\label{GIBBS_SPECIFIC_alpha_beta}
%&\alpha(\theta, S, p_{st}):=\frac{1}{\rho(\theta, S, p_{st})}\frac{\partial\rho(\theta, S, p_{st})}{\partial\theta}%-\frac{a(\theta, S, p_{st})}{\rho(\theta, S, p_{st})},
%\ \text{ and }\
%\beta(\theta,S,p_{st}):=\frac{1}{\rho(\theta, S, p_{st})}\frac{\partial\rho(\theta, S, p_{st})}{\partial S}. %\frac{b(\theta, S, p_{st})}{\rho(\theta, S, p_{st})},\\
%%%\text{where }&\\\
%%%&a(\theta, S, p_{st}):=\frac{\partial\rho(\theta, S, p_{st})}{\partial\theta},\quad 
%%%b(\theta, S, p_{st}):=\frac{\partial\rho(\theta, S, p_{st})}{\partial S}.             
%\end{split}\end{equation}
\end{definition}
%%%%%%%%%%%%%%%%%%%%%%%%%%%%%%%%%%%%%%%%%%%%%%%%%%%%
%The functions $a$, $b$ are given by
%\begin{equation}\begin{split}\label{GIBBS_SPECIFIC_a_b}
%&a(\theta, S, p_{st})
%=\sum_{i,k=0, j=1}^{N_i,N_j,N_k}jc_{i,j,k}\salabs^i \tcons^{j-1} p_{st}^k,\ \text{ and }\
%b(\theta,S,p_{st})
%=\sum_{j,k=0, i=1}^{N_i,N_j,N_k}ic_{i,j,k}\salabs^{i-1} \tcons^j p_{st}^k.              
%\end{split}\end{equation}
%%%%%%%%%%%%%%%%%%%%%%%%%%%%%%%%%%%%%%%%%%%%%%%%%%%%
%\subsection{Properties of Thermodynamics}\label{SUBSECT_PROOF_THERMODYN}
%%%%%%%%%%%%%%%%%%%%%%%%%%%%%%%%%%%%%%%%%%%%%%%%
%\subsection{Properties of Thermodynamic Quantities}\label{SUBSECTION_TRACER_PROPS}
%%%%%%%%%%%%%%%%%%%%%%%%%%%%%%%%%%%%%
%The advection-diffusion equation for a tracer, including the ocean parametrizations reads as follows
%\begin{equation}\begin{split}\label{TRACER_PROP_FORM2}
%\partial_t C+ (v\cdot\nabla) C +w\partial_z C+\REDIOP(C)+\GMOP(C)=0,
%\end{split}\end{equation}
%with boundary conditions (\ref{PE_AO_BOUNDARY_CONDITIONS_TRAC2})
%and initial condition $C(t=0)=C_0\in\VC\cap \LinftyC$.
%The positive-definiteness of the isoneutral diffusion operator and the skew-diffusion property of the eddy operator % ((Lemma \ref{PROP_MIXING_TENSOR}) and  \ref{PROP_GM}) allow to 
%The maximum principle for ocean tracers shows that it is sufficient to impose boundedness conditions on initial conditions in order to guarantee the integrity of the equation of state for all time.
%the hypothesis on thermodynamics (\ref{GIBBS_RHO_ASSUMPTION1}), (\ref{GIBBS_RHO_ASSUMPTION2}) on initial conditions only.
%%%%%%%%%%%%%%%%%%%%%%%%
\begin{lemma}[Boundedness of Potential Temperature, Salinity and Density]\label{LEMMA_TRACER_BOUNDED}
Denote by $C\in\{\theta,S\}$ the solution of the tracer equation for temperature (\ref{PE_OCEAN_4}) or 
for salinity (\ref{PE_OCEAN_5}).\\
%, and by $I_C, I_\rho$ the physically admissible range of values for temperature, salinity or density  (see (\ref{GIBBS_RHO_ASSUMPTION1}), (\ref{GIBBS_RHO_ASSUMPTION2})).\\ %, with isoneutral slopes $\SL=\SL(\neutral)$ defined in (\ref{SLOPE_DEF_1}).
$i)$ Let initial conditions satisfy $C_0\in \VC\cap \LinftyC$. Then it holds %for almost every $t\in [0,{T}]$
%$(x,y,z,t)\in\Omega\times [0,{T}]$
\begin{equation}\label{LEMMA_TRACER_BOUNDED1}
% C(x,y,z,t)\in I_C, \ i.e.
  C(t)\in \LinftyC.
\end{equation}
$ii)$
Let  $\theta_0\in\VT\cap\LinftyT, S_0\in\VS\cap\LinftyS$ be initial conditions of the tracer equation for temperature (\ref{PE_OCEAN_4}) and for 
for salinity (\ref{PE_OCEAN_5}). 
%Let the density be given by (\ref{GIBBS_SPECIFIC_rho}) and the hypothesis (\ref{GIBBS_RHO_ASSUMPTION2}) be satisfied. 
Then it holds %for almost every $t\in [0,{T}]$%$(x,y,z,t)\in\Omega\times [0,{T}]$
\begin{equation}\label{PROPOSITION_BOUNDED_DENSITY_1}
\rho(t)\in \Linftyrho.
\end{equation}
\end{lemma}
%%%%%%%%%%%%%%%%%%%%%%%%
\begin{proof}
The proof under the conditions of Definition \ref{THERMODYNAMIC_DEF} and without eddy parametrization can be found in \cite{KORN_NLEOS}. It is extended to the case under consideration here by Lemma \ref{LEMMA_PROP_REDIOPERATOR} and the skew-symmetry of $\KGM$.
%Lemma \ref{PROP_GM}.
\end{proof}
%%%%%%%%%%%%%%%%%%%%%%%%

%The next Lemma is used to prove the uniqueness of solutions in Theorem \ref{THEOREM_EXISTENCE_STRONG_SOLUTIONS}. 
%%%%%%%%%%%%%%%%%%%%%%%%%%%%%%%%%%%%%%%%%%%%%%
\begin{lemma}[\cite{KORN_NLEOS}]\label{LEMMA_DENSITY_DIFF}
Let $(\tcons_1,\salabs_1)\in \LinftyTS$ and $(\tcons_2,\salabs_2)\in \LinftyTS$ be two pairs of temperature and salinity fields,  
denote by %$\mathrm{v}_1(\tcons_1,\salabs_1),\mathrm{v}_2(\tcons_2,\salabs_2)$ the two associated specific volumes and 
 $\rho_1(\tcons_1,\salabs_1)$, $\rho_2(\tcons_2,\salabs_2)$ the two associated densities. 
Then it holds almost everywhere in $\Omega$ 
\begin{equation}\begin{split}\label{LEMMA_DENSITY_DIFF1}
%&|\mathrm{v}_1-\mathrm{v}_2|\leq K(|\tcons_1-\tcons_2|+|\salabs_1-\salabs_2|),\\
%\text{and }\quad 
&|\rho_1-\rho_2|\leq K(|\tcons_1-\tcons_2|+|\salabs_1-\salabs_2|),
\end{split}\end{equation}
where $K=K(\theta_1,\theta_2,S_1,S_2,c_{i,j,k})$ is bounded. % is defined in (\ref{PROOF_NONLINEAR_EOS7}). \\
%$ii)$ Denote by $\alpha_1(\tcons_1,\salabs_1),\alpha_2(\tcons_2,\salabs_2)$ $\beta_1(\tcons_1,\salabs_1),\beta_2(\tcons_2,\salabs_2)$ the two associated thermal expansion and saline contraction coefficients, defined in (\ref{GIBBS_SPECIFIC_alpha_beta}). Then it holds almost everywhere in $\Omega$ 
%\begin{equation}\begin{split}\label{LEMMA_DENSITY_DIFF2}
%&|\alpha_1-\alpha_2|\leq K(|\tcons_1-\tcons_2|+|\salabs_1-\salabs_2|),\\
%\text{and }\quad &|\beta_1-\beta_2|\leq K(|\tcons_1-\tcons_2|+|\salabs_1-\salabs_2|),
%\end{split}\end{equation}
%with $K$ as in $i)$.
\end{lemma}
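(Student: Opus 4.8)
The plan is to reduce the claim to a pointwise estimate and to exploit that, for a fixed static pressure, the density is a smooth function of the two tracers that is Lipschitz on the compact admissible range. Since the two densities are to be compared at the same spatial point $(x,y,z)$, they share the same static pressure $p_{st}=p_{st}(z)$ from (\ref{GIBBS_PRESSURE_BOUSSINESQ}); thus I may freeze $p_{st}$ and work with the map $(\tcons,\salabs)\mapsto\rho(\tcons,\salabs,p_{st})=\mathrm{v}(\tcons,\salabs,p_{st})^{-1}$ from (\ref{GIBBS_SPECIFIC_rho}), treating all estimates as holding for almost every $(x,y,z)\in\Omega$ and collecting the pointwise bound at the end.

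First I would record the algebraic identity
\begin{equation*}
\rho_1-\rho_2=\frac{1}{\mathrm{v}_1}-\frac{1}{\mathrm{v}_2}=\frac{\mathrm{v}_2-\mathrm{v}_1}{\mathrm{v}_1\mathrm{v}_2},
\end{equation*}
where $\mathrm{v}_m:=\mathrm{v}(\tcons_m,\salabs_m,p_{st})$ for $m=1,2$. The hypothesis on ocean thermodynamics (\ref{GIBBS_RHO_ASSUMPTION2}) guarantees $\rho_m\in[R_1,R_2]$ with $R_1>0$, so that the specific volumes obey $\mathrm{v}_m=\rho_m^{-1}\in[R_2^{-1},R_1^{-1}]$; in particular the denominator is bounded below by $\mathrm{v}_1\mathrm{v}_2\ge R_2^{-2}>0$. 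This lower bound is the crucial point that renders the reciprocal harmless.

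Next I would bound the numerator. Because $\mathrm{v}(\cdot,\cdot,p_{st})$ is, by (\ref{GIBBS_SPECIFIC_rho}), a polynomial in $(\tcons,\salabs)$ whose coefficients are built from the $c_{i,j,k}$ and powers of the bounded $p_{st}$, its partial derivatives $\partial_\tcons\mathrm{v}$ and $\partial_\salabs\mathrm{v}$ are again polynomials, hence bounded on the compact admissible range $I_\theta\times I_S$. Applying the mean value inequality along the segment joining $(\tcons_1,\salabs_1)$ to $(\tcons_2,\salabs_2)$, which remains in the convex rectangle $I_\theta\times I_S$, yields
\begin{equation*}
|\mathrm{v}_2-\mathrm{v}_1|\le L\big(|\tcons_1-\tcons_2|+|\salabs_1-\salabs_2|\big),
\end{equation*}
with $L$ controlled by the supremum of $|\partial_\tcons\mathrm{v}|+|\partial_\salabs\mathrm{v}|$ over $I_\theta\times I_S\times I_p$, hence by the $c_{i,j,k}$ and the admissible ranges alone. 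Combining with the denominator bound gives the assertion (\ref{LEMMA_DENSITY_DIFF1}) with $K\le L\,R_2^2$, which depends on the tracer values only through uniformly bounded quantities.

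The estimate is elementary once the lower bound on $\mathrm{v}$ is secured, so the only genuine obstacle is justifying that the specific volume stays bounded away from zero. This is exactly what the thermodynamic hypothesis (\ref{GIBBS_RHO_ASSUMPTION2}) provides, while the prerequisite that $(\tcons_m,\salabs_m)$ lie in the admissible range $\LinftyTS$ along the solution is furnished by Lemma \ref{LEMMA_TRACER_BOUNDED}.
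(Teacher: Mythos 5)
Your proof is correct, and it supplies exactly the argument that the paper itself omits: the lemma is stated without proof and attributed to \cite{KORN_NLEOS}, and the standard proof behind that citation is precisely your reduction via the identity $\rho_1-\rho_2=(\mathrm{v}_2-\mathrm{v}_1)/(\mathrm{v}_1\mathrm{v}_2)$, the lower bound $\mathrm{v}_1\mathrm{v}_2\geq R_2^{-2}$ from the thermodynamic hypothesis (\ref{GIBBS_RHO_ASSUMPTION2}), and the mean value inequality for the polynomial specific volume (\ref{GIBBS_SPECIFIC_rho}) on the compact convex range $I_\theta\times I_S$ at the frozen static pressure $p_{st}(z)$. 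One negligible remark: the closing appeal to Lemma \ref{LEMMA_TRACER_BOUNDED} is not needed here, since membership of $(\tcons_m,\salabs_m)$ in $\LinftyTS$ is already a hypothesis of the lemma.
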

%%%%%%%%%%%%%%%%%%%%%%%%%%%%%%%%%%%%%%%%%%%%%%%%
\noindent {\bf Density Equation.} 
For the density an evolution equation can be derived by differentiating equation (\ref{GIBBS_SPECIFIC_rho}) with respect to time. This yields
\begin{equation}\begin{split}\label{density_eq}
\partial_t\rho&=-a\partial_t\theta +b\partial_t S=aF_\theta +b F_S\\
\text{with }a&:=-\frac{\partial\rho}{\partial\theta},\quad b:=\frac{\partial\rho}{\partial S}, \\
F_\theta&:=- (v\cdot\nabla) \theta 
-w\partial_z \theta
-\REDIOP(\rhobar)(\theta)-\GMOP(\rhobar)(\theta),\\
F_S&:=
- (v\cdot\nabla) S
-w\partial_z S
-\REDIOP(\rhobar)(S)-\GMOP(\rhobar)(S).
\end{split}\end{equation}
%%%%%%%%%%%%%%%%%%%%%%%%%%%%%%%%%%%%%%%%%%%%%%
%%%%%%%%%%%%%%%%%%%%%%%%%%%%%%%%%%%%%%%%%%%%%%%%
\section{Ocean Eddy Parametrization}\label{SECT_PARAMETRIZATIONS}
%%%%%%%%%%%%%%%%%%%%%%%%%%%%%%%%%%%%%%%%%%%%%%%%
We introduce a mathematical formalization of oceanic neutral physics in section \ref{SECTION_NEUTRAL_DENSITY}.
%We define the central notions of neutral slopes and introduce a necessary regularization in \ref{SECTION_NEUTRAL_DENSITY}. 
This forms the basis of the definition of the eddy parametrizations in \ref{SUBSECTION_ISONEUTRAL_DIFFUSION}. 
%the mathematical analysis in the forthcoming chapters.
%%%%%%%%%%%%%%%%%%%%%%%%%%%%%%%%%%%%%%%%%%%%%%%%
\subsection{Regularized Isoneutral Density Slopes}\label{SECTION_NEUTRAL_DENSITY}
\vskip-0.15cm
\begin{figure}[ht]
\begin{center}
\includegraphics*[width=0.42\textwidth]{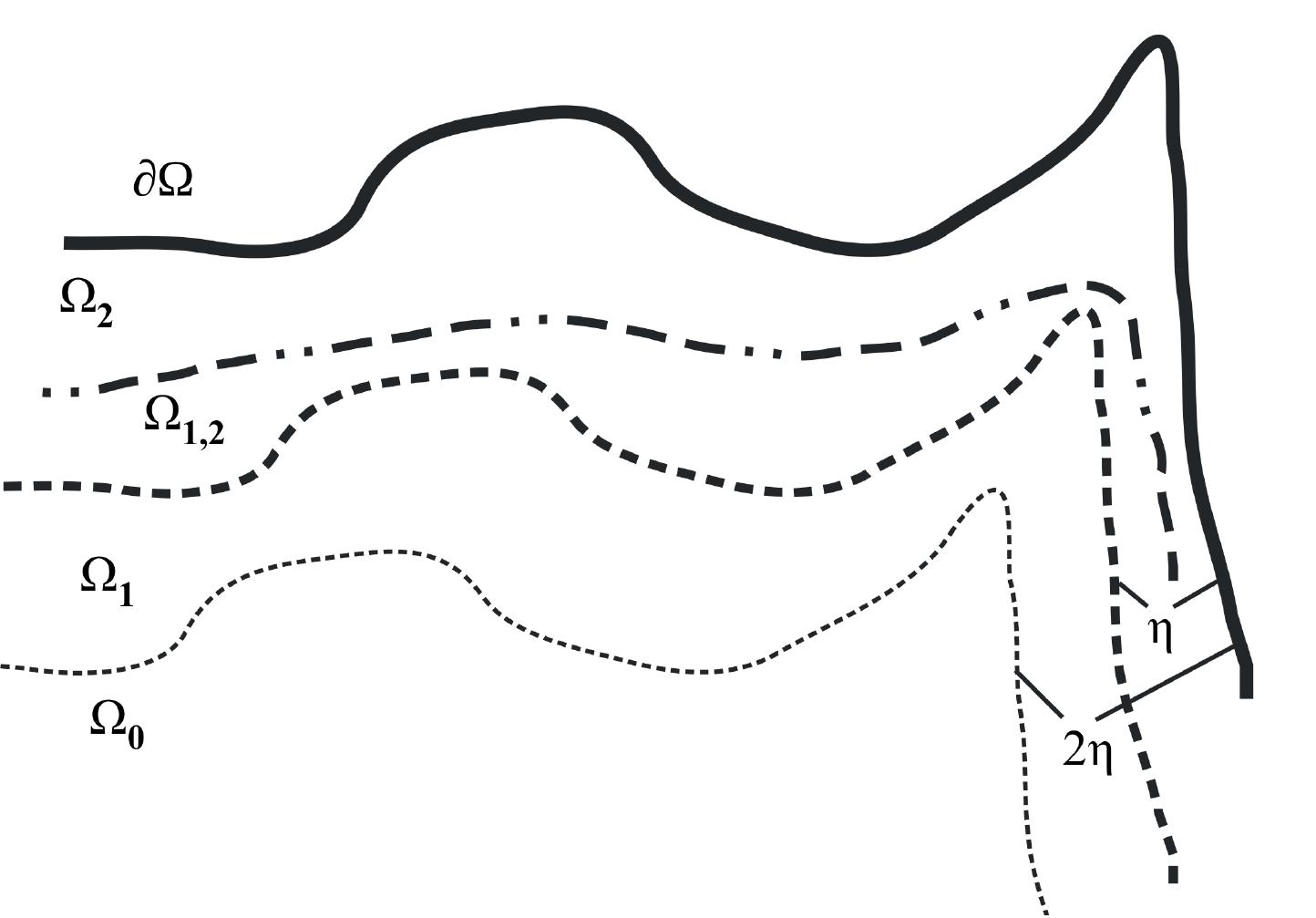}
\caption{\it\small Illustration of the domain near the boundary. In $\Omega_0$, at a distance of at least $2\eta$ away from the boundary the isoneutral density slope $\SL$ is not modified by the cut-off function $\taperbound$. At a distance less than $2\eta$ and more than 
$\eta$, in $\Omega_1$,  the density lope $\SL$ is reduced by the cut-off function $\taperbound$, at a distance less than 
$\eta$, in $\Omega_2$ both vanish due to the cut-off function. 
%In the domain $\Omegainthalf$ that contains $\Omegaint$ and has a smooth boundary the temperature and salinity are regularized by the Helmholtz operator. For details we refer to the text and to Definition \ref{REG_NEUTRAL}.
}
  \label{fig:SPIN_UP}
\end{center}
\end{figure}
%{\bf Ocean Interior.}
 The eddy parametrization acts in the interior of the ocean and away from the boundary and only if a minimal stratification exists. For this purpose we decompose the domain according to the distance to the boundary (see Fig. \ref{fig:SPIN_UP}) 
\begin{equation}\begin{split}\label{DOMAIN_DECOMP}
\Omega=\Omega_0\cup\Omega_1\cup\Omega_2,
\end{split}\end{equation}
with 
$\Omega_0:=\{(x,y,z)\in\Omega:\, dist((x,y,z),\partial\Omega) \geq 2\eta\}$, 
$\Omega_1:=\{(x,y,z)\in\Omega:\, \eta <dist((x,y,z),\partial\Omega) < 2\eta\}$
and $\Omega_2:=\{(x,y,z)\in\Omega:\, dist((x,y,z),\partial\Omega) \leq \eta\}$.
Furthermore we define the sets%the interior of the domain as
\begin{equation}\begin{split}\label{INTERIOR_DEF}
\Omegaint:=&\Omega_0\cup\Omega_1=\{(x,y,z)\in\Omega:\, dist((x,y,z),\partial\Omega) > \eta\},\\ %\pi_{\partial\Omega}(x,y,z)>0\},\\
\text{and }\ \Omegainthalf:=&\{ (x,y,z)\in\Omega:\, dist((x,y,z),\partial\Omega)> \frac{\eta}{2}\},
%dist((x,y,z),\partial\Omega)> \eta\}.
\end{split}\end{equation}
%Furthermore we define
%\begin{equation}\begin{split}\label{INTERIOR_DEF_HALF}
%\Omegainthalf:=\{ (x,y,z)\in\Omega:\, dist((x,y,z),\partial\Omega)> \frac{\eta}{2}\},
%\end{split}\end{equation} 
in such a way that the boundary $\partial \Omegainthalf$ is of class $C^2$. 
%The regularity of the boundary $\partial \Omegainthalf$ will be crucial in the context of strong solutions. %We note that $\Omegaint\subseteq \Omegainthalf$.
We use the ``cut-off'' function'' $\pi_{\partial\Omega}$ to define isoneutral density slopes in the interior ocean.
%To distinguish between the different regions we introduce near the boundary a ``cut-off'' function'' $\pi_{\partial\Omega}$.
Let $\xi:[0,\infty]\to [0,1]$ be a $C^\infty$ monotonic nondecreasing function %such that
\begin{equation}\begin{split}
\xi(s)=
\begin{cases}
&0,\quad\text{if}\quad 0\leq s < 1,\\
&1,\quad\text{if}\quad s\geq 2.
\end{cases}
\end{split}\end{equation}
Define for a given $\eta >0$ and $(x,y,z)\in\bar{\Omega}$ the cut-off function 
\begin{equation}\begin{split}\label{Cut-Off}
\taperbound(x,y,z):=\xi(\frac{dist((x,y,z),\partial\Omega)}{\eta}).
\end{split}\end{equation} 
%%%%%%%%%%%%%%%%%%%%%%%%%%%%%%%%%%%%%%%%%%%%%%
%A physical assumption underlying isoneutral diffusion and eddy advection is the existence of a minimal stratification.
%%This can be seen from the relation between the Buoyancy frequency $N^2=g(-\alpha\partial_z \theta +\beta\partial_z S)$ which is equal to the vertical component of the neutral direction. %$g(-\alpha\partial_z \theta +\beta\partial_z S)$. %,which is equal to the Buoyancy frequency . 
%%In regions where the ocean is well-mixed the buoyancy frequency becomes arbitrarily small and the magnitude of the isoneutral slope grows beyond any bound. 
The physical requirement of a minimal stratification is implemented by 
the tapering function $\Pi_{s_0,\epsilon_0}$ in Definition \ref{REG_NEUTRAL}.
%In order to implement the physical assumption of a minimal stratification, Definition \ref{REG_NEUTRAL} below introduces a tapering function 
%$\Pi_{s_0,\epsilon_0}$.% that smoothly reduces the slopes if the stratification becomes smaller than a specified threshold. 

%All foregoing considerations are summarized in the following definition.
%%%%%%%%%%%%%%%%%%%%%%%%%%%%%%%%%%%%%%%%%%%%%
\begin{definition}[Regularized Neutral Physics]\label{REG_NEUTRAL}
Suppose $\theta\in \Hone\cap\LinftyT, S\in \Hone\cap\LinftyS$.\\
$i)$ Let $j\in C^\infty(\Omega)$ be a regularizing kernel with $j > 0$ on $\Omega$, $\int_\Omega j\, dx=1$, %compact support, %$supp\, J\subseteq B_1(0)$ and set
$j_\eta(x):=\eta^{-3} j(\frac{2x}{\eta})$. Define for  density $\rho\in \Linftyrho$ the {\bf regularized density $\rhoreg$} by
\begin{equation}\begin{split}\label{REG_NEUTRAL_TS}
\rhobar(x,y,z):=%\mathcal{J}^{(x)}_{\eta}(\rho)(x,y,z):=
\begin{cases}
& \int_\Omega  j_\eta (x-\tilde{x},y-\tilde{y},z-\tilde{z})\rho(\tilde{x},\tilde{y},\tilde{z})\, d\tilde{x}d\tilde{y}d\tilde{z},\ \text{ for } 
(x,y,z)\in\Omegainthalf,\\
& 0,\quad \text{ else}.
%\rho\ast j_\eta(x,y,z),
%\quad (x,y,z)\in\Omegainthalf,
%\int_\Omega \rho j_\eta (x-y
\end{cases}
\end{split}\end{equation}
%on $\Omega\setminus\Omegainthalf$ we define $\rhoreg:=0$.\\
\noindent $ii)$ We define the {\bf isoneutral density slope vector} $\SL:=\SL(\rhobar):=(\SLx(\rhobar), \SLy(\rhobar))$ {\it with respect to the density} 
$\rhobar$ by 
\begin{equation}\begin{split}\label{SLOPE_DEF_1}
\SL(x,y,z,t):=\SL(\rhobar)(x,y,z,t):=
\begin{cases}
&-\frac{\nabla_h\rhobar(x,y,z,t)}{\partial_z\rhobar(x,y,z,t)}\pi_{\partial\Omega}\Pi_{s_0,\epsilon_0}\big(|\partial_z\rhobar(x,y,z,t)|\big),\\  
&\text{for  }(x,y,z)\in\Omega\setminus N(\partial_z\rhobar),\\
&0\ \text{else, }
\end{cases}               
\end{split}\end{equation}
with $N(\partial_z\rhobar):=\{(x,y,z)\in\Omega: |\partial_z\rhobar(x,y,z)|\leq s_0-\epsilon_0\}$, $s_0>\epsilon_0>0$, % is the null set of $\neutralz$,
and $\Pi_{s_0,\epsilon_0}:[0,\infty)\to [0,1]$ a monotonic non-decreasing $C^\infty$-function that 
satisfies %is symmetric to the origin and has the following properties: 
\begin{equation}\begin{split}\label{SLOPE_DEF_1_TAPERING}
\Pi_{s_0,\epsilon_0}(x)=
\begin{cases}
&1,\quad\text{for}\quad x\geq s_0,\\
&0,\quad\text{for}\quad 0\leq x\leq s_0-\epsilon_0.
%&\text{ increases monotonically from zero to one for}\ s_0-\epsilon_0\leq |x| < s_0
\end{cases}
\end{split}\end{equation}
The numbers $s_0, \epsilon_0$ denote the minimal required stratification and width of the transition zone in which we 
reduce the density slope towards zero if the stratification is locally less than $s_0$. \\
%$iv)$ Let $\tau>0$ be given. We define the {\bf time-averaged  isoneutral slope vector} $\SL:=\SL(\neutral):=(\SLx(\neutral), \SLy(\neutral))$ by
%\begin{equation}\begin{split}\label{SLOPE_DEF_2}
%\SL(x,y,z,t):=\SL(\neutral)(x,y,z,t):=
%\begin{cases}
%&\frac{1}{\tau}\int_{t-\tau}^t\SL(x,y,z,s)\, ds,\quad\text{if }\ t>\tau,\\
%&\frac{1}{t}\int_{0}^t\SL(x,y,z,s)\, ds,\quad\text{if }\ 0<t<\tau.
%\end{cases} 
%\end{split}\end{equation}
\end{definition}
%%%%%%%%%%%%%%%%%%%%%%%%%%%%%%%%

%%%%%%%%%%%%%%%%%%%%%%%%%%%%%%%%%%%%%%%%%%%%%%%%
\subsection{Eddy-Induced Diffusion and Advection of Tracers}\label{SUBSECTION_ISONEUTRAL_DIFFUSION}
%%%%%%%%%%%%%%%%%%%%%%%%%%%%%%%%%%%%%%%%%%%%%%%%
In this section we introduce the eddy operators $\REDIOP$ and $\GMOP$ that define eddy-induced diffusion and advection.
%%Physical picture
%%Mathematical realization
%The purpose of the eddy-induced diffusion is to mimic the behaviour of eddies 
%to enhance mixing locally preferably along directions in which a water parcel can move in an adiabatic way without changing buoyancy. The diffusion along the isoneutral direction is by orders of magnitude larger than the diffusion perpendicular to this
%directions, the dianeutral direction \cite{Ledwell}. 
%%%%%%%%%%%%%%%%%%%%%%%%%%%%%%%%%%%%%%%%%%%%%%
%\noindent The  isoneutral slopes form the basis of the associated mixing tensor and operator.
%%%%%%%%%%%%%%%%%%%%%%%%%%%%%%%%%%%%%%%%%%%%%%
\begin{definition}[Eddy Operators]\label{REDI_DEF}
Let $\theta\in \Hone\cap\LinftyT, S\in \Hone\cap\LinftyS$ be admissible temperature and salinity fields
with associated isoneutral density slope vector $\SL=\SL (\rhobar)=(\SLx (\rhobar), \SLy (\rhobar))$, defined in (\ref{SLOPE_DEF_1}).\\
$i)$ The {\it isoneutral diffusion operator} $\REDIOP(\rhobar)$ is for $C\in\{\theta, S\}$ defined by
%\begin{equation}\begin{split}\label{REDIOP}
\begin{align}
&\REDIOP(\rhobar) (C):=-\nabla_3\cdot\big(\KISO(\rhobar)\nabla_3 C\big),\label{REDIOP}\\
%\end{split}\end{equation}
%with
%\begin{equation}\begin{split}\label{REDI_TENSOR_FULL}
\text{with }\ &\KISO(\rhobar)
:= \frac{K_I}{1+\SL^2}
\begin{pmatrix}
1+\delta\SLx^2+\SLy^2& (\delta-1)\SLx \SLy&(1-\delta)\SLx\\
(\delta-1)\SLx\SLy& 1+\SLx^2+\delta\SLy^2&(1-\delta)\SLy\\
(1-\delta)\SLx&(1-\delta)\SLy&\delta+\SL^2
\end{pmatrix}, \label{REDI_TENSOR_FULL}
\end{align}
%\end{split}\end{equation}
where $\delta:=\frac{K_D}{K_I}<<1$ is the fraction of isoneutral and dianeutral mixing coefficients $K_D, K_I>0$ and 
$\SL^2:=\SL\cdot\SL$ .\\
$ii)$ The operator of {\it eddy-induced advection} $\GMOP$
 is for $C\in\{\theta, S\}$ defined by
%\begin{equation}\begin{split}\label{GM_DEF_1}
\begin{align}
&\GMOP(\rhobar)(C):=\nabla_3\cdot \big(\KGM(\rhobar)\nabla_3 C\big)\label{GM_DEF_1}\\
%\end{split}\end{equation}
\text{with }\ 
%\begin{equation}\begin{split}\label{GM_DEF_2}
&\KGM(\rhobar) 
:= \kappa
\begin{pmatrix}
0& 0&-\SLx\\
0& 0&-\SLy\\
\SLx& \SLy&0
\end{pmatrix}\quad \kappa>0.\label{GM_DEF_2}
%\end{split}\end{equation}
\end{align}
The {\it eddy-induced transport velocity} is defined as
\begin{equation}\begin{split}\label{GM_BOLUS}
\vbolus:= -\partial_z(\kappa \SL),\text{ and } \wbolus:= \nabla\cdot(\kappa \SL)\ \text{on } \Omegaint.
%\text{and }&\vbolus=0=\wbolus\text{on } \partial\Omega.
\end{split}\end{equation}
\end{definition}
%%%%%%%%%%%%%%%%%%%%%%%%%%%%%
\begin{remark}
If the density slope $\SL(x,y,z,t)$ is reduced to zero then the isoneutral diffusion operator $\REDIOP$ locally reduces to an anisotropic Laplacian operator with horizontal diffusivity $K_I>0$, vertical diffusivity $K_D>0$, the eddy advection operator $\GMOP$ vanishes.  
\end{remark}
%%%%%%%%%%%%%%%%%%%%%%%%%%%%%%%%%%%%%%%%%%%%%%
\begin{remark}[Flux Balance]
The purely isoneutral flux $\mathcal{F}_{iso}(C):=\KISO(\rhobar)\nabla_3 C$ with $K_D=0$,
maintains the balance $\alpha\mathcal{F}_{iso}(C)(\theta)=\beta\mathcal{F}_{iso}(S)$. This balance avoids
spurious buoyancy fluxes. Preserving this balance on the discrete  level is essential for discretizing the isoneutral operator in a physically consistent manner
 and prevents a numerical instability  (cf. \cite{KORN_PARAM}, \cite{GRIFFIES_1}). 
The calculation of the isoneutral flux in Definition \ref{REG_NEUTRAL} preserve this flux balance. 
%, i.e . $\alphareg\mathcal{F}_{iso}(\thetareg)=\betareg\mathcal{F}_{iso}(\Sreg)$, where 
%$\alphareg, \betareg$ denote the thermodynamics coefficient functions, defined in (\ref{GIBBS_SPECIFIC_alpha_beta}), computed with the regularized tracer fields.
%%\todo{add expl. for $\alphareg,\betareg$ }
\end{remark} 
%%%%%%%%%%%%%%%%%%%%%%%%
\begin{remark}[EddyAdvection]
The advective part of the eddy parametrization approximates the additional eddy transport of oceanic tracers in terms of the {\it ``eddy-induced transport velocity''} 
$v_3^\circ=(v^\circ,w^\circ)$ such that 
%\begin{equation}\begin{split}\label{GM_0}
$\overline{v'C'}\sim v_3^\circ C$,
%\end{split}\end{equation} 
where the overbar denotes the mean and the primed variables the fluctuations. % with respect to the mean. 
The eddy-induced velocity $v_3^\circ$
is determined from a stream function $\Psi=(\Psi^x,\Psi^y,0)$ by
% which guarantees that the eddy-induced transport velocity is divergence-free%, and the stream function depends linearly on the slope
%\begin{equation}\begin{split}\label{GM_01}
$v_3^\circ=curl\Psi$ and $\Psi:=\vec{z}\times \kappa\SL$.
%\end{split}\end{equation} 
%%%%%%%%%%%%%%%%%%%%%%%%%%%%
A direct calculation shows that 
%\begin{equation*}\begin{split}%\label{GM_BOLUS_SKEW}
$v^\circ\cdot\nabla_3 C=-\nabla_3\cdot(\KGM\nabla_3 C)$.
% \end{split}\end{equation*}
%The flux form on the right-hand side of (\ref{GM_BOLUS_SKEW}) has the advantage that eddy parametrization and isoneutral diffusion can be combined into a single operator by simply adding the coefficients $\kappa$ and $K_I$. This allows a computational efficient numerical realization of the ocean parametrizations \cite{GRIFFIES_1}, \cite{GRIFFIES_2}, \cite{KORN_PARAM}.
\end{remark}
%\todo{Reminder: Eventually add remark on dispersion and regularity in time for GM}
%%%%%%%%%%%%%%%%%%%%%%%%%%%%%%%%%%%%%%%%%%%%%%%%%
%\newpage
\subsection{Small Density Slope Approximation}\label{Subsect_SMALL_SLOPE}
In simulations of the real ocean, numerical circulation models use for efficiency reasons the ``small slope approximation'' of isoneutral diffusion (see \cite{GENT_MCWILLIAMS})
\begin{equation}\begin{split}\label{REDI_TENSOR_SMALL}
\KISOSMALL
:= K_I
\begin{pmatrix}
1 & 0 &\SLxsmall\\
0 & 1& \SLysmall\\
\SLxsmall& \SLysmall& \delta+|\SLsmall |^2
\end{pmatrix},\quad
\KGMsmall
:= \kappa
\begin{pmatrix}
0& 0&-\SLxsmall\\
0& 0&-\SLysmall\\
\SLxsmall& \SLysmall&0
\end{pmatrix},
\end{split}\end{equation}
where for a chosen parameter $r<<1$ the ``small density slope'' is defined by
\begin{equation}\begin{split}\label{REDI_TENSOR_SMALL_SLOPE}
\SLsmall:=
\begin{cases}
&\SL\quad\text {if  }|\SL|< r,\\
&0\quad\text{else}.
\end{cases}
 \end{split}\end{equation}
This approximation is justified by the observation that in the world ocean 
horizontal density gradients are generally smaller than vertical density gradients, therefore the off-diagonal terms in (\ref{REDI_TENSOR_FULL}) can be neglected (see \cite{GRIFFIES_BOOK}, Chapter. 14).% for an accuracy estimate). 

We define the eddy operators in the small slope approximation accordingly as
\begin{equation}\begin{split}\label{EDDY_OPERATORS_SMALL}
\REDIOPSMALL(\rhobar) (C):=\nabla_3\cdot(\KISOSMALL\nabla_3 C),\quad \GMOPSMALL(\rhobar)(C):=\nabla_3\cdot(\KGMsmall\nabla_3 C).
 \end{split}\end{equation}

\subsection{Properties of the Eddy Parametrization}\label{SUBSECT_PROOF_EDDY_PARAM}
%%%%%%%%%%%%%%%%%%%%%%%%%%%%%%%%%%%%%%%%%%%%%%%%

%%%%%%%%%%%%%%%%%%%%%%%%%%%%%%%%%%%
\begin{lemma}\label{LEMMA_DIFFERENCES}
Let $\theta\in L^2_{loc}([0,T],\VT\cap\LinftyT), S\in\L^2_{loc}([0,T],\VS\cap\LinftyS)$ and 
let $\SL$ be the isoneutral density slope, defined in (\ref{SLOPE_DEF_1}).\\% be given.\\
$i)$ The density slope satisfies $\SL\in L^\infty([0,T], H^2(\Omega))$, and
\begin{equation}\begin{split}\label{LEMMA_SLOPE_BOUND}
||\SL||_{H^2(\Omegaint)}\leq ||\rho||^2_\Lsq.
\end{split}\end{equation}
$ii)$  Let $(v,\theta,S)$ be a strong solution of (\ref{STRONG_PE_OCEAN}) (cf. Def. \ref{DEF_STRONGSOLUTION} $i)$. 
Then %the time derivative of the isoneutral slope satisfies 
$\partial_t\SL\in L^\infty([0,T], L^2(\Omega))$.\\ 
%and
%a continuous function $K_L(t)$ exists such that
% \begin{equation}\begin{split}\label{Lemma_TimeDeriv}
%&|| \partial_t\SL(t)||_{L^2(\Omegaint)}\leq K_L(t), \quad t\in [0,T].
% \end{split}\end{equation}
 $iii)$ Let two pairs of temperature and salinity fields $(\theta_1,S_1)$, $(\theta_2,S_2)\in L^2_{loc}([0,T],\VT\cap\LinftyT) \times 
L^2_{loc}([0,T],\VS\cap\LinftyS)$ be given. %, denote the associated regularized densities by $\rhobar_1, \rhobar_2$, respectively. 
For two density slopes $\SL(\rhobar_1), \SL(\rhobar_2)$ it holds
%\begin{equation}\begin{split}\label{LEMMA_DIFFERENCES_2}
\begin{align}
&||\SL(\rhobar_1)(t)-\SL(\rhobar_2)(t)||_{L^\infty(\Omega)}%(\Omegaint)}
\leq 
c\tau(\frac{c}{s_0}+\frac{C}{s_0^2})|I_\rho|||\rho_1(t)-\rho_2(t)||_{\Lsq},
% (\frac{c}{s_0}+\frac{C}{s_0^2})|I_\rho|\, ||\rho_1-\rho_2||_{L^2([0,T], \Lsq)},
%\frac{c}{s_0^2}K_{\neutral_1} ||\neutral_{1}-\neutral_{2}||_{L^\infty(\Omega)},\label{LEMMA_DIFFERENCES_2_inf}.%\\
%\text{and}\quad &
% ||\SL_1-\SL_2||_{\Htwoint}
%\leq K_{\Delta\SL}||\neutral_1-\neutral_2||_{\Htwoint} ,\\
%\text{where }&K_{\Delta\SL}:=
%c\big(
%\frac{1}{s_0^2}+\frac{c}{s_0^3}(K_{\neutral_1}+K_{\neutral_2})
%+ \frac{c}{s_0^4} K_{\neutral_1}K_{\neutral_2}\big)(K_{\neutral_2}^2 + K_{\neutral_2}).\label{LEMMA_DIFFERENCES_2_H2}
\end{align}
%where $I_\rho$ denotes the physical range of the ocean density (cf. (\ref{GIBBS_RHO_ASSUMPTION2})).\\
\end{lemma}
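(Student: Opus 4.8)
\emph{Plan.} The whole lemma rests on two structural facts that I would isolate at the outset. First, the regularized density is a mollification $\rhobar = j_\eta * \rho$ on $\Omegainthalf$, so it is $C^\infty$ there and every spatial derivative obeys a Young-type estimate $\|\partial^\alpha\rhobar\|_{L^\infty(\Omegaint)}\le C(\eta,\alpha)\|\rho\|_{\Lsq}$; since Lemma \ref{LEMMA_TRACER_BOUNDED} gives $\rho\in\Linftyrho$, the density and all its mollified derivatives are in fact uniformly bounded. Second, on the support of the tapering the denominator is bounded below: wherever $\Pi_{s_0,\epsilon_0}(|\partial_z\rhobar|)\neq 0$ one has $|\partial_z\rhobar|>s_0-\epsilon_0$. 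Consequently the scalar map $G(q):=\Pi_{s_0,\epsilon_0}(|q|)/q$ (extended by $0$ for $|q|\le s_0-\epsilon_0$) is a $C^\infty$ function with all derivatives bounded by powers of $1/s_0$, and I would write the slope globally as $\SL=-\,\taperbound\,\nabla_h\rhobar\,G(\partial_z\rhobar)$. This single smooth representation, valid across $\partial N(\partial_z\rhobar)$ because $\Pi_{s_0,\epsilon_0}$ vanishes to infinite order there, removes the need to track the piecewise definition (\ref{SLOPE_DEF_1}).

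For part $i)$ I would simply differentiate this representation twice. By the chain and product rules $D^2\SL$ is a finite sum of terms, each a product of derivatives of $\rhobar$ of total order at most three with a bounded derivative of $G$ and of $\taperbound$. Estimating every $\rhobar$-factor in $L^\infty(\Omegaint)$ by $C(\eta)\|\rho\|_{\Lsq}$ and using $\rho\in\Linftyrho$ to collapse the higher powers to the leading one, I obtain $\|\SL\|_{H^2(\Omegaint)}\le C\|\rho\|^2_{\Lsq}$, which is (\ref{LEMMA_SLOPE_BOUND}); boundedness in $t$ is inherited from $\rho\in L^\infty([0,T],\Linftyrho)$.

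Part $ii)$ is, I expect, the crux. Here $\partial_t\SL$ is, via $G$, a bounded combination of $\partial_t\nabla_h\rhobar$ and $\partial_t\partial_z\rhobar$, and $\partial_t\partial^\alpha\rhobar=(\partial^\alpha j_\eta)*\partial_t\rho$ for $|\alpha|\le 1$. The difficulty is that $\partial_t\rho$ carries only very weak spatial regularity. I would resolve this by the mollification trick: from (\ref{density_eq}) and the tracer equations, $\partial_t\rho$ is a bounded-coefficient combination of $F_\theta,F_S$, each a full divergence $F_C=\nabla_3\cdot\mathcal{G}_C$ with $\mathcal{G}_C:=-v_3 C+\KISO(\rhobar)\nabla_3 C-\KGM(\rhobar)\nabla_3 C\in L^\infty([0,T],\Lsq)$ (using that $\KISO,\KGM$ are bounded by part $i)$ and that $v_3C\in\Lsq$). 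Writing $aF_C=\nabla_3\cdot(a\mathcal{G}_C)-(\nabla_3 a)\cdot\mathcal{G}_C$ and shifting the kernel derivatives onto the smooth $j_\eta$, Young's inequality yields $\|(\partial^\alpha j_\eta)*(aF_C)\|_{L^\infty_x}\le C(\eta)\big(\|a\mathcal{G}_C\|_{\Lsq}+\|\nabla_3 a\|_{\Lsq}\|\mathcal{G}_C\|_{\Lsq}\big)$, bounded uniformly in $t$ by the strong-solution norms. Hence $\partial_t\nabla_3\rhobar\in L^\infty([0,T],L^\infty(\Omegaint))$, and since $\SL$ is supported in the bounded set $\Omegaint$, $\partial_t\SL\in L^\infty([0,T],\Lsq)$.

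For part $iii)$ I would again use $\SL(\rhobar_i)=-\taperbound\,\nabla_h\rhobar_i\,G(\partial_z\rhobar_i)$ together with the fact that $(\mathbf p,q)\mapsto-\mathbf p\,G(q)$ is globally Lipschitz on the admissible range: its $\mathbf p$-derivative is $O(1/s_0)$ and its $q$-derivative is $O(|\nabla_h\rhobar|/s_0^2)$, where $|\nabla_h\rhobar|=|(\nabla_h j_\eta)*(\rho-\text{const})|\lesssim|I_\rho|$ by the zero mean of $\nabla_h j_\eta$ and $\rho\in I_\rho$. This produces precisely the prefactor $\big(\tfrac{c}{s_0}+\tfrac{C}{s_0^2}\big)|I_\rho|$. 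Finally $|\nabla_3\rhobar_1-\nabla_3\rhobar_2|=|(\nabla_3 j_\eta)*(\rho_1-\rho_2)|\le\|\nabla_3 j_\eta\|_{\Lsq}\|\rho_1-\rho_2\|_{\Lsq}$ by Cauchy--Schwarz, with $\|\nabla_3 j_\eta\|_{\Lsq}$ in the role of $c\tau$; composing the Lipschitz bound with this estimate gives the claimed inequality. The only delicate point is the one the unified representation disposes of, namely that the smoothness of $G$ across $\partial N$ makes a separate treatment of the two cases in (\ref{SLOPE_DEF_1}) for $\rhobar_1$ and $\rhobar_2$ unnecessary.
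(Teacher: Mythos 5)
Your proposal is correct and follows essentially the same route as the paper: mollification estimates of Young/Cauchy--Schwarz type that place all derivatives on the kernel and bound each factor by $\|\rho\|_{\Lsq}$, the lower bound $|\partial_z\rhobar|>s_0-\epsilon_0$ on the support of the taper (your unified representation $\SL=-\taperbound\,\nabla_h\rhobar\,G(\partial_z\rhobar)$ is a cleaner packaging of the paper's decomposition $\Omegaint=I_1\cup I_2\cup I_3$, where the $I_3$-integral vanishes), the density evolution equation (\ref{density_eq}) with the divergence shifted onto the mollifier for part $ii)$, and the quotient/Lipschitz estimate with prefactor $\big(\tfrac{c}{s_0}+\tfrac{C}{s_0^2}\big)|I_\rho|$ for part $iii)$. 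If anything, your explicit product-rule step $aF_C=\nabla_3\cdot(a\,\mathcal{G}_C)-(\nabla_3 a)\cdot\mathcal{G}_C$, with $\nabla_3 a\in L^\infty_t L^2_x$ from the strong-solution regularity, justifies a derivative interchange that the paper performs tacitly in (\ref{COROLLARY_SLOPES_1_PF_4a}).
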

%%%%%%%%%%%%%%%%%%%%%%%%%%%%%%%%%%%%%%%%%%%%%%%
\begin{proof}
%%%%%%%%%%%%%%%%%%%%%%%%%%%%%%%%%%%%%
$Ad\, i)$ We decompose the domain %$\Omegaint$ in the following three sets 
 %\begin{equation}\begin{split}\label{COROLLARY_SLOPES_0_PF}
 $\Omegaint= I_1\cup I_2\cup I_3$,
 %\end{split}\end{equation}
 where $I_1:=\{(x,y,z)\in\Omegaint:\, |\partial_z\rhobar(x,y,z)|\geq s_0 \}$, 
 $I_2:=\{(x,y,z)\in\Omegaint:\, s_0-\epsilon\leq |\partial_z\rhobar(x,y,z)|\leq s_0 \}$,
$I_3:=\{(x,y,z)\in\Omegaint:\, 0\leq |\partial_z\rhobar(x,y,z)|< s_0-\epsilon \}$. These sets are measurable, since $\partial_z\rhobar$ is continuous. Using this decomposition we find with $l\in \mathbb{N}^3$,
\begin{equation}\begin{split}\label{COROLLARY_SLOPES_1_PF}
||\SL||_{H^2(\Omegaint)}^2
%=\sum_{|l|\leq s}\int_\Omegaint |D^l \SL|^2\, dxdydz\\
%&=\sum_{|s|\leq 2}\big\{\int_{I_1} |D^s \SL|^2\, dxdydz
%+\int_{I_2} |D^s \SL|^2\, dxdydz+\int_{I_3} |D^s \SL|^2\, dxdydz\big\}\\
&=\sum_{|l|\leq 2}%\bigg\{
\int_{I_1} |D^l  \big(\frac{\nabla_h\rhobar }{\partial_z\rhobar} \big)|^2\, dxdydz
+\int_{I_2} |D^l  \big(\frac{\nabla_h\rhobar}{\partial_z\rhobar}\Pi_{s_0,\epsilon_0}(|\partial_z\rhobar|)\big)|^2\, dxdydz,%\bigg\},
\end{split}\end{equation}
where the integral over $I_3$ vanishes due to the tapering by $\Pi_{s_0,\epsilon_0}$. 
For the first integral %over $I_1$  
we obtain with the Leibniz rule, the commutation of convolution and differentiation for $k\in\mathbb{N}^3$
\begin{equation}\begin{split}\label{COROLLARY_SLOPES_1_PF_1}
&\int_{I_1} |D^l\SL|^2\, dxdydz
=\sum_{|k|\leq l}\int_{I_1} |D^k (\nabla_h\rhobar ) D^{l-k}( (\partial_z\rhobar)^{-1})|^2\, dxdydz\\
&\leq c\sum_{|k|\leq l}\int_{I_1} |(\rho\ast D^{k+1}j_\eta)( \rho\ast D^{l-k+1}j_\eta)|^2\, dxdydz\leq c||\rho||_\Lsq^4.
%\int_{I_1} | \nabla_h\rhobar D^2( (\partial_z\rhobar)^{-1})|^2
%+ |D (\nabla_h\rhobar ) D((\partial_z\rhobar)^{-1})|^2 
%+|D^2(\nabla_h\rhobar ) ((\partial_z\rhobar)^{-1})|^2\, dxdydz.
\end{split}\end{equation}%where $k\in\mathbb{N}^3$ denotes a multi-index. 
An analogous estimate follows for the second integral in (\ref{COROLLARY_SLOPES_1_PF}), with an upper bound $c>0$ that depends in addition on $\Pi_{s_0,\epsilon_0}$.\\
%%%%%%%%%%%%%%%%%%%%%
$Ad\, ii)$ For the $L^2$-norm of the time derivative follows with $l\in \mathbb{N}^3$, %with the Leibnitz rule
and by using the same decomposition of $\Omegaint= I_1\cup I_2\cup I_3$ as in part $i)$
\begin{equation}\begin{split}\label{COROLLARY_SLOPES_1_PF_2}
||\partial_t\SL||_{L^2(\Omegaint)}^2
%=\sum_{|l|\leq s}\int_\Omegaint |D^s \SL|^2\, dxdydz\\
%&=\sum_{|s|\leq 2}\{\int_{I_1} |D^s \SL|^2\, dxdydz
%+\int_{I_2} |D^s \SL|^2\, dxdydz+\int_{I_3} |D^s \SL|^2\, dxdydz\\
&=
\int_{I_1} |\partial_t  \big(\frac{\nabla_h\rhobar }{\partial_z\rhobar} \big)|^2\, dxdydz
%&+\sum_{|l|\leq s}
+\int_{I_2} |\partial_t  \big(\frac{\nabla_h\rhobar}{\partial_z\rhobar}\Pi_{s_0,\epsilon_0}(|\partial_z\rhobar|)\big)|^2\, dxdydz.
\end{split}\end{equation}
%where the integral over $I_3$ vanishes due to the tapering by $\Pi_{s_0,\epsilon_0}$. 
For the first integral on the right-hand side we obtain with 
%the boundedness of the vertical density derivative
%(cf. (\ref{SLOPE_DEF_1})) and %with the Leibniz rule 
\begin{equation}\begin{split}\label{COROLLARY_SLOPES_1_PF_3}
&
\int_{I_1} | \partial_t  \big(\frac{\nabla_h\rhobar }{\partial_z\rhobar} \big)|^2\, dxdydz
=\int_{I_1} |\frac{\partial_z\rhobar \partial_t  \nabla_h\rhobar - \nabla_h\rhobar\partial_t\partial_z\rhobar }
{|\partial_z\rhobar|^2}
 |^2\, dxdydz
 \leq c\int_{I_1}|g|^2\, dxdydz
% &=
%\int_{I_1} |(g) (h^{-1}))|^2\, dxdydz,
\end{split}\end{equation}
with 
%\begin{equation}\begin{split}
$g:=\partial_z\rhobar \partial_t  \nabla_h\rhobar - \nabla_h\rhobar\partial_t\partial_z\rhobar$.
% and
%$h^{-1}:=|\partial_z\rhobar|^{-2}$.
%\end{split}\end{equation}
%For the function $h$ we obtain from the chain rule %for higher-order derivatives
%\begin{equation}\begin{split}\label{COROLLARY_SLOPES_1_PF_3b}
%&|(\frac{1}{h})|
%=| 
%\frac{\partial_z\rhobar}{|\partial_z\rhobar|^2}|
%\leq
%\sum_{j=1}^{|k|}\sum_{|n|=j}%{n_1+n_2+n_3=j}
%|\frac{c_k}{s_0^{j+1}} 
%\rho\ast J_1^{n,i}|%\\
%\leq
%K|\rho|,
%\end{split}\end{equation}
% where $K$ depends on constants 
% $c_k\in\mathbb{R}$ and on the stratification $s_0$. 
For the function $g$ in (\ref{COROLLARY_SLOPES_1_PF_3}) we obtain %with the Leibniz rule, %$l\in \mathbb{N}^3$,
\begin{equation}\begin{split}\label{COROLLARY_SLOPES_1_PF_3a}
&|g|=|\partial_{x_j}\rhobar \partial_t  \partial_{x_i}\rhobar|
=
%%|\sum_{|n|\leq |l|} D^n(\partial_{x_j}\rhobar) D^{l-n}(\partial_t  \partial_{x_i}\rhobar)|\\
%%&=
%|\big( \rho\ast ( \partial_{x_j}j_\eta)\big)\big(  \partial_t\rho\ast \partial_{x_i}j_\eta)\big)|\\
%&=
|\big( \rho\ast J_1^{j}\big)\big(  \partial_t\rho\ast J_1^{x_i}\big)|
\leq
c %|J_1^{n,j}|\, 
|\partial_t\rho\ast J_1^{x_i}|
=|(aF_\theta+bF_S)\ast J_1^{x_i} |,
\end{split}\end{equation}
with $J_1^{x_i}:=\partial_{x_i}j_\eta\in C^\infty(\Omega)$, ($i=1,2,3$) %An analogous estimate holds for the second term of the function $g$. 
and where we have used %We estimate now the right-hand side of in (\ref{COROLLARY_SLOPES_1_PF_3a}) using %the term with the time derivative of density by 
the evolution equation (\ref{density_eq}) for density. The thermal expansion function ``a'' and the saline contraction function ``b'' are defined in (\ref{density_eq}).
%We claim that a continuous, time-dependent function $M$ exist, such that
%\begin{equation}\begin{split}\label{COROLLARY_SLOPES_1_PF_4}
%&%\int_{I_1} |(\partial_t \rho)\ast J_1^{x_i} |^2\, dxdydz=
% \int_{I_1} |(aF_\theta+bF_S)\ast J_1^{x_i} |^2\, dxdydz\leq M(t).
%\end{split}\end{equation}
For $a\,F_\theta$ follows with the boundedness of the thermal expansion function 
\begin{equation}\begin{split}\label{COROLLARY_SLOPES_1_PF_4a}
& \int_{I_1}|(aF_\theta)\ast J_1^{x_i} |^2\, dxdydz
%& \int_{I_1} (a\,F_\theta)\phi\, dxdydz\\
%%=& \int_{I_1} a\phi(  v\nabla \theta +w\partial_z \theta)
%%+(\KISO(\rhobar)\nabla_3\theta+\KGM(\rhobar)\nabla_3\theta)(a\nabla_3\phi+\phi\nabla_3a)\, dxdydz\\
%%\leq& ||a||_{L^\infty}||\phi||_{L^\infty}(|| v||_{L^2}||\nabla \theta||_{L^2} +||w||_{L^2}||\partial_z \theta||_{L^2})y\\
%%+& (||\KISO||_{L^\infty}+||\KGM||_{L^\infty})(||a||_{L^\infty}||\nabla_3\theta||_{L^2}||\nabla_3\phi||_{L^2}+||\phi||_{H^2}||\nabla_3a||_{L^2}).
%&= \int_{I_1} |\big(a\, div_3(v \theta+(\KISO(\rhobar)+\KGM(\rhobar))\nabla_3\theta)\big)\ast J_1^{i} |^2\, dxdydz\\
%&= \int_{I_1} |\big(a\, v\nabla_3 \theta+a\,div_3(\KISO(\rhobar)+\KGM(\rhobar))\nabla_3\theta)\big)\ast J_1^{i} |^2\, dxdydz\\
%& \int_{I_1} |(a\,F_\theta)\ast J_1^{x_i} |^2\, dxdydz\\
%%%&= \int_{I_1} |\big(a(  v\cdot\nabla \theta +w\partial_z \theta
%%%+\REDIOP(\rhobar)(\theta)+\GMOP(\rhobar)(\theta))\big)\ast j_\eta^{l-k+1} |^2\, dxdydz\\
= \int_{I_1} |\big(a\, div_3(v \theta+(\KISO(\rhobar)+\KGM(\rhobar))\nabla_3\theta)\big)\ast J_1^{x_i} |^2\, dxdydz\\
%%&= \int_{I_1} |\big(a\, v\nabla_3 \theta+a\,div_3(\KISO(\rhobar)+\KGM(\rhobar))\nabla_3\theta)\big)\ast J_1^{i} |^2\, dxdydz\\
%%\end{split}\end{equation*}
%%\begin{equation}\begin{split}\label{COROLLARY_SLOPES_1_PF_5}
%&\leq ||a||_{L^\infty(\Omega)}^2\int_{I_1} | div_3\big(v_3 \theta+(\KISO(\rhobar)+\KGM(\rhobar))\nabla_3\theta\big)\ast 
%J_1^{x_1} |^2\, dxdydz\\
&\leq ||a||_{L^\infty(\Omega)}^2\int_{I_1} | \big(v \theta+(\KISO(\rhobar)+\KGM(\rhobar))\nabla_3\theta\big)\ast 
\nabla_3 J_1^{x_i} |^2\, dxdydz\leq M_\theta,
%%\end{split}\end{equation*}
\end{split}\end{equation}
% This shows that $a, F_\theta\in L^\infty ([0,T], H^{-2}(\Omega))$, where $H^{-2}(\Omega)$ denotes the dual to the Sobolev space $H^2$.
%Analogously follows  $bF_S\in L^\infty([0,T], H^{-2}(\Omega))$ and therefore $\partial_t\rho \in L^\infty([0,T], H^{-2}(\Omega))$.
here $M_\theta(t)$ denotes a continuous function that dominates the integral in (\ref{COROLLARY_SLOPES_1_PF_4a}),
because the integrand %velocity-temperature product and the terms involving the eddy operators 
is a convolution of an integrable function with a regularizing kernel and therefore smooth and bounded. 
Analogously follows for $bF_S$ in (\ref{COROLLARY_SLOPES_1_PF_3a}) 
that %we derive %a continuous function $M_S$ exists such 
$ \int_{I_1} |bF_S\ast J_1^{n-l} |^2\, dxdydz\leq M_S$.
The function $M:=M_\theta+M_S$ bounds the first term on the right-hand side of (\ref{COROLLARY_SLOPES_1_PF_2}). The bound for the second term follows analogously.\\ %This proves the assertion.
%From (\ref{COROLLARY_SLOPES_1_PF_4}), (\ref{COROLLARY_SLOPES_1_PF_3b}), (\ref{COROLLARY_SLOPES_1_PF_3a})
%and (\ref{COROLLARY_SLOPES_1_PF_3}) 
%It follows that the first term in (\ref{COROLLARY_SLOPES_1_PF_2}) satisfies the assertion.
%   The boundedness of the second term follows analogously.\\
%%%%%%%%%%%%%%%%%%%%%%%%%%
$Ad\, iii)$ 
From %the definition  (\ref{SLOPE_DEF_1}) of $\SL$ we obtain with 
the continuity of $\Pi_{s_0,\epsilon_0}$,
and the regularity properties of $\rhobar$ follows
%Lemma \ref{LEMMA_REGULARITY_APPROX_NORMAL_NEUTRAL} $i)$
%follows almost everywhere on $\Omega$ 
\begin{equation*}\begin{split}%\label{LEMMA_DIFFERENCES_14}
&|\SL_2-\SL_1|
%%=\bigg|
%%\frac{\neutral_{h,1}}{e_{z,1}}\Pi_{s_0,\epsilon_0}(e_{z,1}) 
%%- \frac{\neutral_{h,2}}{e_{z,2}}\Pi_{s_0,\epsilon_0}(e_{z,2})\bigg|\\% \Pi_{s_1,\epsilon_1}(z)\\
%%&
%=\bigg|
%\frac{\nabla_h\rhobar_{1}}{\partial_z\rhobar_{1}}\big(\Pi_{s_0,\epsilon_0}(\partial_z\rhobar_{1})- \Pi_{s_0,\epsilon_0}(\partial_z\rhobar_{2})\big)
%+
%\Pi_{s_0,\epsilon_0}(\partial_z\rhobar_{2})\big(\frac{\nabla_h\rhobar_{1}}{\partial_z\rhobar_{1}}-\frac{\nabla\rhobar_{2}}{\partial_z\rhobar_{2}}\big)\bigg|\, |\pi_{\partial\Omega}|\\
%&\leq
%c\big|\frac{\nabla_h\rhobar_{1}}{\partial_z\rhobar_{1}}\partial_z(\rhobar_1-\rhobar_2)\big|
%+
%\big|\frac{\Pi_{s_0,\epsilon_0}(\partial_z\rhobar_2)}{ \partial_z\rhobar_1\partial_z\rhobar_2}\big|\, 
%\big|
%\nabla\rhobar_1\partial_z(\rhobar_2-\rhobar_1)+\partial_z\rhobar_1\nabla(\rhobar_1-\rhobar_2)
%\big|\\
\leq
(\frac{c}{s_0}+\frac{C}{s_0^2})||\nabla_h\rhobar_1||_{\Htwoint}||\partial_z(\rhobar_1-\rhobar_2)||_{\Htwoint}\\
&+\frac{C}{s_0^2}
||\partial_z\rhobar_1||_{\Htwoint}||\nabla(\rhobar_1-\rhobar_2)||_{\Htwoint}
%&\leq (\frac{c}{s_0}+\frac{C}{s_0^2})|I_\rho|\, ||\rhobar_1-\rhobar_2||_{H^3(\Omegaint)}
\leq
c\tau(\frac{c}{s_0}+\frac{C}{s_0^2})|I_\rho|||\rho_1(t)-\rho_2(t)||_{\Lsq}.
\end{split}\end{equation*}
\end{proof}
%%%%%%%%%%%%%%%%%%%%%%%%%%%%%%%%%%%%%%%%%%%%%%%%
\begin{remark}\label{remark_three_derivatives}
Lemma \ref{LEMMA_DIFFERENCES} is used in Theorem \ref{THEOREM_EXISTENCE_WEAKSTRONG_SOLUTIONS} on the existence of weak solutions,
where we use it to prove the convergence of the 
Galerkin approximation of the eddy operators. We need this result also for the elliptic regularity of $\REDIOP$ in Lemma \ref{LEMMA_ELLIPTIC_REG_REDI}.
In our main result, Theorem \ref{THEOREM_EXISTENCE_STRONG_SOLUTIONS}, we apply the lemma to establish the $H^1$-estimate for 
temperature and salinity, to prove the estimate on the time derivative of these tracers for the Aubin-Lions lemma and in the uniqueness part of the proof. 
% Lemma \ref{LEMMA_DIFFERENCES} can easily be improved to $\SL\in L^\infty(H^s(\Omega)), \partial_t \SL\in L^\infty(H^s(\Omega))$, for $s\geq 0$. 
%For our proof of global well-posedness in Theorem \ref{THEOREM_EXISTENCE_STRONG_SOLUTIONS} it is sufficient that  $\SL\in L^\infty(H^2(\Omega)), \partial_t \SL\in L^\infty(H^2(\Omega))$. Since the density slopes $\SL$ are defined in terms of density derivatives (cf. (\ref{SLOPE_DEF_1})), this 
%illustrates that a density regularization has to provide additional regularity by three derivatives. 
%Alternatively one can regularize the density slopes $\SL(\rho)$ instead of the density $\rho$ but a regularization of this vector quantity is more subtle since one hast to preserve directional information  (see (\ref{PROOF_STRONG_SOLUTION_31})-(\ref{PROOF_STRONG_SOLUTION_41}))
\end{remark}
%%%%%%%%%%%%%%%%%%%%%%%%
%%%%%%%%%%%%%%%%%%%%%%%%%%%%%
%The decomposition (\ref{PROP_MIXING_TENSOR_2}) is used to establish the following Lemma.
%%%%%%%%%%%%%%%%%%%%%%%%%%%%%%%%%%%%%%%%%
\begin{lemma}[Ellipticity of $\KISO$]\label{LEMMA_PROP_REDIOPERATOR}
%$i)$ Let $(\theta,S)\in\VT\times\VS\cap\LinftyTS$ and let $\neutral:=\neutral(\theta,S)$ be the normal to the neutral direction. 
%Then with $\mu:=min\{K_I,K_D\}$ and $M:=max\{K_I,K_D\} $ it holds that
%\begin{equation}\begin{split}\label{LEMMA_PROP_REDIOPERATOR_2}
%&\mu||\nabla_3\theta||_{\Lsq}^2+k_\theta\int_{\Gamma_u}|\theta(x,y,0)|^2\, dxdy\\
%&\leq\int_\Omega \REDIOP[\neutral](\theta)\theta\, dxdydz
%\leq M||\nabla_3\theta||_{\Lsq}^2+k_\theta\int_{\Gamma_u}|\theta(x,y,0)|^2\, dxdy\\
%\text{and }\quad &
%\mu||\nabla_3 S||_{\Lsq}^2\leq\int_\Omega \REDIOP[\neutral](S)S\, dxdydz \leq M||\nabla_3 S||_{\Lsq}^2.
%%m||C||_{\VC}^2\leq\int_\Omega (\KISO(\neutral)\nabla_3 C)\cdot\nabla_3C\, dxdydz \leq M||C||_{\VC}^2.
%\end{split}\end{equation}
%$ii)$ 
Let a isoneutral density slope $\SL_0:=\SL(\rhobar_0)$, defined in (\ref{SLOPE_DEF_1}), be given. 
The isoneutral operator $\REDIOP$, defined in (\ref{REDIOP}), satisfies for $\theta\in \Hone\cap\LinftyT, S\in\Hone\cap\LinftyS$ the following inequalities
%\begin{split}\label{LEMMA_PROP_REDIOPERATOR_3}
\begin{align}
&\mu||\nabla_3\theta||_{\Lsq}^2+k_\theta\int_{\Gamma_u}|\theta(x,y,0)|^2\, dxdy\nonumber\\
&\leq\int_\Omega \REDIOP(\rhobar_0)(\theta)\theta\, dxdydz
 \leq M||\nabla_3\theta||_{\Lsq}^2+k_\theta\int_{\Gamma_u}|\theta(x,y,0)|^2\, dxdy\nonumber\\
\text{and }\  &
\mu||\nabla_3 S||_{\Lsq}^2\leq\int_\Omega \REDIOP(\rhobar_0)(S)S\, dxdydz \leq M||\nabla_3 S||_{\Lsq}^2,\nonumber\\
\text{with }\  &\mu:=min\{K_I,K_D\}, M:=max\{K_I,K_D\}.%\nonumber
 \label{LEMMA_PROP_REDIOPERATOR_3}
\end{align}
%\end{split}\end{equation}
%where $\mu:=min\{K_I,K_D\}, M:=max\{K_I,K_D\}$.\\
%$ii)$ For the isoneutral operator $\REDIOPSMALL$ the inequalities (\ref{LEMMA_PROP_REDIOPERATOR_3})  remain validwith $\mu=, M=$
\end{lemma}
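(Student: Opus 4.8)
The plan is to reduce the statement to a pointwise spectral property of the matrix $\KISO(\rhobar_0)$ and then feed that into the weak (bilinear) form of $\REDIOP$ obtained by integration by parts. The central observation is that, at every point, $\KISO$ is built from the orthogonal projection onto the dianeutral direction. Introducing the unit normal to the isoneutral surface,
\[
\mathbf{n} := \frac{1}{\sqrt{1+\SL^2}}(-\SLx,\,-\SLy,\,1)^{\top},
\]
I would verify by direct computation that $\KISO(\rhobar_0) = K_I\,(I - \mathbf{n}\mathbf{n}^{\top}) + K_D\,\mathbf{n}\mathbf{n}^{\top}$. Equivalently, and more cheaply, I would check that $\mathbf{n}$ is an eigenvector with eigenvalue $K_D$ and that the two tangential vectors $(1,0,\SLx)^{\top}$ and $(0,1,\SLy)^{\top}$, which span $\mathbf{n}^{\perp}$, are eigenvectors with eigenvalue $K_I$. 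Each check is a one-line algebraic identity using $\delta = K_D/K_I$ and $1+\SL^2 = 1+\SLx^2+\SLy^2$.

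From this eigendecomposition the spectrum of $\KISO(\rhobar_0)$ is exactly $\{K_I, K_I, K_D\}$ at every point, independently of the value of the slope; in particular, in the tapered region where $\SL = 0$ the tensor collapses to $\mathrm{diag}(K_I,K_I,K_D)$ with the same spectrum. Hence the pointwise two-sided bound
\[
\mu\,|\xi|^2 \le \xi^{\top}\KISO(\rhobar_0)\,\xi \le M\,|\xi|^2 \qquad \text{for all } \xi\in\mathbb{R}^3,
\]
with $\mu = \min\{K_I,K_D\}$ and $M = \max\{K_I,K_D\}$, holds uniformly on $\Omega$ (and in time), with no degeneracy (and, since $\delta<1$, in fact $\mu=K_D$, $M=K_I$).

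To obtain the stated inequalities I would pass to the bilinear form. Since every entry of $\KISO$ is $\tfrac{1}{1+\SL^2}$ times a quadratic in $\SLx,\SLy$, hence bounded, for $C\in \Hone$ the product $\KISO\nabla_3 C$ lies in $\Lsq$ and integration by parts gives
\[
\int_\Omega \REDIOP(\rhobar_0)(C)\,C\,dxdydz = \int_\Omega \KISO(\rhobar_0)\nabla_3 C\cdot\nabla_3 C\,dxdydz - \int_{\partial\Omega}\big(\KISO(\rhobar_0)\nabla_3 C\cdot\hat{\nu}\big)\,C\,dS.
\]
Inserting the imposed boundary conditions — the Robin-type surface flux for temperature on $\Gamma_u$, which produces exactly $k_\theta\int_{\Gamma_u}|\theta(x,y,0)|^2\,dxdy$, together with no-flux on the remaining boundary, and the no-flux condition throughout for salinity (so its boundary term vanishes) — and applying the pointwise bound with $\xi = \nabla_3 C$ under the integral yields both chains of inequalities simultaneously.

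The linear-algebra core is routine once the projection structure is spotted, so I do not expect the coercivity itself to be the obstacle: the eigenvalues are exact and slope-independent, so there is no degeneration even near the boundary or in the regularized stratification zone. The point requiring the most care is the boundary bookkeeping in the integration by parts — confirming that the weak form of $\REDIOP$ is well defined for $C\in\Hone$ (guaranteed by $\KISO\in L^\infty(\Omega)$), that the conormal flux $\KISO\nabla_3 C\cdot\hat\nu$ is read in the appropriate trace sense, and that the prescribed flux conditions reproduce precisely the surface term $k_\theta\int_{\Gamma_u}|\theta|^2$ for temperature while contributing nothing for salinity.
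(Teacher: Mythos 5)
Your proposal is correct and is essentially the paper's proof: the paper factorizes $\KISO(\rhobar_0)=\mathcal{R}(\SL_0)\,\mathbb{D}\,\mathcal{R}^T(\SL_0)$ with $\mathbb{D}=\mathrm{diag}(K_I,K_I,K_D)$ and an explicit orthogonal matrix $\mathcal{R}$, which is exactly your spectral identity $\KISO=K_I(I-\mathbf{n}\mathbf{n}^{\top})+K_D\,\mathbf{n}\mathbf{n}^{\top}$ written in rotation-matrix form, and it then concludes by the same integration by parts with the same boundary bookkeeping (Robin term on $\Gamma_u$ for temperature, vanishing flux for salinity). The only, minor, advantage of your projection form is that it remains literally valid where $|\SL_0|=0$ (the tapered near-boundary and weakly stratified regions), whereas the paper's $\mathcal{R}(\SL_0)$ contains entries such as $\SLy/|\SL|$ that are undefined there and must be read as a limit, even though the product $\mathcal{R}\,\mathbb{D}\,\mathcal{R}^T$ extends continuously.
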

%%%%%%%%%%%%%%%%%%%%%%%%%%%%%%%%%%%%%%%%%
\begin{proof}
%$Ad\, iv) $ The representation formula (\ref{PROP_MIXING_TENSOR_2}) implies
%\begin{equation*}\begin{split}
%&\xi\cdot (\KISO\xi)=(\mathcal{R}^T\xi)\cdot (\mathbb{D}\mathcal{R}^T\xi)
%=(\mathbb{D}^{1/2}\mathcal{R}^T\xi)\cdot (\mathbb{D}^{1/2}\mathcal{R}^T\xi)
%=|\mathbb{D}^{1/2}\mathcal{R}^T\xi|^2\\
%&=A_I|\mathcal{R}^T\xi e_1|^2+A_I|\mathcal{R}^T\xi e_2|^2+A_D|\mathcal{R}^T\xi e_3|^2
%\end{split}\end{equation*}
%where $e_i,$, $i=1,2,3,$ denotes the canonical basis in $\mathbb{R}^3$ and $\mathbb{D}^{1/2}:=diag(\sqrt{A_I},\sqrt{A_I},\sqrt{A_D})$. 
%With the orthogonality of $\mathcal{R}^T$ and with $m:=min\{A_I,A_D\}, M:=max\{A_I,A_D\} $ follows
%\begin{equation*}\begin{split}
%m|\xi|^2=m |\mathcal{R}\xi|^2
%\leq A_I|\mathcal{R}^T\xi e_1|^2+A_I|\mathcal{R}^T\xi e_2|^2+A_D|\mathcal{R}^T\xi e_3|^2
%\leq M | \mathcal{R}^T\xi|^2
%=M |\xi|^2.
%\end{split}\end{equation*}
%$Ad\, i)$ Let $C\in\VC$, i.e. $C$ can equal temperature $\theta$ or salinity $S$, and $\VC$ denotes the corresponding space $\VT$ or $\VS$. 
%Denote by $\SL_0$ the slope of the isoneutral vector $\neutral_0$. 
The isoneutral mixing operator $\KISO$ can be written as
\begin{equation}\begin{split}\label{PROP_MIXING_TENSOR_2}
\KISO(\rhobar)=\mathcal{R}(\SL) \mathbb{D}\mathcal{R}^T(\SL),
\end{split}\end{equation}
where the orthogonal matrix $\mathcal{R}(\SL)$ and the diagonal matrix $\mathbb{D}$ are given by
\begin{equation}\begin{split}\label{PROP_MIXING_TENSOR_3}
\mathcal{R}(\SL):=%\mathcal{R}(x,y,z,t)=
\begin{pmatrix}
\frac{\SLy}{|\SL|}&\frac{\SLx}{|\SL|\sqrt{1+\SL^2}}&-\frac{\SLx}{\sqrt{1+\SL^2}}\\
-\frac{\SLx}{|\SL|}&\frac{\SLy}{|\SL|\sqrt{1+\SL^2}}&-\frac{\SLy}{\sqrt{1+\SL^2}}\\
0&\frac{\SL}{\sqrt{1+\SL^2}}&\frac{1}{\sqrt{1+\SL^2}}
\end{pmatrix},\qquad
\mathbb{D}=
\begin{pmatrix}
K_I&0&0\\
0&K_I&0\\
0&0&K_D
\end{pmatrix}.
%\mathcal{R}^T=
%\begin{pmatrix}
%\frac{\SLy}{\SL}& -\frac{\SLx}{\SL}&0\\
%\frac{\SLx}{\SL\sqrt{1+\SL^2}}&\frac{\SLy}{\SL\sqrt{1+\SL^2}}&\frac{\SL}{\sqrt{1+\SL^2}}\\
%-\frac{\SLx}{\sqrt{1+\SL^2}}&-\frac{\SLy}{\sqrt{1+\SL^2}}&\frac{1}{\sqrt{1+\SL^2}}
%\end{pmatrix}.
\end{split}\end{equation}
For (\ref{LEMMA_PROP_REDIOPERATOR_3}) follows after integration by parts and with (\ref{PROP_MIXING_TENSOR_2})% (see also (\ref{PROP_MIXING_TENSOR_6})
\begin{equation*}\begin{split}\label{PROOF_POS_DEF_FULL}
&\int_\Omega \REDIOP(\rhobar_0)(C)C\, dxdydz
%=\int_\Omega \big(\KISO(\rhobar_0)\nabla_3C\big)\cdot \nabla_3C\, dxdydz\\
%&+k_C\int_{\Gamma_u}|C(x,y,0)|^2\, dxdy\\
%\end{split}\end{equation*}
%\begin{equation*}\begin{split}
=\int_\Omega |\mathbb{D}^{1/2}\mathcal{R}^T(\SL_0)\nabla_3C|^2\, dxdydz\\
&+k_C\int_{\Gamma_u}|C(x,y,0)|^2\, dxdy\\
%\end{split}\end{equation*}
%\begin{equation*}\begin{split}
&=\int_\Omega K_I|(\mathcal{R}^T(\SL_0)\nabla_3C)\cdot e_1|^2+K_I|(\mathcal{R}^T(\SL_0)\nabla_3C)\cdot e_2|^2\\
%\end{split}\end{equation*}
%\begin{equation*}\begin{split}
&+K_D|(\mathcal{R}^T(\SL_0)\nabla_3C)\cdot e_3|^2
\, dxdydz
+k_C\int_{\Gamma_u}|C(x,y,0)|^2\, dxdy, %m||C||_{\Hone}^2,
\end{split}\end{equation*}
where $e_i$, $i=1,2,3,$ denotes the canonical basis in $\mathbb{R}^3$ and $\mathbb{D}^{1/2}:=diag(\sqrt{K_I},\sqrt{K_I},$ $\sqrt{K_D})$ the diagonal matrix. Since $\mathcal{R}^T(\SL_0)$ is orthogonal, the assertion follows.
\end{proof}
%%%%%%%%%%%%%%%%%%%%%%%%%%%%%%%%%%%%%%%%%

%%%%%%%%%%%%%%%%%%%%%%%%%%%%%%%%%%%%%%%%%%%%%%%%
%\subsection{Properties of the Eddy Parametrization}\label{SUBSECT_PROOF_EDDY_PARAM}
%%%%%%%%%%%%%%%%%%%%%%%%%%%%%%%%%%%%%%%%%%%%%%%%
%%%%%%%%%%%%%%%%%%%%%%%%%%%%%

%%%%%%%%%%%%%%%%%%%%%%%%%%%%%%%%%%%%%%%%%%%%%%%%%%%%%%%%%
\begin{lemma}[Elliptic Estimate for Isoneutral Operator]\label{LEMMA_ELLIPTIC_REG_REDI}
Let $F, G\in \Lsq$ be given. 
Then there exists a solution $(\theta, S)$ of the equation system for the isoneutral diffusion operator $\REDIOP$ (cf. (\ref{REDIOP}))
\begin{equation}\begin{split}\label{REDI_OP_SOLVE}
&\REDIOP(\rhobar)(\theta)=F,\quad \text{ and }\quad
\REDIOP(\rhobar)(S)=G,
\end{split}\end{equation}
with boundary conditions (\ref{PE_AO_BOUNDARY_CONDITIONS_TRAC2}).
Equation (\ref{REDI_OP_SOLVE}) is satisfied 
%with $\neutral:=\neutral(\theta,S)$ defined in (\ref{NORMAL_NEUTRAL}), and where (\ref{REDI_OP_SOLVE}) is satisfied 
%\todo{Uniqueness of solutions to (\ref{REDI_OP_SOLVE}) is open but not needed. We need (\ref{REDI_OP_SOLVE3}) for $H^1$-bound on tracer in Well-Posedness proof.}
in the following sense for all $\phi^\theta\in\VT$ and $\phi^S\in\VS$
\begin{equation}\begin{split}\label{REDI_OP_SOLVE2}
&\int_\Omega(\KISO(\rhobar)\nabla_3\theta)\cdot\nabla_3\phi^\theta\,dxdydz+k_\theta\int_{\Gamma_u}\theta\phi^\theta\,dxdy
=\int_\Omega F\phi^\theta\,dxdydz,\\
&\int_\Omega(\KISO(\rhobar)\nabla_3 S)\cdot\nabla_3\phi^S\,dxdydz
=\int_\Omega G\phi^S\,dxdydz,
\end{split}\end{equation}
where $\KISO$ is defined in (\ref{REDI_TENSOR_FULL}).
%, i.e.
%\begin{align}
%on\ \Gamma_u:&\quad \nabla_3\theta\cdot n_3=-k_\theta\theta, \text{ and }\nabla_3 S\cdot n_3=0,
%\label{PE_AO_BOUNDARY_CONDITIONS2c_Lemma}\\
%on\ \Gamma_s: &\quad \nabla_3\theta\cdot \vec{n}=\nabla_3 S\cdot \vec{n}=0,
%\label{PE_AO_BOUNDARY_CONDITIONS2b_Lemma}\\
%on\ \Gamma_b:&\quad \nabla_3\theta\cdot n_3=\nabla_3 S\cdot n_3=0. \label{PE_AO_BOUNDARY_CONDITIONS2a_Lemma}
%\end{align}
The solution $(\theta,S)$ satisfies
\begin{equation}\begin{split}\label{REDI_OP_SOLVE3}
||\theta||_{{\Htwo}}\leq c||\REDIOP(\rhobar)(\theta)||_{\Lsq},\quad\text{and}\quad ||S||_{{\Htwo}}\leq c||\REDIOP(\rhobar)(S)||_{\Lsq},
\end{split}\end{equation}
where $c=c(K_I, K_D, |\Omega|, |I_\rho|)$, and $|I_\rho|$  is the interval length of admissible density values.
\end{lemma}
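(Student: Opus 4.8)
The plan is to proceed in two stages: first establish existence of a weak solution by Lax--Milgram, then upgrade it to an $H^2$ solution by elliptic regularity, exploiting at both steps the smoothness of the coefficients produced by the density regularization.

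For existence I would read (\ref{REDI_OP_SOLVE2}) as the weak formulation associated with the bilinear form $a(u,\phi):=\int_\Omega(\KISO(\rhobar)\nabla_3 u)\cdot\nabla_3\phi\,dxdydz+k_\theta\int_{\Gamma_u}u\phi\,dxdy$, the surface term being present only in the $\theta$-equation. Boundedness of $a$ is immediate from $\KISO\in L^\infty$ with constant $M=\max\{K_I,K_D\}$, and the coercivity $a(u,u)\geq\mu\|\nabla_3 u\|_{\Lsq}^2+k_\theta\int_{\Gamma_u}|u|^2$ is exactly the lower bound of Lemma \ref{LEMMA_PROP_REDIOPERATOR}. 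Together with a Poincar\'e-type inequality---furnished by the Robin surface term for $\theta$, and by passing to the mean-zero subspace (equivalently, working modulo constants) for the pure-Neumann salinity equation---this gives full $\Hone$-coercivity, so Lax--Milgram yields a unique weak solution with the a priori bound $\|u\|_{\Hone}\leq c\|F\|_{\Lsq}$, $c=c(K_I,K_D,|\Omega|)$.

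The core of the lemma is the $H^2$ estimate. The decisive observation is that, because $\SL$ is built from the \emph{mollified} density $\rhobar=\rho\ast j_\eta$, the entries of $\KISO(\rhobar)$ are in fact $C^\infty$: on the support of $\SL$ the tapering $\Pi_{s_0,\epsilon_0}$ forces $|\partial_z\rhobar|\geq s_0-\epsilon_0>0$, so the quotient $-\nabla_h\rhobar/\partial_z\rhobar$ is smooth, and where $\Pi_{s_0,\epsilon_0}$ vanishes the slope vanishes smoothly. Moreover $\|\partial^\alpha\rhobar\|_{L^\infty(\Omega)}\leq\|\partial^\alpha j_\eta\|_{L^1}\,|I_\rho|$, so that $\|\KISO\|_{W^{1,\infty}}$ is controlled purely by $|I_\rho|$, with the fixed regularization parameters $\eta,s_0,\epsilon_0$ absorbed into the constants. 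I would then rewrite the operator in nondivergence form, $-\KISO(\rhobar){:}\,\nabla_3^2\theta=F+(\nabla_3\cdot\KISO(\rhobar))\cdot\nabla_3\theta=:\tilde F$; since $\nabla_3\cdot\KISO\in L^\infty$ and $\nabla_3\theta\in\Lsq$ from the first stage, the right-hand side obeys $\|\tilde F\|_{\Lsq}\leq c\|F\|_{\Lsq}$. Interior $H^2$-regularity for the uniformly elliptic, Lipschitz-coefficient operator $\KISO{:}\,\nabla_3^2$ then follows from the standard difference-quotient argument (equivalently the $L^2$ Calder\'on--Zygmund estimate), giving $\|\theta\|_{\Htwoint}\leq c\|F\|_{\Lsq}$.

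The main obstacle is regularity up to $\partial\Omega$, where the geometry of the basin and the mixed (Robin on $\Gamma_u$, Neumann elsewhere) boundary conditions would normally obstruct a clean $H^2$-bound. Here the regularization pays off a second time: the cut-off $\taperbound$ makes $\SL\equiv 0$ within distance $\eta$ of $\partial\Omega$, so in a full boundary collar $\KISO(\rhobar)$ reduces to the \emph{constant} anisotropic-diagonal matrix $\mathrm{diag}(K_I,K_I,K_D)$ and $\REDIOP$ degenerates to a constant-coefficient anisotropic Laplacian. Boundary $H^2$-regularity for this operator under the stated conditions is classical: after flattening and the rescaling $z\mapsto\sqrt{K_I/K_D}\,z$ it becomes the Laplacian, handled by even reflection together with tangential difference quotients. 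Patching the interior and boundary estimates by a partition of unity and absorbing the $\Hone$-bound from the first stage yields (\ref{REDI_OP_SOLVE3}) with a constant depending only on $K_I,K_D,|\Omega|$ and $|I_\rho|$; the argument for $S$ is identical, the sole difference being the absence of the surface term.
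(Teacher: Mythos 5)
There is a genuine gap: you have treated $\rhobar$ as \emph{given data}, but in this lemma the coefficient matrix depends on the unknowns. The density is determined by the solution itself through the equation of state, $\rho=\rho(\theta,S,p_{st})$ from (\ref{GIBBS_SPECIFIC_rho}), and $\rhobar$ is its mollification; hence $\REDIOP(\rhobar)(\theta)=F$, $\REDIOP(\rhobar)(S)=G$ is a \emph{quasilinear coupled system} in which $\KISO(\rhobar(\theta,S))$ varies with $(\theta,S)$. Your Lax--Milgram step only solves the linearized problem with frozen coefficients, which is precisely Step~1 of the paper's proof (there too the constant in the $H^2$ bound is shown to depend only on $K_I,K_D,|\Omega|,|I_\rho|$, uniformly in the frozen pair). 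What is missing is the paper's Step~2: one defines the solution map $H(\theta',S'):=(\theta,S)$, where $(\theta,S)$ solves the linear problem with coefficients $\KISO(\rhobar(\theta',S'))$, and produces a solution of the actual system as a fixed point via the Tikhonov--Schauder theorem. That requires (i) invariance of a ball of radius $c\|F\|_{\Lsq}$, $c\|G\|_{\Lsq}$ (which your uniform estimate does give), (ii) \emph{continuity} of $H$ on $\VT\times\VS$, proved in the paper by bounding $\KISO(\rhobar'_k)-\KISO(\rhobar')$ through slope differences (Lemma \ref{LEMMA_DIFFERENCES}) and then density differences through tracer differences (Lemma \ref{LEMMA_DENSITY_DIFF}), and (iii) \emph{compactness} of $H$, from the uniform $H^2$ bound and the compact embedding $\Htwo\hookrightarrow\Hone$. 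Without this fixed-point layer your argument proves a different (linear) statement, and the existence claim in (\ref{REDI_OP_SOLVE})--(\ref{REDI_OP_SOLVE2}) is unestablished; note also that your ``unique weak solution'' holds for the frozen-coefficient problem only, and the lemma (consistently with Schauder) claims no uniqueness for the nonlinear system.

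Your treatment of the $H^2$ estimate itself is sound and in one respect more detailed than the paper's, which simply cites elliptic regularity theory (Brezis, Sect.~9.6): the observation that the cut-off $\taperbound$ forces $\SL\equiv 0$ in a collar of $\partial\Omega$, so that $\KISO$ is the constant matrix $\mathrm{diag}(K_I,K_I,K_D)$ there and boundary regularity reduces, after rescaling and reflection, to the constant-coefficient case, is a clean way to handle the mixed Robin/Neumann conditions on the cylindrical domain. Likewise the smoothness of the coefficients on the support of $\SL$ (the taper $\Pi_{s_0,\epsilon_0}$ keeping $|\partial_z\rhobar|$ bounded below) and the bound $\|\KISO\|_{W^{1,\infty}}\leq c(|I_\rho|)$ match the paper's use of $\|\nabla_3 k_{i,j}(\rhobar')\|_{L^\infty}\leq c_0|\Omega|\,|I_\rho|$. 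But these correct ingredients serve only the linearized problem; the heart of the lemma is the self-consistency of coefficients and solution, and that is what your proposal does not address.
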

%%%%%%%%%%%%%%%%%%%%%%%%%%%%%%%%%%%%%%%%%%%%%%%%
\begin{proof}
%%%%%%%%%%%%%%%%%%%%%%%%%%%%%%%%%%%%%%%%%%%%%%%%
%%%%%%%%%%%%%%%%%%%%%%%%%%%%%%%%%%%%%%%%%%%%%%%%%
{\bf Step 1: {\it Solvability of Linearized Equation.}}\\
%%%%%%%%%%%%%%%%%%%%%%%%%%%%%%%%%%%%%%%%%%%%%%%%%
We linearize (\ref{REDI_OP_SOLVE2}) by fixing an arbitrary pair $(\theta^{'},S^{'})\in\VT,\times\VS$. With the associated density 
$\rhobar^{'}:=\rhobar^{'}({\theta}^{'},{S}^{'})$ %  and the isoneutral slope  $\SL(\rhobar^{'})$.  For $C\in\{\theta,S\}$ 
we define $D_{\KISO}(\rhobar^{'})(C):=\nabla_3\cdot\big(\KISO({\rhobar}^{'})\nabla_3 C\big)$.
This operator is linear, continuous and uniformly elliptic on ${\VT}\times{\VS}$ (cf. Prop. \ref{LEMMA_PROP_REDIOPERATOR}). 
The linearized equation on $\VT\times\VS$ reads
\begin{equation}\begin{split}\label{REDI_OP_SOLVE_GALERKIN_12}
&\int_\Omega(\KISO({\rhobar}^{'})\nabla_3\theta)\cdot\nabla_3\phi^\theta\,dxdydz
+k_\theta\int_{\Gamma_u}\theta\phi^\theta\,dxdy
=\int_\Omega F\phi^\theta\,dxdydz,\\
%&\quad\text{for all }\phi^\theta\in\VT^m\\
&\int_\Omega(\KISO({\rhobar}^{'})\nabla_3 S)\cdot\nabla_3\phi^S\,dxdydz
=\int_\Omega G\phi^S\,dxdydz.
%\quad\text{for all }\phi^\theta\in\VS^m.
\end{split}\end{equation}
%The operator $D_{\KISO}[\neutral_m^{'}](C):=div_3\KISO[{\neutral}_m^{'}]\nabla C$, defined implicitly by (\ref{REDI_OP_SOLVE_GALERKIN_12}), is linear, continuous and according to Lemma \ref{LEMMA_PROP_REDIOPERATOR} $ii)$ uniformly elliptic on $\VT_m\times\VS_m$. % (cf. (\ref{LEMMA_PROP_REDIOPERATOR_3})). \\
The Lax-Milgram Lemma shows that for all fixed $\theta^{'}\in\VT,{S}^{'}\in \VS$ equation (\ref{REDI_OP_SOLVE_GALERKIN_12}) has a unique weak solution $\theta=\theta({\theta}^{'})\in\VT,\, S=S({S}^{'})\in\VS$, which according to elliptic regularity theory (e.g. Sect. 9.6. in \cite{BREZIS})
satisfies
\begin{equation}\begin{split}\label{REDI_OP_SOLVE_GALERKIN_13}
||\theta||_{{H^2(\Omega)}}\leq  c||F||_{\Lsq}\quad \text{and}\quad ||S||_{{H^2(\Omega)}}\leq  c||G||_{\Lsq},
\end{split}\end{equation}
here $c$ depends on the maximum norm $||\nabla_3k_{i,j}(\rhobar^{'})||_{L^\infty}$ of the gradient of the matrix elements 
$k_{i,j}(\rhobar^{'})$ of $\KISO$. %These gradients comprise the constants $K_I, K_D$ and the derivatives of $\rhoreg^{'}$. 
Since $||\nabla_3k_{i,j}(\rhobar^{'})||_{L^\infty}\leq c_0||\rho'||_\Lsq\leq c_0|\Omega|\, |I_\rho|$, with $c_0$ depending on constants $K_I, K_D$, we conclude that $c=c(K_I, K_D, |\Omega|, |I_{\rho}|)$. In particular is $c$ independent of $(\theta^{'},S^{'})\in\VT,\times\VS$.\\
%%%%%%%%%%%%%%%%%%%%%%%%%%%%%%%%%%%%%%%%%%
%%%%%%%%%%%%%%%%%%%%%%%%%%%%%%%%%%%%%%%%%%%%%%%%
{\bf Step 2: {\it Solvability of System (\ref{REDI_OP_SOLVE2}).}}\\
%%%%%%%%%%%%%%%%%%%%%%%%%%%%%%%%%%%%%%%%%%%%%%%%
Consider the mapping $H$ that assigns 
to each fixed pair $({\theta}^{'},{S}^{'})\in \VT\times \VS$
%of temperature and salinity fields %, that determine the coefficient matrix $\KISO[\neutral_m^{'}]$ of the elliptic operator 
%$D_{\KISO}[\neutral_m^{'}]$ in(\ref{REDI_OP_SOLVE_GALERKIN_12}),
a solution $(\theta,S)\in {\Htwo}\times {\Htwo}$ of (\ref{REDI_OP_SOLVE_GALERKIN_12})
with coefficient matrix $\KISO(\rhobar^{'})$
\begin{equation}\begin{split}\label{REDI_OP_SOLVE_GALERKIN_14}
H: \VT\times \VS &\to \VT \times \VS\\
({\theta}^{'},{S}^{'})&\mapsto H({\theta}^{'},{S}^{'}):=(\theta,S).
\end{split}\end{equation}
We assert the following three properties of $H$
\begin{equation}\begin{split}\label{REDI_OP_SOLVE_GALERKIN_14b}
i)\ &H\ \text{maps a closed ball}\ B_R\subset\VT\times \VS \text{ into itself}.\\
ii)\ &H\ \text{is continuous,}  \\
iii)\ &H\ \text{is compact.} 
\end{split}\end{equation}
$Ad\, i)$ Consider the ball $B_{R_1}^\theta\subset \VT$ and $B_{R_2}^S\subset \VS$ with radius $R_1:=c||F||_{\Lsq}$ and 
$R_2:=c||G||_{\Lsq}$, with $F,G$ and  $c=c(K_I, K_D, |\Omega|, |I_{\rho}|)$ from (\ref{REDI_OP_SOLVE_GALERKIN_13}).  
Choose a non-zero $(\theta_0,S_0)\in\VT\times \VS$, define 
${\theta}^{'}:=R_1\theta_0/||\theta_0||_{\Hone}$ and ${S}^{'}:=R_2S_0/||S_0||_{\Hone}$. Then 
$({\theta}^{'},{S}^{'})\in B_R:=B_{R_1}^\theta\times B_{R_2}^S$.
From (\ref{REDI_OP_SOLVE_GALERKIN_13}) follows that %the solution 
$H({\theta}^{'},{S}^{'})=(\theta,S)$ %to (\ref{REDI_OP_SOLVE_GALERKIN_12}) 
satisfies $||\theta||_{\Htwo}\leq R_1, ||S||_{{\Htwo}}\leq R_2$. 
Thus $||\theta||_{\Hone}\leq R_1$ and $||S||_{\Hone}\leq R_2$. \\ %This proves $i)$.\\
%%%%%%%%%%%%%%%%%%%%%%%%%%%%%%%%%%%%%%%%%
$Ad\, ii)$ 
Let the sequence
$({\theta}_k^{'},{S}_k^{'})_k\subseteq \VT\times\VS$ converge with respect to the $H^1$-topology 
of $\VT\times\VS$ to a limit $({\theta}^{'},{S}^{'})\in \VT\times\VS$ and denote the associated densities
by $(\rhobar_k^{'})_k$ and $\rhobar{'}$. 
%We have to show that $H({\theta}_k^{'},{S}_k^{'})$ converges to $H({\theta}^{'},{S}^{'})$.
%According to Lemma \ref{LEMMA_REGULARITY_APPROX_NORMAL_NEUTRAL}
%the regularizations $(\tilde{\theta}_k^{'},\tilde{S}_k^{ '})_k$ through the inverse Laplace operator (\ref{REG_NEUTRAL_TS}) converge in $H^3$ to the regularized limit
%$(\tilde{\theta}^{'},\tilde{S}^{ '})$. From Definition \ref{REG_NEUTRAL} and Lemma \ref{LEMMA_REGULARITY_APPROX_NORMAL_NEUTRAL} 
%follows that the generated sequence of neutral vectors $\neutral_k^{'}=\neutral[\tilde{\theta}_k^{'},\tilde{S}_k^{ '}]$
%converges in ${\Htwo}$ to $\neutral[\tilde{\theta}^{'},\tilde{S}^{ '}]$ and 
%Consider the difference of neutral vectors $\neutral_k^{'}-\neutral^{'}$, with $\neutral_k^{'}:=\neutral(\theta_k^{'},S_k^{'})$ and 
%$\neutral^{'}:=\neutral(\theta^{'},S^{'})$. We conclude from Lemma \ref{LEMMA_DIFFERENCES} $i)$ that $(\neutral_k^{'})_k$ converges in ${\Htwoint}$ 
%to $\neutral{'}$. We note that the expressions $N_0,N_1$  on the right-hand side of (\ref{LEMMA_DIFFERENCES_1_H^2})
%remain  bounded due to the assumed convergence of the sequence.
For each element of $({\theta}_k^{'},{S}_k^{'})_k$  exists a solution 
$(\theta_k, S_k)_k:=(H({\theta}_k^{'},{S}_k^{'} ))_k$ $\subseteq \VT\times\VS$ of  (\ref{REDI_OP_SOLVE_GALERKIN_12}), 
%with coefficient matrices $\KISO(\rhobar_k^{'})$ and with right-hand side $F,G$,
 i.e. for  $\phi^\theta\in\VT^m, \phi^S\in\VS^m$,  $k\in\mathbb{N}$ we have% it holds that
\begin{equation}\begin{split}\label{REDI_OP_SOLVE_GALERKIN_14c}
&\int_\Omega(\KISO({\rhobar}_k^{'})\nabla_3\theta_k)\cdot\nabla_3\phi^\theta\,dxdydz+k_\theta\int_{\Gamma_u}\theta_k\phi^\theta\,dxdy
=\int_\Omega F\phi^\theta\,dxdydz,\\
&\int_\Omega(\KISO({\rhobar}_k^{'})\nabla_3 S_k)\cdot\nabla_3\phi^S\,dxdydz=\int_\Omega G\phi^S\,dxdydz.
\end{split}\end{equation}
For equation (\ref{REDI_OP_SOLVE_GALERKIN_12}) with a coefficient matrix $\KISO(\rhobar^{'})$ given by the limit 
$\rhobar^{'}$ of $(\rhobar_k)_k$ there exists a 
solution $(\theta^{\sharp}, S^{\sharp})\in \VT\times\VS$
such that for $\phi^\theta\in\VT, \phi^S\in\VS$
\begin{equation}\begin{split}\label{REDI_OP_SOLVE_GALERKIN_14d}
&\int_\Omega(\KISO({\rhobar}^{'})\nabla_3\theta^{\sharp})\cdot\nabla_3\phi^\theta\,dxdydz
+k_\theta\int_{\Gamma_u}\theta^{\sharp}\phi^\theta\,dxdy
=\int_\Omega F\phi^\theta\,dxdydz,\\
%\quad\text{for all }\phi^\theta\in\VT^m,\\
&\int_\Omega(\KISO({\rhobar}^{'})\nabla_3 S^{\sharp})\cdot\nabla_3\phi^S\,dxdydz
=\int_\Omega G\phi^S\,dxdydz.
%\quad\text{for all }\psi^\theta\in\VS^m.
\end{split}\end{equation}
We show now that the sequence of solutions $(\theta_k,S_k)$ of (\ref{REDI_OP_SOLVE_GALERKIN_14c}) converges to the solution
$(\theta^{\sharp}, S^{\sharp})$ of (\ref{REDI_OP_SOLVE_GALERKIN_14d}). 
After subtracting (\ref{REDI_OP_SOLVE_GALERKIN_14d}) from (\ref{REDI_OP_SOLVE_GALERKIN_14c}) we obtain for the temperature difference with $\widehat{\mathbb{K}}_{iso}:= \KISO(\rhobar_k^{'})- \KISO(\rhobar^{'})$ and for $\phi^\theta\in\VT$ that
%\begin{equation}\begin{split}\label{REDI_OP_SOLVE_GALERKIN_14e}
\begin{align}
0&=\int_\Omega\big(\KISO({\rhobar}_k^{'})\nabla_3\theta_k-\KISO({\rhobar}^{'})\nabla_3\theta^{\sharp}\big)\cdot\nabla_3\phi^\theta\,dxdydz
+k_\theta\int_{\Gamma_u}(\theta_k-\theta^{\sharp})\phi^\theta\,dxdy\nonumber\\
%&=\lim_{k\to\infty}\int_\Omega\big(\KISO[{\rhobar}^{'}_k-{\neutral}^{'}]\nabla_3\theta_k)\big)\cdot\nabla\phi^\theta\, dxdydz\nonumber\\
&=\int_\Omega\big( \widehat{\mathbb{K}}_{iso} \nabla_3\theta_k
+ \KISO({\rhobar}^{'})\nabla_3(\theta_k-\theta^{\sharp})\big)\cdot\nabla\phi^\theta\, dxdydz
+k_\theta\int_{\Gamma_u}(\theta_k-\theta^{\sharp})\phi^\theta\,dxdy.\label{REDI_OP_SOLVE_GALERKIN_14e}
%\leq &
%\big|\int_\Omega\big( \widehat{\mathbb{K}}_{iso} \nabla_3\theta_k\big)\cdot\nabla\phi^\theta\, dxdydz\big|\\
%+&\big|\int_\Omega\big(  \KISO[{\neutral}^{'}]\nabla_3(\theta_k^{'}-\theta^{\sharp})\big)\cdot\nabla\phi^\theta\, dxdydz\big|.%\\
%%=&\int_\Omega\big(\KISO[{\neutral}^{'}]\nabla_3(\theta_k-\theta^{\sharp})\big)\cdot\nabla_3\phi^\theta\,dxdydz.
%\end{split}\end{equation}
\end{align}
%This implies
%\begin{equation}\begin{split}\label{REDI_OP_SOLVE_GALERKIN_14f}
%&\lim_{k\to\infty}\big|\int_\Omega\big( \widehat{\mathbb{K}}_{iso} \nabla_3\theta_k\big)\cdot\nabla_3\phi^\theta\, dxdydz\big|\\
%=
%&\lim_{k\to\infty}\big|\int_\Omega\big( \KISO({\neutral}^{'})\nabla_3(\theta_k-\theta^{\sharp})\big)\cdot\nabla_3\phi^\theta\, dxdydz
%+k_\theta\int_{\Gamma_u}(\theta_k-\theta^{\sharp})\phi^\theta\,dxdy\big|.
%\end{split}\end{equation}
In the first term on the right-hand of (\ref{REDI_OP_SOLVE_GALERKIN_14e}), the differences $\widehat{\mathbb{K}}_{iso}=(\hat{k}_{i,j})_{i,j}$ can be bounded by density slope differences, i.e $|\hat{k}_{i,j}|\leq c|\SL(\rhobar_k^{'})-\SL(\rhobar^{'})|$. Lemma \ref{LEMMA_DIFFERENCES} bounds slope differences by density differences, $|\SL(\rhobar_k^{'})-\SL(\rhobar^{'})|\leq c|\rhobar_k^{'}-\rhobar^{'}|$, and Lemma \ref{LEMMA_DENSITY_DIFF} bounds density differences by temperature, salinity differences and with the $H^1$-convergence of $(\theta_k^{'}, S_k^{'})$ to $(\theta^{'},S^{'})$
follows
\begin{equation}\begin{split}\label{REDI_OP_SOLVE_GALERKIN_14g}
&\lim_{k\to\infty}\int_\Omega\big( \widehat{\mathbb{K}}_{iso} \nabla_3\theta_k\big)\cdot\nabla_3\phi^\theta\, dxdydz\\
&\leq c\lim_{k\to\infty}||\theta_k||_{{\Hone}}||\phi^\theta||_{\Hone} \big(||\theta{'}-\theta_k^{'}||_{\Lsq)} 
+||S^{'}-S_k^{'}||_{\Lsq)}\big)=0.
\end{split}\end{equation}
For the second term on the right-hand side of (\ref{REDI_OP_SOLVE_GALERKIN_14e}) we derive with the ellipticity of $\KISO$ (cf. Lemma \ref{LEMMA_ELLIPTIC_REG_REDI}) 
%, with (\ref{REDI_OP_SOLVE_GALERKIN_14f}), (\ref{REDI_OP_SOLVE_GALERKIN_14g}) 
and with the particular choice $\phi^\theta:=\theta_k-\theta^{\sharp}$
\begin{equation}\begin{split}\label{REDI_OP_SOLVE_GALERKIN_14h}
0\geq& \lim_{k\to\infty}\big|\int_\Omega\big(  \KISO({\rhobar}^{'})\nabla_3(\theta_k-\theta^{\sharp})\big)\cdot\nabla_3(\theta_k-\theta^{\sharp})\, dxdydz
+k_\theta\int_{\Gamma_u}|\theta_k-\theta^{\sharp}|^2\,dxdy\big|\\
\geq &\mu\lim_{k\to\infty}\big\{||\nabla_3(\theta_k-\theta^{\sharp})||_{\Lsq}^2+k_\theta||\theta_k(z=0)-\theta^{\sharp}(z=0)||^2_{\Lsq}\big\}.
\end{split}\end{equation}
From the convergence of $(\nabla_3\theta_k)_k$ to $\nabla\theta^{\sharp}$ in $\Lsq$ and of $(\theta_k(z=0))_k$ in $L^2(\Gamma_u)$ 
to $\theta^{\sharp}(z=0)$ follows with Poincar\'e's inequality (cf. Prop. III.2.38 in \cite{Boyer_NSE_book}) that $(\theta_k)_k $ converges to 
$\theta^{\sharp}$ in $\Hone$. 
%follows from Poincar\'e's inequality, which is applicable because
%$(\theta_k-\theta^\sharp)_k$ converges to zero in $L^2(\Gamma_u)$ and therefore $(\theta_k-\theta^{\sharp})_k$ does have in the limit a zero trace on the upper boundary $\Gamma_u$, which is a set of positive measure (cf. Prop. III.2.38 in \cite{Boyer_NSE_book}). 
%%Thus $(\theta_k)_k$ converges to $\theta^{\sharp}$ in $\Lsq$ and 
Thus $(\theta_k)_k$ converges to $\theta^{\sharp}$ in $\Hone$. 
%To deduce that $||\theta_k-\theta^{\sharp}||_{\Lsq}^2$ converges to zero by using the Poincar\'e inequality
%we have to exclude the possibility that the difference $\theta_k-\theta^{\sharp}$ converges to a nonzero constant. This can indeed be excluded, 
%because the difference $\theta_k-\theta^\sharp$ converges to zero in $L^2(\Gamma_u)$ and therefore the 
%difference $\theta_k-\theta^{\sharp}$ does have in the limit a zero trace on the on the upper part $\Gamma_u$ of the ocean boundary, which is a set of positive measure (see also Proposition III.2.38 in \cite{Boyer_NSE_book}). 
%Thus $(\theta_k)_k$ converges to $\theta^{\sharp}$ in $\Lsq$ and the asserted convergence of $(\theta_k)_k$ to $\theta^{\sharp}$ in 
%$\Hone$ follows.
 %Since the last term in (\ref{REDI_OP_SOLVE_GALERKIN_14e}) vanishes in particular for $\phi^\theta=\theta_k-\theta^{'}$ we conclude with Lemma \ref{LEMMA_PROP_REDIOPERATOR} $i)$ the convergence of
%$(\theta_k)_k$ to $\theta^{'}$ in $\Hone$. 
For salinity we obtain an analogous statement to (\ref{REDI_OP_SOLVE_GALERKIN_14h}), %with temperature replaced by salinity, 
here the boundary term is absent due to the boundary condition for salinity. 
From %that $||\nabla_3(S_k-S^{\sharp})||_{\Lsq}^2$ converges to zero, and since the
Poincar\'e's  inequality  (cf. Prop. III.2.39 in \cite{Boyer_NSE_book}) follows that $(S_k)_k$ converges to $S^{\sharp}$ in $\Hone$.
%Since we assume that the mean salinity is already subtracted from the salinity, we can di
%The convergence of salinity $(S_k)_k$ to $S^{\sharp}$ in $\Hone$ follows .
%The solution sequence $(\theta_k, S_k)_k$ is due to (\ref{REDI_OP_SOLVE_GALERKIN_13}) bounded uniformly in ${\Htwo}$.
%This implies that $(\theta_k, S_k)_k$ converge weakly in ${\Htwo}$ and strongly in $\Hone$  to a limit $(\theta,S)\in \VT^m\times\VS^m$.
Since $(\theta_k, S_k)=H({\theta}_k^{'},{S}_k^{'})$ and $(\theta^\sharp, S^\sharp)=H({\theta}^{'},{S}^{'})$ this proves the continuity of $H$.\\ 
%From properties $i)$ and $ii)$ in (\ref{REDI_OP_SOLVE_GALERKIN_14b}) follows with the Brouwer fixed point theorem that the mapping $H$
%in (\ref{REDI_OP_SOLVE_GALERKIN_14}) has a fixed point $(\theta^0,S^0)\in \VT\times\VS$, 
%\begin{equation}\begin{split}\label{REDI_OP_SOLVE_GALERKIN_19}
%H(\theta^0,S^0)=(\theta^0,S^0).
%\end{split}\end{equation}
%This fixed point of $H$ is a solution to the approximate system (\ref{REDI_OP_SOLVE_GALERKIN_11}). 
%%%%%%%%%%%%%%%%%%%%%%%%%%%%%%%%%%%%%%%%%%%%%%%%
%\subsubsection*{Step 4: {\it  Compactness.}}
%%%%%%%%%%%%%%%%%%%%%%%%%%%%%%%%%%%%%%%%%%%%%%%%
\noindent $Ad\, iii)$
Consider a sequence
$({\theta}_k^{'},{S}_k^{'})_k\subseteq B_{R_1}^\theta\times B_{R_2}^S$. 
%We have to show that 
%$(H({\theta}_k^{'},{S}_k^{'}))_k$ contains a subsequence that converges in the $H^1$-norm of $\VT\times\VS$. 
The elements of the image sequence under $H$, 
$({\theta}_k,{S}_k):=H({\theta}_k^{'},{S}_k^{'})$, solve  (\ref{REDI_OP_SOLVE_GALERKIN_12})
with coefficients determined by $\rhobar_k^{'}:=\rhobar({\theta}_k^{'},{S}_k^{'})$. This implies
that for all $k\in\mathbb{N}$ the solution $\theta_k,{S}_k$ satisfies  
%a sequence of solutions of (\ref{REDI_OP_SOLVE_GALERKIN_12}). The sequence 
%satisfies according to (\ref{REDI_OP_SOLVE_GALERKIN_13}) %for all $m$
\begin{equation}\begin{split}\label{REDI_OP_SOLVE_GALERKIN_20}
||\theta_k||_{{\Htwo}}\leq  c||F||_{\Lsq}\quad \text{and}\quad ||S_k||_{{\Htwo}}\leq  c||G||_{\Lsq},
\end{split}\end{equation}
since $c=c(K_I, K_D, |\Omega|, |I_{\rho}|)$ the sequence
 is bounded uniformly in $\Htwo$. This implies that $(\theta_k,S_k)_k$ converges weakly in $\Htwo$.
Due to the compact embedding of $\Htwo$ in $\Hone$ there exists a subsequence $(\theta_n,S_n)_n$ that converges
in $\Hone$. % to a limit $(\theta^0,S^0)\in\Hone$. 

From properties $i)-iii)$ in (\ref{REDI_OP_SOLVE_GALERKIN_14b}) follows with the Tikhonnov-Schauder Fixed-Point Theorem that the mapping $H$ has a fix point, i.e. $H(\theta,S)=(\theta,S)$. %This proves the Lemma.
\end{proof}
%%%%%%%%%%%%%%%%%%%%%%%%%%%%%%%%%%%%%%%%%

%%%%%%%%%%%%%%%%%%%%%%%%%%%%%%%%%%%%%%%%%%
%%%%%%%%%%%%%%%%%%%%%%%%%%
%%%%%%%%%%%%%%%%%%%%%%%%%%%%%%%%%%%%%%%%%%%%%%%%
\section{Regularity Results of Ocean Primitive Equations with Mesoscale Eddy Parametrization}\label{SECT_STRONG_SOLUTION}
%%%%%%%%%%%%%%%%%%%%%%%%%%%%%%%%%%%%%%%%%%%%%%%%
We specify now boundary conditions and introduce basic function spaces. In Section \ref{SECT_MAIN_RESULT} we state our main results.

\paragraph*{Boundary Conditions.} 
%%%%%%%%%%%%%%%%%%%%
The domain is $\Omega:=M\times (-h,0)\subseteq \mathbb{R}^3$, with depth $h>0$, the bottom $M\subseteq\mathbb{R}^2$ is a bounded domain in $\mathbb{R}^2$ with a boundary $\partial M$ that is a $C^2$-curve. 
The boundary $\partial \Omega=\Gamma_s\cup\Gamma_b\cup\Gamma_u$ consists of a lateral boundary $\Gamma_s:=\{(x,y,z)\in\bar{\Omega}:(x,y)\in\partial M \}$, a bottom boundary $\Gamma_b:=\{(x,y,z)\in\bar{\Omega}:z=-h \}$ and a surface boundary 
$\Gamma_u:=\{(x,y,z)\in\bar{\Omega}:z=0 \}$. %, such that $\partial\Omega=\Gamma_s\cup\Gamma_b\cup\Gamma_u$. 
%At the top we impose the boundary condition that the ocean is driven by wind stresses, for the vertical velocity we impose a rigid lid, on the sides we impose for horizontal velocity a no penetration boundary condition and a stress-free boundary condition (i.e. the tangent component of the stress vanishes at the boundary), at the bottom we also have the stress-free boundary condition for the horizontal velocity and for the vertical velocity a no penetration boundary condition: 
The boundary conditions follow Cao-Titi \cite{CaoTiti}. 
Specifically, we impose at the top a wind driven boundary condition for velocity and a rigid-lid for vertical velocity. At the side a no penetration and a stress-free boundary condition, % (i.e. the tangent component of the stress vanishes),  
at the bottom we employ for the horizontal velocity the stress-free boundary condition for the horizontal and a no penetration boundary condition for the vertical velocity: 
\begin{subequations}\label{PE_AO_BOUNDARY_CONDITIONS_VELOC_1}
\begin{align*}
%\text{\small velocity bound. cond.}
%\begin{cases}
&on\ \Gamma_u:%=\{(x,y,z)\in\bar{\Omega}:z=0 \}:\ 
\partial_zv=h\tau, w=0,\\%\label{PE_AO_BOUNDARY_CONDITIONS_VELOC_U1} \\
&on\ \Gamma_s:%=\{(x,y,z)\in\bar{\Omega}:(x,y)\in\partial M \}:
 \ v\cdot\vec{n}=0, \partial_{\vec{n}} v\times \vec{n}=0, w=0,\ \\%\label{PE_AO_BOUNDARY_CONDITIONS_VELOC_S1}\\
&on\ \Gamma_b:%=\{(x,y,z)\in\bar{\Omega}:z=-h \}:\ 
\partial_z v=0, w=0, 
%\end{cases}
\end{align*}
\end{subequations}
where $\tau$ is a given 2D wind stress field. 
Since the isoneutral density slope vector $\SL$ vanishes by definition in the vicinity of the boundary $\partial\Omega$
only constant diagonal matrix terms of $\KISO$ remain. 
This allows to express the tracer boundary conditions %(\ref{PE_AO_BOUNDARY_CONDITIONS_TRAC}) 
%(\ref{PE_AO_BOUNDARY_CONDITIONSd}),
%(\ref{PE_AO_BOUNDARY_CONDITIONSf}), (\ref{PE_AO_BOUNDARY_CONDITIONSg}) 
without the mixing tensor $\KISO$  in the following way %in terms of potential temperature and salinity
%\begin{equation}\begin{split}\label{PE_AO_BOUNDARY_CONDITIONS2}
\begin{subequations}\label{PE_AO_BOUNDARY_CONDITIONS_TRAC2}
\begin{align}
%\begin{cases}
on\ \Gamma_u:&\quad \nabla_3\theta\cdot n_3=- k_\theta(\theta-\theta^*),\ \text{and } \nabla_3 S\cdot n_3=0, \label{BC_tracer_top}\\
on\ \Gamma_s: &\quad \nabla_3\theta\cdot \vec{n}=\nabla_3 S\cdot \vec{n}=0, \label{BC_tracer_lateral}\\
on\ \Gamma_b:&\quad \nabla_3\theta\cdot n_3=\nabla_3 S\cdot n_3=0 .\label{BC_tracer_bottom}
%\end{cases}
\end{align}
\end{subequations}
The boundary conditions for velocity and potential temperature can be homogenized (see \cite{LIONS_TEMAM_WANG_2}, Sect. 2.4., \cite{CaoTiti} p. 248) by adding a $\tau$- and $\theta^*$-dependent term such that
%in the surface boundary condition %the $\tau^*$- and $\theta^*$-terms disappear. Therefore 
we can assume without loss of generality that $\tau=0$ and $\theta^*=0$.
%\end{split}\end{equation}
%These boundary conditions are consistent with the boundary conditions of the primitive equations without the ocean mesocale eddy parametrizations
%and illustrate that these parametrizations act only in the ocean interior uniformly away from the boundary.
%%%%%%%%%%%%%%%%%%%%%%%%%%%%%%%%%%%%%%%%%%%%%%%%%
\paragraph*{Function Spaces.}
%%%%%%%%%%%%%%%%%%%%%%%%%%%%%%%%%%%%%%%%%%%%%%%%%
We define the spaces for velocity, temperature and salinity. 
\begin{equation*}\begin{split}%\label{DEF_VSPACE}
\tilde{\HV}:=&\bigg\{v\in ({\bf L}^2(\Omega))^2 :\, \nabla_h\cdot\int_{-h}^0 v(z)\, dz=0, v\text{ satisfies boundary condition } (\ref{PE_AO_BOUNDARY_CONDITIONS_VELOC_1})  \bigg\},\\
\tilde{\HT}:=&\bigg\{\theta\in \Lsq:\theta\text{ satisfies boundary condition }
(\ref{PE_AO_BOUNDARY_CONDITIONS_TRAC2})\text{ for temperature}\bigg\},\\
\tilde{\HS}:=&\bigg\{S\in\Lsq : S\text{ satisfies boundary condition }
(\ref{PE_AO_BOUNDARY_CONDITIONS_TRAC2})\text{ for salinity,} \int_\Omega S dxdydz=0
%&\hskip2cm \int_\Omega S\, dxdydz=0
\bigg\}.\\
%\end{split}\end{equation*}
%\begin{equation*}\begin{split}
%%%%%%%%%%%%%%%
%\tilde{\VV}:=\bigg\{v\in ({\bf C}^\infty(\bar{\Omega}))^2 :\ &\partial_zv|_{z=0}=0,\ \partial_zv|_{z=-h}=0,\ v\cdot\vec{n}|_{\Gamma_s}=0,\\
%&\partial_{\vec{n}}v\times\vec{n}|_{\Gamma_s}=0,\ \nabla\cdot\int_{-h}^0 v(x,y,\xi,t)\, d\xi=0\bigg\},\\
\tilde{\VV}:=&\bigg\{v\in ({\bf C}^\infty(\bar{\Omega}))^2 : v\text{ satisfies boundary condition } (\ref{PE_AO_BOUNDARY_CONDITIONS_VELOC_1})  \bigg\},\\
\tilde{\VT}:=&\bigg\{\theta\in C^\infty(\bar{\Omega}) :\theta\text{ satisfies boundary condition }
(\ref{PE_AO_BOUNDARY_CONDITIONS_TRAC2})\text{ for temperature }
\bigg\},\\
%\end{split}\end{equation*}
%\begin{equation*}\begin{split}
\tilde{\VS}:=&\bigg\{S\in C^\infty(\bar{\Omega}) : S\text{ satisfies boundary condition }
(\ref{PE_AO_BOUNDARY_CONDITIONS_TRAC2})\text{ for salinity,}\int_\Omega S\, dxdydz=0\bigg\}.
%&\hskip2cm \int_\Omega S\, dxdydz=0
%\bigg\}.
\end{split}\end{equation*}
Denote by $\VV$, $\VT, \VS$ the closure of $\tilde{\VV}$ in the Sobolev space $(\Hone)^2$ and of $\tilde{\VT}, \tilde{\VS}$ in 
$\Hone$ and by  $\HtwoV, \HtwoT, \HtwoS$ the closure of $\tilde{\VV}$ in $(\Htwo)^2$ and of $\tilde{\VT}, \tilde{\VS}$ in 
$\Htwo$.
%%%%%%%%%%%%%%%%%%%%%%%%%%%%%

The vertical velocity $w$, that is determined by the constraint (\ref{PE_OCEAN_3}), can by virtue of the boundary conditions be expressed as
\begin{equation}\label{VERT_VELOC_REFORM}
w(x,y,z,t)=-\int_{-h}^z\nabla_h\cdot v(x,y,\xi,t)\, d\xi=-\nabla\cdot\int_{-h}^z v(x,y,\xi,t)\, d\xi.
\end{equation}
The pressure term in the velocity equation (\ref{PE_OCEAN_1}) can with the hydrostatic approximation (\ref{PE_OCEAN_2})  be formulated as 
\begin{equation}\label{PRESSURE_REFORM}
p(x,y,z,t)=\int_{-h}^zg\rho(\theta,S, p_{st})(x,y,\xi,t)\, d\xi+p_s(x,y,t),
\end{equation}
where $p_s$ is the surface pressure, $p_{st}$ is the static pressure given by (\ref{GIBBS_PRESSURE_BOUSSINESQ}). 
%%%%%%%%%%%%%%%%%%%%%%%%%%%%%%%%%%%%%%%%%%%%%%%%% 
%%%%%%%%%%%%%%%%%%%%%%%%%
\begin{definition}[Weak and Strong Solutions]\label{DEF_STRONGSOLUTION}
Let initial conditions $v_0\in \HV$ and 
$\theta_0\in \HT\cap \LinftyT, S_0\in \HS\cap\LinftyS$ be given and $[0,T], {T}>0$ be a time interval. Denote by $\KISO, \KGM$ the isoneutral diffusion 
 and the advection tensor, defined in (\ref{REDI_TENSOR_FULL}) and (\ref{GM_DEF_2}), respectively.\\
%Let the hypothesis on ocean thermodynamics (\ref{GIBBS_RHO_ASSUMPTION1}) and (\ref{GIBBS_RHO_ASSUMPTION2}) hold.\\
$i)$ The triple $(v,\theta,S)$ is called a {\it weak solution}
of (\ref{PE_OCEAN}) on $[0,T]$ if
%The triple $(v,\theta,S)$ is called a strong solution
%of (\ref{PE_OCEAN_COMPLETE}) on $[0,{T}]$ if it satisfies 
%\begin{equation*}\begin{split}
% &v\in C([0,{T}],\VV)\cap L^2([0,{T}], (\Htwo)^2),\\
% &\theta\in C([0,{T}],\VT)\cap L^2([0,{T}],\Htwo),\\
%  &S\in C([0,{T}],\VS)\cap L^2([0,{T}],{\Htwo}),\\
% &\partial_t v\in L^1([0,{T}], \Lsq),\\
%  &\partial_t (\theta,S)\in L^1([0,{T}], \Lsq),
%\end{split}\end{equation*}
it satisfies for all testfunctions $\Phi\in \HtwoV$ and $\phi^\theta\in \HtwoT, \phi^S\in \HtwoS$ the equations
%and if it satisfies for all test functions $\Phi\in \VV$ and $\phi\in \VT, \psi\in\VS$
\begin{subequations}\label{STRONG_PE_OCEAN}
\begin{align}
&\int_\Omega \partial_t v\cdot\Phi\, dx
+\int_\Omega (v\cdot\nabla) v\cdot\Phi\, dx 
-\int_\Omega \big(\nabla\cdot\int_{-h}^z v(x,y,\xi,t)\, d\xi \big )(\partial_z v)\Phi\, dxdydz \nonumber\\
&+\int_\Omega ( \int_{-h}^zg\rho(x,y,\xi,t)\,d\xi)\nabla_h\cdot \Phi\, dxdydz 
%+\int_{\Gamma_u}(\nabla_h p_s)\cdot\Phi\,dxdydz\nonumber\\
+\int_\Omega f\vec{k}\times v\Phi\, dxdydz\nonumber\\
&+\int_\Omega \frac{1}{Re_1}\nabla v\cdot\nabla\Phi 
+ \frac{1}{Re_2}\partial_z v\partial_z \Phi\, dxdydz
=0,\label{STRONG_PE_OCEAN_1}\\
%\end{align}
%\begin{align}
%&\partial_z p +g\rho=0 \label{STRONG_PE_OCEAN_2}\\
%&\nabla\cdot v+\partial_zw=0\label{STRONG_PE_OCEAN_3}\\ 
&\int_\Omega \partial_t \theta\phi^\theta\,dxdydz
+ \int_\Omega (v\cdot\nabla) \theta \phi^\theta\,dxdydz\nonumber\\
&-\int_\Omega \big(\nabla\cdot\int_{-h}^0 v(x,y,\xi,t)\, d\xi \big )(\partial_z \theta)\phi\,dxdydz+k_\theta\int_{\Gamma_u}\theta\phi^\theta\, dxdy\nonumber\\
&+\int_\Omega \big(\KISO(\rhobar)\nabla_3 \theta\big)\cdot\nabla_3\phi^\theta\,dxdydz
+\int_\Omega \big(\KGM(\rhobar)\nabla_3 \theta\big)\cdot\nabla_3\phi^\theta\,dxdydz=0,\label{STRONG_PE_OCEAN_4}\\
%\end{align}
%\begin{align}
&\int_\Omega \partial_t S\psi^S\,dxdydz
+ \int_\Omega (v\cdot\nabla) S\psi^S\,dxdydz\nonumber\\
&-\int_\Omega \big(\nabla\cdot\int_{-h}^0 v(x,y,\xi,t)\, d\xi \big )(\partial_z S)\psi^S\,dxdydz\\
&+\int_\Omega \big(\KISO(\rhobar)\nabla_3 S\big)\cdot\nabla_3\psi^S\,dxdydz\nonumber
+\int_\Omega \big(\KGM(\rhobar)\nabla_3 S\big)\cdot\nabla_3\psi^S\,dxdydz
=0,\label{STRONG_PE_OCEAN_5}
%\\
% \text{ with }&\rho=\rho(\theta,S,p_{st})\text{ given by } (\ref{GIBBS_SPECIFIC_rho}).
\end{align}
\end{subequations}
 with $\rho=\rho(\theta,S,p_{st})$ given by (\ref{GIBBS_SPECIFIC_rho}) 
and if $(v,\theta,S)$ satisfies
\begin{equation*}\begin{split}
 &v\in C([0,T],\Lsq)\cap L^2([0,T],\VV),\\
 &\theta\in C([0,T], \HT\cap \LinftyT  )\cap L^2([0,T],\VT),\\
 &S\in C([0,T],\HS\cap \LinftyS)\cap L^2([0,T],\VS),\\
 &\partial_t v\in L^1([0,T],( \Htwo)^*),\\
  &\partial_t \theta\in L^1([0,T], \VT^*),\,\partial_t S\in L^1([0,T], \VS^*).
\end{split}\end{equation*}
$ii)$ The triple $(v,\theta,S)$ is called a strong solution
of (\ref{PE_OCEAN}) on $[0,\mathcal{T}]$ if it 
satisfies (\ref{STRONG_PE_OCEAN}) for all testfunctions $\Phi\in \VV$ and $\phi^\theta\in \VT, \phi^S\in \VS$
and if 
\begin{equation*}\begin{split}
 &v\in C([0,{T}],\VV)\cap L^2([0,{T}], {\Htwo}(\Omega)),\\
 &\theta\in C([0,{T}],\VT)\cap L^2([0,{T}],{\Htwo}(\Omega)),\\
% \end{split}\end{equation*} 
% \begin{equation*}\begin{split}
  &S\in C([0,{T}],\VS)\cap L^2([0,{T}],{\Htwo}(\Omega)),\\
 &\partial_t v\in L^1([0,{T}], \Lsq),\\
  &\partial_t (\theta,S)\in L^1([0,{T}], \Lsq).
\end{split}\end{equation*}
\end{definition}
%%%%%%%%%%%%%%%%%%%%%%%%%
%%%%%%%%%%%%%%%%%%%%%%%%%%%%%%%%%%%%%%%%%%%%%%%%
\subsection{Statement of Main Results}\label{SECT_MAIN_RESULT}
%%%%%%%%%%%%%%%%%%%%%%%%%%%%%%%%%%%%%%%%%%%%%%%%
%In proving the existence of strong solutions we have to prove $H^1$-bounds on velocity, temperature and salinity. We follow the strategy of \cite{CaoTiti}. The $H^1$-bounds
%for velocity follows analogously, while the $H^1$-bounds for temperature and salinity are different due to the additional nonlinearity of isoneutral diffusion and eddy parametrization. 
%\subsection{Well-Posedness Theorem}
%%%%%%%%%%%%%%%%%%%%%%%%%%%%%%%%%%%%%%%%%%%%%%%%
\begin{theorem}[Existence of Weak Solutions]\label{THEOREM_EXISTENCE_WEAKSTRONG_SOLUTIONS}
Let $v_0\in \HT$, $\theta_0\in \HT\cap \LinftyT, S_0\in \HS\cap\LinftyS$ 
and the time interval $[0,T], {T}>0$, be given. 
Then there exists a weak solution $(v,\theta,S)$ in the sense of Definition \ref{DEF_STRONGSOLUTION} $i)$
of  (\ref{STRONG_PE_OCEAN}) on $[0,{T}]$. 
%Then there exists a weak solution $(v,\theta,S)$ on $[0,{T}]$  of the system (\ref{STRONG_PE_OCEAN})
%in the sense of Definition \ref{DEF_STRONGSOLUTION} $i)$. 
%This solution is unique and depends continuously on the initial conditions.
\end{theorem}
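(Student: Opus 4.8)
The plan is to construct a weak solution by the Faedo--Galerkin method, following the classical scheme for the primitive equations (Lions--Temam--Wang \cite{LIONS_TEMAM_WANG_2}, Cao--Titi \cite{CaoTiti}) while tracking carefully the new nonlinear eddy terms. First I would fix Galerkin bases of $\VV$, $\VT$, $\VS$ --- for the tracers one may take the eigenfunctions of the Laplacian subject to the boundary conditions (\ref{PE_AO_BOUNDARY_CONDITIONS_TRAC2}) --- and seek finite-dimensional approximations $(v_n,\theta_n,S_n)$ solving the projected system (\ref{STRONG_PE_OCEAN}). The regularized density $\rhobar$ defined in (\ref{REG_NEUTRAL_TS}) enters the coefficient matrices $\KISO(\rhobar)$ and $\KGM(\rhobar)$; since $\rhobar$ is a mollification of $\rho(\theta_n,S_n,p_{st})$, the resulting Galerkin ODE system has a locally Lipschitz right-hand side and hence admits local-in-time solutions, which the a priori bounds below will extend to $[0,T]$.

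Next I would derive estimates uniform in $n$. For the velocity the estimate is the classical one: testing the momentum equation (\ref{STRONG_PE_OCEAN_1}) against $v_n$ yields, after the usual treatment of the buoyancy coupling (\ref{PRESSURE_REFORM}) and the homogenized wind stress, control of $v_n$ in $L^\infty([0,T],\Lsq)\cap L^2([0,T],\VV)$. For the tracers I would test (\ref{STRONG_PE_OCEAN_4}) and (\ref{STRONG_PE_OCEAN_5}) against $\theta_n$ and $S_n$: the transport terms vanish because the velocity is divergence-free and satisfies the no-penetration conditions, the advective contribution drops out because $\KGM$ is pointwise skew-symmetric so that $\int_\Omega(\KGM(\rhobar)\nabla_3\theta_n)\cdot\nabla_3\theta_n\,dxdydz=0$, and the Redi contribution is coercive by the ellipticity of $\KISO$ (Lemma \ref{LEMMA_PROP_REDIOPERATOR}). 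This bounds $\theta_n,S_n$ in $L^\infty([0,T],\Lsq)\cap L^2([0,T],\VT)$ (resp.\ $\VS$). The maximum-principle bounds of Lemma \ref{LEMMA_TRACER_BOUNDED} keep $(\theta_n,S_n)$ in the admissible ranges $\LinftyT\times\LinftyS$ and hence $\rho$ in $\Linftyrho$; by Lemma \ref{LEMMA_DIFFERENCES}$(i)$ the slope $\SL(\rhobar)$ is then bounded in $L^\infty([0,T],H^2(\Omega))$, so the eddy coefficients are uniformly bounded. Finally I would bound $\partial_t v_n$, $\partial_t\theta_n$, $\partial_t S_n$ in the relevant dual spaces to prepare the compactness step.

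With these bounds I would extract a subsequence converging weakly-$*$ in the energy spaces and, by the Aubin--Lions--Simon lemma, strongly in $L^2([0,T],\Lsq)$. The limit in the velocity nonlinearities $(v\cdot\nabla)v$ and $w\partial_z v$ is classical (strong times weak $L^2$ convergence). The decisive new step is the limit in the eddy fluxes $\KISO(\rhobar_n)\nabla_3\theta_n$ and $\KGM(\rhobar_n)\nabla_3\theta_n$. Here the density regularization is what closes the argument: strong convergence $(\theta_n,S_n)\to(\theta,S)$ in $\Lsq$ gives, via Lemma \ref{LEMMA_DENSITY_DIFF}, strong convergence $\rho_n\to\rho$ in $\Lsq$; because $\rhobar$ is a mollification, all its derivatives on $\Omegaint$ are controlled by $\|\rho_n\|_{\Lsq}$ alone, so $\SL(\rhobar_n)\to\SL(\rhobar)$ in $L^\infty$ by Lemma \ref{LEMMA_DIFFERENCES}$(iii)$, whence $\KISO(\rhobar_n)\to\KISO(\rhobar)$ and $\KGM(\rhobar_n)\to\KGM(\rhobar)$ in $L^\infty$. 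Pairing this strong convergence of the coefficients with the weak $L^2$ convergence of $\nabla_3\theta_n$ identifies the limit of each flux term, and one concludes that $(v,\theta,S)$ satisfies (\ref{STRONG_PE_OCEAN}); continuity in time and attainment of the initial data follow in the standard way once the $\partial_t$-bounds are in hand.

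I expect the main obstacle to be exactly this convergence of the nonlinear eddy terms. Without the density regularization the slope $\SL$ would depend on genuine first derivatives of $\theta,S$ through $\nabla_h\rho/\partial_z\rho$, which are only weakly convergent, and the product of a weakly convergent coefficient with a weakly convergent gradient cannot be identified in the limit. The mollification in (\ref{REG_NEUTRAL_TS}) transfers all the derivatives onto the smooth kernel $j_\eta$, decoupling the regularity of the slope from that of the tracers and upgrading the coefficient convergence to $L^\infty$; the tapering $\Pi_{s_0,\epsilon_0}$ together with the cut-off $\pi_{\partial\Omega}$ additionally remove the degeneracy of the denominator $\partial_z\rhobar$ and the boundary singularities, so that the slope is globally well-defined and smooth. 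This is precisely the point anticipated in the heuristic of Section \ref{SECT_Background} and the reason the regularization cannot be dispensed with at the level of weak solutions.
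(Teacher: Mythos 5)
Your proposal is correct and follows essentially the same route as the paper: a Faedo--Galerkin scheme with locally Lipschitz right-hand side (via the mollified density), energy estimates using the coercivity of $\KISO$ (Lemma \ref{LEMMA_PROP_REDIOPERATOR}), the skew-symmetry of $\KGM$, the maximum principle of Lemma \ref{LEMMA_TRACER_BOUNDED}, dual-space bounds on the time derivatives, Aubin--Lions compactness, and limit passage in the eddy fluxes by upgrading the coefficient convergence through Lemmas \ref{LEMMA_DENSITY_DIFF} and \ref{LEMMA_DIFFERENCES}. Your closing diagnosis of why the regularization is indispensable at the weak-solution level --- transferring derivatives onto the kernel so that strong $\Lsq$ convergence of the tracers yields strong convergence of $\SL$, which then pairs with the weakly convergent gradients --- is exactly the mechanism the paper exploits in its Step 5.
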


\begin{corollary}[Small Density Slope Approximation: Existence of Weak Solutions]\label{THEOREM_EXISTENCE_WEAKSTRONG_SOLUTIONS_SMALL_SLOPE}
%Suppose the assumptions of Theorem \ref{THEOREM_EXISTENCE_STRONG_SOLUTIONS} are satisfied.
%Let $v_0\in \HT$, $\theta_0\in \HT\cap \LinftyT, S_0\in \HS\cap\LinftyS$ 
%and the time interval $[0,T], {T}$>0 be given. 
The assertion of Theorem \ref{THEOREM_EXISTENCE_WEAKSTRONG_SOLUTIONS} remains valid 
if in  (\ref{STRONG_PE_OCEAN}) 
the eddy operators  $\KISO, \KGM$ are replaced by their small density slope approximations $\KISOSMALL, \KGMsmall$, defined in
(\ref{EDDY_OPERATORS_SMALL}).
%Then there exists a weak solution $(v,\theta,S)$ in the sense of Definition \ref{DEF_STRONGSOLUTION} $i)$
%of (\ref{STRONG_PE_OCEAN}) on $[0,{T}]$. 
%This solution is unique and depends continuously on the initial conditions.
\end{corollary}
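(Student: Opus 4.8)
The plan is to re-run the proof of Theorem~\ref{THEOREM_EXISTENCE_WEAKSTRONG_SOLUTIONS} essentially verbatim, replacing $\KISO,\KGM$ throughout by $\KISOSMALL,\KGMsmall$, and to check that the three structural properties of the eddy operators on which that proof rests survive the replacement: coercivity of the diffusion bilinear form, skew-symmetry of the advection tensor, and continuity of the coefficient map required to pass to the Galerkin limit. First I would establish the analogue of the ellipticity Lemma~\ref{LEMMA_PROP_REDIOPERATOR} for $\KISOSMALL$. A completion of squares gives, for every $\xi\in\mathbb{R}^3$,
\[
\xi^{T}\KISOSMALL\,\xi
= K_I\big[(\xi_1+\SLxsmall\xi_3)^2+(\xi_2+\SLysmall\xi_3)^2\big]+K_D\,\xi_3^2,
\]
where I have used $K_I\delta=K_D$. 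Since the small-slope cut-off~(\ref{REDI_TENSOR_SMALL_SLOPE}) enforces $|\SLsmall|\leq r$, the associated symmetric matrix ranges over a compact set of uniformly positive-definite matrices with bounded entries; hence the bilinear form appearing in~(\ref{STRONG_PE_OCEAN}) satisfies $\int_\Omega(\KISOSMALL\nabla_3 C)\cdot\nabla_3 C\,dxdydz\geq\mu'\|\nabla_3 C\|_{\Lsq}^2$ with $\mu'=\mu'(K_I,K_D,r)>0$, and $\KISOSMALL\in L^\infty(\Omega)$ uniformly. The skew-symmetry of $\KGMsmall$ is evident from its matrix form, so that $\int_\Omega(\KGMsmall\nabla_3 C)\cdot\nabla_3 C\,dxdydz=0$ for $C\in\{\theta,S\}$, exactly as for the full tensor; the eddy-advection term therefore drops out of the energy identity.

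With these two facts in hand the a priori estimates go through as in the full-tensor case, and are in fact more robust: the uniform bound $|\SLsmall|\leq r$ means the coercivity constant $\mu'$ and the $L^\infty$-bound on $\KISOSMALL$ do not degenerate, so the Galerkin energy estimate yields the bounds of Definition~\ref{DEF_STRONGSOLUTION}~$i)$ on $(v,\theta,S)$ together with the time-derivative bounds needed for Aubin--Lions. (This robustness is the analytic counterpart of the numerical stability discussed in Section~\ref{SECT_Background}.) Moreover Lemma~\ref{LEMMA_TRACER_BOUNDED}, whose proof invokes only the ellipticity of the diffusion part and the skew-symmetry of the advection part, continues to supply $\rho(t)\in\Linftyrho$. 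I would therefore set up the Galerkin scheme in $\VV\times\VT\times\VS$ with the regularized density inside the slopes, derive these bounds, and extract strongly convergent subsequences $(v_n,\theta_n,S_n)$ together with $\rhobar_n\to\rhobar$.

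The only genuinely new issue is passing to the limit in the nonlinear coefficients $\KISOSMALL(\rhobar_n),\KGMsmall(\rhobar_n)$, and I expect this to be the main obstacle. As in the full case the density is regularized inside the slope, so that Lemma~\ref{LEMMA_DIFFERENCES} controls $\SL(\rhobar_n)-\SL(\rhobar)$; the difficulty is the sharp indicator cut-off $\mathbf{1}_{\{|\SL|<r\}}$ in~(\ref{REDI_TENSOR_SMALL_SLOPE}), which is discontinuous and so does not obviously render $\rhobar\mapsto\SLsmall(\rhobar)$ continuous. I would resolve this in one of two ways: either replace the sharp cut-off by a smooth monotone tapering in the spirit of $\Pi_{s_0,\epsilon_0}$ (the physically and numerically standard choice, after which the continuity estimate of Lemma~\ref{LEMMA_DIFFERENCES} applies directly), or keep the sharp cut-off and observe that the regularized density makes $\SL(\rhobar)$ continuous, so the level set $\{|\SL(\rhobar)|=r\}$ is Lebesgue-null; combined with the pointwise convergence $\SL(\rhobar_n)\to\SL(\rhobar)$ and the uniform bound $|\SLsmall|\leq r$, dominated convergence then gives $\KISOSMALL(\rhobar_n)\to\KISOSMALL(\rhobar)$ and $\KGMsmall(\rhobar_n)\to\KGMsmall(\rhobar)$ in every $L^p(\Omega)$, $p<\infty$. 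This convergence, paired with the weak convergence of $\nabla_3\theta_n,\nabla_3 S_n$, lets me pass to the limit in the weak formulation~(\ref{STRONG_PE_OCEAN}) and identify $(v,\theta,S)$ as the desired weak solution.
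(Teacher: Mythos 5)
Your proposal follows essentially the same route as the paper. The paper's own proof of the corollary is exactly the observation that only the ellipticity statement of Lemma~\ref{LEMMA_PROP_REDIOPERATOR} must be re-proved for $\KISOSMALL$, which it does by applying Young's inequality to the cross terms $\int_\Omega \SLxsmall\,\partial_z C\,\partial_x C + \SLysmall\,\partial_z C\,\partial_y C\, dxdydz$; your completion of squares, $\xi^{T}\KISOSMALL\,\xi = K_I\big[(\xi_1+\SLxsmall\xi_3)^2+(\xi_2+\SLysmall\xi_3)^2\big]+K_D\xi_3^2$, is the identical estimate in closed form, and your uniform coercivity constant $\mu'(K_I,K_D,r)$ is correct since $|\SLsmall|\leq r$. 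The paper then simply declares the rest of the proof of Theorem~\ref{THEOREM_EXISTENCE_WEAKSTRONG_SOLUTIONS} unchanged, which matches your treatment of the skew-symmetry of $\KGMsmall$ and of the a priori and time-derivative bounds.

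Where you go beyond the paper is the limit passage through the sharp cut-off (\ref{REDI_TENSOR_SMALL_SLOPE}); the paper is silent on this point (a footnote only notes that the $L^3$-control of the slopes is automatic in the small-slope case, while convergence still uses the regularized density, consistent with the remark in the introduction). Your first resolution --- replacing the indicator by a smooth monotone taper in the spirit of $\Pi_{s_0,\epsilon_0}$, after which Lemma~\ref{LEMMA_DIFFERENCES} applies --- is sound and is in line with the paper's regularization philosophy. Your second resolution, however, contains a genuine error: continuity of $\SL(\rhobar)$ does \emph{not} imply that the level set $\{|\SL(\rhobar)|=r\}$ is Lebesgue-null, since a continuous function can satisfy $|\SL(\rhobar)|=r$ on a set of positive measure; the null-set property holds only for all but countably many values of $r$, whereas $r$ is fixed in advance as a model parameter, so the dominated-convergence argument cannot be run for the given $r$ without an additional hypothesis. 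Since you present the smooth-taper route as the primary (and physically standard) option, the proposal as a whole stands, but the fallback route should be deleted or repaired.
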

%%%%%%%%%%%%%%%%%%%%%%%%%%%%%%%%%%%%%
%%%%%%%%%%%%%%%%%%%%%%%%%%%%%%%%%%%%%%%%%%%%%%%%
\begin{theorem}[Global Well-Posedness]\label{THEOREM_EXISTENCE_STRONG_SOLUTIONS}
Let $v_0\in \VT$, $\theta_0\in \VT\cap \LinftyT, S_0\in \VS\cap\LinftyS$ 
and the time interval $[0,T], {T}>0$ be given. 
Then there exists a strong solution $(v,\theta,S)$ in the sense of Definition \ref{DEF_STRONGSOLUTION} $ii)$
of (\ref{STRONG_PE_OCEAN}) on $[0,{T}]$. 
This solution is unique and depends continuously on the initial conditions.
\end{theorem}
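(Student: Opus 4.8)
The plan is to adapt the energy-method strategy of Cao-Titi \cite{CaoTiti} to the nonlinear eddy operators, via a Galerkin scheme, uniform a priori bounds, compactness, and finally uniqueness and continuous dependence. First I would project (\ref{STRONG_PE_OCEAN}) onto finite-dimensional spaces spanned by smooth eigenfunctions compatible with the boundary conditions (\ref{PE_AO_BOUNDARY_CONDITIONS_VELOC_1}) and (\ref{PE_AO_BOUNDARY_CONDITIONS_TRAC2}), with the regularized density $\rhobar$ entering $\KISO(\rhobar)$ and $\KGM(\rhobar)$; local solvability of the resulting ODE system is standard, and the entire difficulty lies in deriving bounds uniform in the Galerkin dimension that close globally on $[0,T]$.

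The estimates form a hierarchy. At the $\Lsq$ level the coercivity of $\REDIOP$ (Lemma \ref{LEMMA_PROP_REDIOPERATOR}) and the skew-symmetry of $\KGM$, which forces $\int_\Omega(\KGM(\rhobar)\nabla_3 C)\cdot\nabla_3 C\,dxdydz=0$, yield $v,\theta,S\in L^\infty_t\Lsq\cap L^2_t H^1_x$, while Lemma \ref{LEMMA_TRACER_BOUNDED} supplies the pointwise bounds $\theta(t)\in\LinftyT$, $S(t)\in\LinftyS$, $\rho(t)\in\Linftyrho$. For the velocity $H^1$-bound I would reproduce the Cao-Titi splitting into barotropic and baroclinic modes, first controlling $\partial_z v$ and then $\nabla v$ in $L^\infty_t\Lsq\cap L^2_t H^1_x$; the only structural change is that the buoyancy forcing in (\ref{STRONG_PE_OCEAN_1}) is driven by the nonlinear equation of state (\ref{GIBBS_SPECIFIC_rho}), whose Lipschitz control is furnished by Lemma \ref{LEMMA_DENSITY_DIFF}. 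The crux is the tracer $H^1$-estimate: testing (\ref{STRONG_PE_OCEAN_4}) and (\ref{STRONG_PE_OCEAN_5}) against $\REDIOP(\rhobar)(\theta)$ and $\REDIOP(\rhobar)(S)$ produces the dissipation $\|\REDIOP(\rhobar)(\theta)\|_{\Lsq}^2$, which by the elliptic estimate of Lemma \ref{LEMMA_ELLIPTIC_REG_REDI} controls $\|\theta\|_{\Htwo}$, together with the term $\frac{d}{dt}\frac12\int_\Omega\KISO(\rhobar)\nabla_3\theta\cdot\nabla_3\theta\,dxdydz$ whose coercivity recovers the $H^1$-norm. The accompanying error $\frac12\int_\Omega(\partial_t\KISO(\rhobar))\nabla_3\theta\cdot\nabla_3\theta\,dxdydz$ is absorbed into the dissipation using the temporal slope bound $\partial_t\SL\in L^\infty_t\Lsq$ of Lemma \ref{LEMMA_DIFFERENCES}$\,ii)$, an $L^4$-interpolation of $\nabla_3\theta$, and Young's inequality; the advective terms and the GM contribution $\GMOP(\rhobar)(\theta)$, together with the $L^\infty_t H^1_x$-bound on the eddy transport velocity $\vbolus=-\partial_z(\kappa\SL)$, $\wbolus=\nabla\cdot(\kappa\SL)$ (a consequence of $\SL\in L^\infty_t H^2_x$ from Lemma \ref{LEMMA_DIFFERENCES}$\,i)$), are likewise controlled by the velocity bound and the dissipation.

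With these uniform $H^1$-bounds I would estimate $\partial_t v,\partial_t\theta,\partial_t S$ directly from the equations, again invoking (\ref{density_eq}) and Lemma \ref{LEMMA_DIFFERENCES}$\,ii)$ for the temporal regularity of the tensor, and then extract a convergent subsequence by the Aubin-Lions lemma. Passage to the limit in the nonlinear eddy terms is justified by the continuity of the slope in the density, Lemma \ref{LEMMA_DIFFERENCES}$\,iii)$, combined with Lemma \ref{LEMMA_DENSITY_DIFF}, exactly as in Theorem \ref{THEOREM_EXISTENCE_WEAKSTRONG_SOLUTIONS}, yielding a strong solution in the sense of Definition \ref{DEF_STRONGSOLUTION}$\,ii)$. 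For uniqueness and continuous dependence I would write the equations for the difference of two solutions and run an $\Lsq$ energy estimate; the differences of $\KISO$ and $\KGM$ are bounded by density differences through Lemma \ref{LEMMA_DIFFERENCES}$\,iii)$ and then by tracer differences through Lemma \ref{LEMMA_DENSITY_DIFF}, so that Gronwall's inequality closes both assertions.

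I expect the principal obstacle to be precisely the tracer $H^1$-estimate, and within it the control of the spatial and temporal derivatives of the mixing tensor: the terms carrying $\nabla\KISO\cdot\nabla\theta$ and $\partial_t\KISO$ encode an accumulation of density derivatives (cf. Remark \ref{remark_three_derivatives}) that exceeds the regularity of a strong solution, which is exactly why the density regularization inside the eddy operators is indispensable --- without it the slope regularity $\SL\in L^\infty_t H^2_x$ and $\partial_t\SL\in L^\infty_t\Lsq$ of Lemma \ref{LEMMA_DIFFERENCES} would fail and the dissipation could not dominate.
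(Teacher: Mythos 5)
Your proposal is correct and, in its technical substance, coincides with the paper's proof: the tracer $H^1$-estimate by testing against $\REDIOP(\rhobar)(C)$ as in (\ref{PROOF_STRONG_SOLUTION_2}), the absorption of the $\partial_t\KISO$-term via the temporal slope bound $\partial_t\SL\in L^\infty_t\Lsq$ of Lemma \ref{LEMMA_DIFFERENCES}\,$ii)$ (which rests on the density evolution equation (\ref{density_eq})), the control of $\vbolus,\wbolus$ through $\SL\in L^\infty_t H^2$ of Lemma \ref{LEMMA_DIFFERENCES}\,$i)$, the conversion of the dissipation into an $\Htwo$-bound via the elliptic estimate of Lemma \ref{LEMMA_ELLIPTIC_REG_REDI}, and uniqueness by an $\Lsq$ difference estimate closed through Lemma \ref{LEMMA_DIFFERENCES}\,$iii)$, Lemma \ref{LEMMA_DENSITY_DIFF} and Gronwall are exactly the paper's Steps 1--4. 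The one genuine difference is organizational: the paper does not re-run a Galerkin scheme at the strong-solution level. It takes the weak solution already furnished by Theorem \ref{THEOREM_EXISTENCE_WEAKSTRONG_SOLUTIONS} (whose proof contains the Galerkin construction, local ODE solvability, and the passage to the limit in the eddy terms that you propose to redo) and upgrades its regularity by the $H^1$ a priori estimates, so no second compactness/limit argument for $\KISO,\KGM$ is needed. Your route buys something too: performing the estimates at the approximation level sidesteps the formal character of differentiating the weak solution in time, but it carries a small obligation the paper avoids --- Lemma \ref{LEMMA_DIFFERENCES}\,$ii)$ is stated for strong solutions, so in a pure Galerkin derivation you must verify the analogous bound for $\partial_t\SL_m$ at the approximate level (it does hold, since the slope is built from the mollified density and (\ref{density_eq}) persists under projection, the convolution kernel supplying the needed smoothness) and then re-justify the limit in the nonlinear tensors, for which Lemma \ref{LEMMA_DIFFERENCES}\,$iii)$ together with Lemma \ref{LEMMA_DENSITY_DIFF} suffices as you indicate. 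With that caveat discharged, both routes prove the theorem, and your closing diagnosis --- that the accumulation of density derivatives in $\nabla\KISO$ and $\partial_t\KISO$ is what makes the regularization indispensable --- is precisely the point of Remarks \ref{remark_three_derivatives} and \ref{remark_minimal_regularization}.
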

%%%%%%%%%%%%%%%%%%%%%%%%%%%%%%%%%%%%%
%\newpage

\begin{corollary}[Global Well-Posedness - Small Density Slope Approximation]\label{THEOREM_EXISTENCE_STRONG_SOLUTIONS_SMALL_SLOPE}
%Suppose the assumptions of Theorem \ref{THEOREM_EXISTENCE_STRONG_SOLUTIONS} are valid.
%%Let $v_0\in \HT$, $\theta_0\in \HT\cap \LinftyT, S_0\in \HS\cap\LinftyS$ 
%%and the time interval $[0,T], {T}$>0 be given. 
%Assume furthermore that in  (\ref{STRONG_PE_OCEAN}) 
%the eddy operators  $\KISO, \KGM$ are replaced by their small slope approximations $\KISOSMALL$, $\KGMsmall$.
%%, defined in(\ref{EDDY_OPERATORS_SMALL}).
%Then there exists a strong solution $(v,\theta,S)$ in the sense of Definition \ref{DEF_STRONGSOLUTION} $ii)$
%of (\ref{STRONG_PE_OCEAN}) on $[0,{T}]$. 
%This solution is unique and depends continuously on the initial conditions.
The assertions of Theorem \ref{THEOREM_EXISTENCE_STRONG_SOLUTIONS} remain valid 
if in  (\ref{STRONG_PE_OCEAN}) 
the eddy operators  $\KISO, \KGM$ are replaced by their small density slope approximations $\KISOSMALL, \KGMsmall$, defined in
(\ref{EDDY_OPERATORS_SMALL}).
\end{corollary}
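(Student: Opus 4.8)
The plan is to revisit the proof of Theorem \ref{THEOREM_EXISTENCE_STRONG_SOLUTIONS} and to verify that every structural ingredient on which it rests has an exact analog when $\KISO,\KGM$ are replaced by $\KISOSMALL,\KGMsmall$. The three properties that drive the full-tensor argument are: uniform ellipticity of the symmetric diffusion tensor (Lemma \ref{LEMMA_PROP_REDIOPERATOR}), skew-symmetry of the advection tensor, and the spatial and temporal regularity of the coefficients, which are assembled from the regularized density slopes (Lemmas \ref{LEMMA_DIFFERENCES} and \ref{LEMMA_ELLIPTIC_REG_REDI}). Since $\KISOSMALL$ and $\KGMsmall$ are built from the same regularized slopes $\SL=\SL(\rhobar)$ and are structurally simpler (no $(1+\SL^2)^{-1}$ prefactor, vanishing off-diagonal $2\times2$ block), I expect each ingredient to transfer, several of them with simplifications.

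First I would establish the ellipticity analog of Lemma \ref{LEMMA_PROP_REDIOPERATOR}. Completing the square gives, for $\xi\in\mathbb{R}^3$,
\begin{equation*}
\xi^T\KISOSMALL\,\xi = K_I\big[(\xi_1+\SLxsmall\xi_3)^2+(\xi_2+\SLysmall\xi_3)^2\big]+K_D\xi_3^2 = |\mathbb{D}^{1/2}A\xi|^2,
\end{equation*}
where $\mathbb{D}=\mathrm{diag}(K_I,K_I,K_D)$ and $A$ is the shear matrix equal to the identity except for the entries $A_{13}=\SLxsmall$, $A_{23}=\SLysmall$. Since $|\SLsmall|\leq r$, the matrix $A$ is boundedly invertible, so the smallest singular value of $\mathbb{D}^{1/2}A$ is bounded below by a constant $\mu'=\mu'(K_I,K_D,r)>0$; this yields coercivity and continuity of $\REDIOPSMALL$ on $\VT\times\VS$, exactly as in Lemma \ref{LEMMA_PROP_REDIOPERATOR}. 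The advection tensor $\KGMsmall$ remains skew-symmetric, hence $\nabla_3 C\cdot\KGMsmall\nabla_3 C=0$ pointwise and $\GMOPSMALL$ drops out of every energy identity, precisely as $\GMOP$ does in the full case; in particular the boundedness of tracers and density (Lemma \ref{LEMMA_TRACER_BOUNDED}) is unaffected.

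Next I would transfer the coefficient regularity. The small slope $\SLsmall$ is obtained from $\SL$ by truncation at level $r$; to preserve $H^2$- and time-regularity I would read the cut-off in \eqref{REDI_TENSOR_SMALL_SLOPE} as a smooth tapering $\SLsmall=\SL\,\chi_r(|\SL|)$, with $\chi_r\in C^\infty$ of the type \eqref{SLOPE_DEF_1_TAPERING}. The Leibniz rule then expresses $D^l\SLsmall$ through $D^l\SL$ and the bounded factors $\chi_r,\chi_r'$, so the bounds of Lemma \ref{LEMMA_DIFFERENCES} — namely $\SLsmall\in L^\infty([0,T],H^2(\Omega))$, $\partial_t\SLsmall\in L^\infty([0,T],L^2(\Omega))$, and the Lipschitz estimate $\|\SLsmall(\rhobar_1)-\SLsmall(\rhobar_2)\|_{L^\infty(\Omega)}\leq c\|\rho_1-\rho_2\|_{\Lsq}$ — carry over. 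With these in hand, the $L^\infty$-bound $\|\nabla_3 k_{ij}\|_{L^\infty}\leq c_0\|\rho'\|_{\Lsq}$ used in Lemma \ref{LEMMA_ELLIPTIC_REG_REDI} holds for the entries of $\KISOSMALL$ as well (they are convolutions of density against derivatives of $j_\eta$, multiplied by bounded taperings), so the elliptic estimate \eqref{REDI_OP_SOLVE3} holds for $\REDIOPSMALL$, and the $L^\infty_tH^1_x$-bound on the eddy transport velocity $\vbolus=-\partial_z(\kappa\SLsmall)$, $\wbolus=\nabla\cdot(\kappa\SLsmall)$ follows from the $H^2$-bound on $\SLsmall$.

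With the ellipticity, skew-symmetry, and coefficient-regularity analogs secured, the Galerkin scheme, the $L^\infty_tH^1_x$ a priori estimates for $(v,\theta,S)$, the time-derivative bounds feeding the Aubin--Lions compactness, the passage to the limit, and the uniqueness and continuous-dependence argument proceed exactly as in the proof of Theorem \ref{THEOREM_EXISTENCE_STRONG_SOLUTIONS}, with the simplifications noted above. I expect the only genuine subtlety — the main obstacle — to be the truncation defining $\SLsmall$: with a hard indicator the level set $\{|\SL|=r\}$ would inject a singular surface term into $\nabla\SLsmall$ and destroy the $H^2$-regularity of the coefficients; replacing it by a smooth tapering, consistent with the regularizations already adopted for $\pi_{\partial\Omega}$ and $\Pi_{s_0,\epsilon_0}$, removes this difficulty, after which the small-slope system is strictly easier to handle than the full tensor.
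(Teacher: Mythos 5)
Your proposal is correct and follows the same overall route as the paper. The paper's own treatment of the small-slope case (given explicitly only as a sketch for the weak-solution analogue, Corollary \ref{THEOREM_EXISTENCE_WEAKSTRONG_SOLUTIONS_SMALL_SLOPE}, and implicitly invoked for the strong case) modifies only the ellipticity statement of Lemma \ref{LEMMA_PROP_REDIOPERATOR} and declares the remainder of the proof of Theorem \ref{THEOREM_EXISTENCE_STRONG_SOLUTIONS} unchanged; your completing-the-square identity $\xi^T\KISOSMALL\,\xi=K_I\big[(\xi_1+\SLxsmall\xi_3)^2+(\xi_2+\SLysmall\xi_3)^2\big]+K_D\,\xi_3^2$ is the same computation the paper carries out with Young's inequality, and your shear-matrix factorization with $|\SLsmall|\leq r$ gives the same coercivity constant $\mu'(K_I,K_D,r)$. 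Where you genuinely go beyond the paper is the truncation (\ref{REDI_TENSOR_SMALL_SLOPE}): you correctly observe that the hard indicator cutoff is discontinuous across the level set $\{|\SL|=r\}$, which would obstruct exactly the ingredients the strong-solution argument consumes beyond ellipticity --- the $H^2$- and time-regularity of the slopes (Lemma \ref{LEMMA_DIFFERENCES}), the constant in the elliptic estimate of Lemma \ref{LEMMA_ELLIPTIC_REG_REDI} (which requires $\|\nabla_3 k_{ij}\|_{L^\infty}$ control), and the $H^1$ bound on the eddy velocity $(\vbolus,\wbolus)$ entering (\ref{PROOF_STRONG_SOLUTION_3})--(\ref{PROOF_STRONG_SOLUTION_3vert}). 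The paper addresses the small-slope case only through the footnote remark that the slopes are ``prescribed bounded,'' which suffices for the weak-solution a priori bounds but is not enough for the coefficient regularity used in Steps 2--3 of the strong proof; your replacement of the indicator by a smooth tapering $\chi_r$ of the type $\Pi_{s_0,\epsilon_0}$, consistent with the regularizations the paper already adopts, is the reading under which the corollary's proof actually closes, and with it your transfer of the Galerkin scheme, the $L^\infty_t H^1_x$ estimates, compactness, and uniqueness matches the paper's intended argument step for step.
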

%%%%%%%%%%%%%%%%%%%%%%%%%%%%%%%%%%%%%%%%%%%%%%%%%%%%%%%%%%%%%%%%%%%%
%%%%%%%%%%%%%%%%%%%%%%%%%%%%%%%%%%%%%%%%%%%%%%%%

%%%%%%%%%%%%%%%%%%%%%%%%%%%%%%%%%%%%%%%%%%%%%%%%%%%%%%%%%%%%%%%%%%%%
%%%%%%%%%%%%%%%%%%%%%%%%%%%%%%%%%%%%%%%%%%%%%%%%
\subsection{Proof of Theorem \ref{THEOREM_EXISTENCE_WEAKSTRONG_SOLUTIONS}}\label{SECT_PROOF_MAIN_RESULT}
%%%%%%%%%%%%%%%%%%%%%%%%%%%%%%%%%%%%%%%%%%%%%%%%

%%%%%%%%%%%%%%%%%%%%%%%%%%%%%%%%%%%%%%%%%%%%%%%%
%\subsection{Proof of Theorem \ref{THEOREM_EXISTENCE_STRONG_SOLUTIONS}}\label{SUBSECT_PROOF_EX_STRONG}
%%%%%%%%%%%%%%%%%%%%%%%%%%%%%%%%%%%%%%%%%%%%%%%%
\begin{proof}
%We prove first existence of strong solutions to the the primitive equations (\ref{PE_OCEAN}) and proceed then with uniqueness and the continuous dependency
%on the initial conditions.
%The proof consists of the following steps: the formulation of an approximative to the primitive equations, 
%the proof of local existence of solutions to the approximative system, a priori estimates to establish global existence in time %of the approximate system 
%and then a compactness argument to transfer the results to (\ref{PE_OCEAN_COMPLETE}). %Finally we prove uniqueness. 
We use a Galerkin approximation of the system (\ref{STRONG_PE_OCEAN}). 
By ${\bf P}_m$ we denote the projection of the velocity space $\VV$ on $\VV^m:=span\{\psi_k: k=1\ldots m \}$ spanned by eigenfunctions of the Stokes operator. The anisotropic Laplacian operators $R_\theta:=-K_I\triangle - K_D\partial_{zz}^2$ and $R_S:=-K_I\triangle - K_D\partial_{zz}^2$
whose respective domains are the closure of $\VT$ and $\VS$ in ${\Htwo}$, with boundary conditions given by (\ref{PE_AO_BOUNDARY_CONDITIONS_TRAC2}), are positive and self-adjoint, such that each has a compact inverse 
(cf. \cite{CaoTiti_2003}). Consequently there exist orthonormal bases
$(\phi_k^\theta)_k $ and $(\phi_k^S)_k $ of $\Lsq$ of eigenfunctions of $R_\theta$ and $R_S$. Denote $\VT^m:=span\{\phi^\theta_k: k=1\ldots m\}$ and $\VS^m:=span\{\phi^S_k: k=1\ldots m\}$. The projection of $\VT$, $\VS$ on $\VT^m,\VS^m$ is denoted by $P_{\VT^m}, P_{\VS^m}$, respectively. 
%%%%%%%%%%%%%%%%%%%%%%%%%%%%%%%%%%%%%%%%%%%%%%%%%
\subsubsection*{Step 1: \it Formulation of Approximative System}\label{APPROX_SYSTEM}
%%%%%%%%%%%%%%%%%%%%%%%%%%%%%%%%%%%%%%%%%%%%%%%%%%
%\begin{equation}\begin{split}\label{REDI_OP_SOLVE6}
%\neutral_m:=\rho_m(-\alphareg_m\nabla \thetareg_m +\betareg_m\nabla \Sreg_m, -\alphareg_m\partial_z \thetareg_m +\betareg_m\partial_z \Sreg_m)\taperbound,
%\end{split}\end{equation}
We approximate  velocity, temperature and salinity in terms of 
projection on the span of the respective basis functions. With $C_m\in\{\theta_m, S_m\}$ this reads 
\begin{equation}\begin{split}\label{GALERKIN_1}
&v_m(x,y,z,t):={\bf P}_mv(x,y,z,t):=\sum_{k=1}^m a_k(t)\psi_k(x,y,z),\\
&C_m(x,y,z,t):=P_m C(x,y,z,t):=\sum_{k=1}^m b_k^C(t)\phi_k^C(x,y,z).
%&\theta_m(x,y,z,t):=P_m\theta(x,y,z,t):=\sum_{k=1}^m b_k(t)\phi_k^\theta(x,y,z),\\ 
%&S_m(x,y,z,t):=P_mS(x,y,z,t):=\sum_{k=1}^m c_k(t)\phi_k^S(x,y,z).
\end{split}\end{equation}
%where ${\bf P}_m$ and $P_m$ are the projections from $(\Lsq)^2$ and $\Lsq$ onto the span of the respective
%basis functions $span\{\psi_k:k=1\ldots m\}$ and  $span\{\phi_k^C:k=1\ldots m\}$. % and $span\{\phi_k^S:k=1\ldots m\}$.
The Galerkin approximation to (\ref{PE_OCEAN}) reads as follows
\begin{subequations}\label{GALERKIN_2}
\begin{align}
&\partial_t v_m=-{\bf P}_m[(v_m\cdot\nabla) v_m] 
+{\bf P}_m\big[\big(\int_{-h}^z\nabla\cdot v_m(\xi)\, d\xi\big)\partial_z v_m\big] 
-f\vec{k}\times v_m \nonumber\\
&-\nabla \int_{-h}^zg\rho_m(x,y,z',t)\,dz' +\nabla (p_s)_m
+\frac{1}{Re_1}{\bf P}_m\triangle\,v_m +\frac{1}{Re_2}{\bf P}_m\partial^2_{zz} v_m,\label{GALERKIN_4}\\
%&\partial_z p_m +g\rho_m=0 \label{GALERKIN_5}\\
%\end{align}
%\begin{align}
&\partial_t C_m=- P_m(v_m\cdot\nabla) C_m 
-P_m\big[\big(\int_{-h}^z\nabla\cdot v_m(\xi)\, d\xi\big)\partial_z C_m\big]\nonumber
-P_m\REDIOP(\rhobar_m)(C_m)\nonumber\\
&-P_m\GMOP(\rhobar_m)(C_m),\label{GALERKIN_6}%\\
%&\partial_t \theta_m=- P_m(v_m\cdot\nabla) \theta_m 
%-P_m\big[\big(\int_{-h}^z\nabla\cdot v_m(\xi)\, d\xi\big)\partial_z \theta_m\big]\nonumber
%-P_m\REDIOP(\rhobar_m)(\theta_m)\nonumber\\
%&-P_m\GMOP(\rhobar_m)(\theta_m),\label{GALERKIN_6}\\
%%%&\nonumber\\
%%\end{align}
%%\begin{align}
%&\partial_t S_m =- P_m(v_m\cdot\nabla) S_m
%-P_m\big[\big(\int_{-h}^z\nabla\cdot v_m(\xi)\, d\xi\big)\partial_z S_m\big]
%-P_m\REDIOP(\rhobar_m)(S_m)\nonumber\\
%&-P_m\GMOP(\rhobar_m])(S_m),\label{GALERKIN_7}%\\
%%&\nonumber\\
%%&\rho_m=\rho(\theta_m,S_m,z),\label{GALERKIN_8}
\end{align}
\end{subequations}
with initial conditions $v_m(t=0):={\bf P}_m v_0, C_m(t=0):=P_m C_0$ %,  $S_m(t=0):=P_m S_0$.
and with density $\rho_m:=P_m\rho(\theta_m, S_m)$ calculated from $\theta_m,S_m$. 
%From $\rho_m$ we calculate the regularized density $\rhobar_m$ according to Definition \ref{REG_NEUTRAL}. %, with $\rho$ replaced by $\rho_m$.
% in (\ref{GALERKIN_4}) is defined in terms of the Galerkin approximation for $\theta_m,S_m$.
%as well as thermal expansion and saline contraction $\alpha_m,\beta_m$ (see (\ref{GIBBS_SPECIFIC_rho}), (\ref{GIBBS_SPECIFIC_alpha_beta})) are  defined in terms of the Galerkin approximation for $\theta_m,S_m$
%\begin{equation}\begin{split}\label{GALERKIN_10a}
%\rho_m:=\rho(\theta_m, S_m). %,\quad \alpha_m:=\alpha(\theta_mS_m),\quad\text{and}\quad \beta_m=\beta (\theta_m,S_m).
%\end{split}\end{equation}
The Galerkin approximation $\SL_m$ of the isoneutral density slope is defined as
\begin{equation}\begin{split}\label{REDI_OP_SOLVE6}
&\SL_m:=P_m\frac{\nabla_h\rhobar_{m}}{\partial_z\rhobar_{m}}\Pi_{s_0,\epsilon_0}(|\partial_z\rhobar_m|)\pi_{\partial\Omega}.
\end{split}\end{equation} %with tapering functions $\Pi_{s_0,\epsilon_0}$ from Definition \ref{REG_NEUTRAL}. 

%%%%%%%%%%%%%%%%%%%%%%%%%%%%%%%%%%%%%%%%%%%%%%%%%%
%\newpage
%%%%%%%%%%%%%%%%%%%%%%%%%%%%%%%%%%%%%%%%%%%%%%%%%%
\subsubsection*{Step 2: \it Local Existence in Time of Approximative System}\label{LOCAL_EXISTENCE}%. (\ref{GALERKIN_2}).}\label{LOCAL_EXISTENCE}
%%%%%%%%%%%%%%%%%%%%%%%%%%%%%%%%%%%%%%%%%%%%%%%%%
The  j'th component of the Galerkin approximation is for $t\in [0,T]$ given by
%\begin{subequations}\label{GALERKIN_10}
%\begin{align}
\begin{equation}\label{GALERKIN_10}
\frac{d}{dt}a_j(t)=G_v^{(j)}(t),\quad\text{and}\quad%\label{GALERKIN_11}\\
\frac{d}{dt}b_j^C(t)=G_C^{(j)}(t) ,%\label{GALERKIN_12}%\\
%\frac{d}{dt}c_j(t)&=G_S^{(j)}(t) ,\label{GALERKIN_13}%\\
%\frac{d}{dt}d_j(t)&=G_\rho^{(j)}(t),\label{GALERKIN_14}\\
%\frac{d}{dt}e_j(t)&=\big(d_j(t)-d_j(t-\tau)\big), HG_\rho^{(j)}(t),\label{GALERKIN_15}
%%\frac{d}{dt}e_j(t)&=\int_{t-\tau}^td_j(s)\, ds\, HG_\rho^{(j)}(t),\label{GALERKIN_15}
\end{equation}
%\end{align}
%\end{subequations}
%where $\alpha_j:=\frac{\partial \rho_j}{\partial \theta_j}, \beta_j:=\frac{\partial \rho_j}{\partial S_j}$
with right-hand side
\begin{align}
G_v^{(j)}(t)
:=&-\sum_{k,l=1}^m a_k(t)a_l(t)\big<(\psi_k\cdot\nabla)\psi_l, \psi_j\big>_{{\bf L}^2} 
%-g\sum_{k,1}^m \big<\nabla \int_{-h}^z\rho_m(x,y,z',t)\,dz',\psi_j \big>_{{\bf L}^2} \nonumber \\
-g\sum_{k}^m d_k\big<\nabla \int_{-h}^z\psi_k(x,y,z')\,dz',\psi_j \big>_{{\bf L}^2} \nonumber \\
& - \sum_{k,l=1}^m a_k(t)a_l(t)\big<\big(\int_{-h}^z\nabla\cdot\psi_l(\xi)\,d\xi\big)\partial_z\psi_k,\psi_j\big>_{{\bf L}^2} \nonumber\\
&-\frac{1}{Re_1}\sum_{k=1}^m a_k(t)\big<\nabla\psi_k,\nabla\psi_j\big>_{{\bf L}^2} 
- \frac{1}{Re_2}\sum_{k=1}^m a_k(t)\big<\partial_z \psi_k,\partial_z\psi_j\big>_{{\bf L}^2},\label{GALERKIN_411}\\
%%%%%%%%%%%%%%%%%%%%%%
%\end{align}
%\begin{align}
%\frac{d }{dt}d_j(t)
%&=-\sum_{k=1}^m  a_k(t)d_k(t)\big< (\psi_k\cdot\nabla)\phi_k^\theta,\phi_j\big>_{\Lsq} \nonumber\\
%&-\sum_{k=1}^m a_k(t)d_k(t)\big< \big(\int_{-h}^z\nabla\cdot\psi_k(\xi)\,d\xi\big) \partial_z \phi_k^\theta,\phi_j\big>_{\Lsq}\nonumber\\
%&-\sum_{k=1}^m d_k(t)\big<  (\KISO(\neutral_k)+\KGM(\neutral_k))\nabla \phi_k^\theta,\nabla\phi_j\big>_{\Lsq}\nonumber\\
%&-k_\theta\sum_{k=1}^m d_k(t)\big<\phi_k^\theta(z=0),\phi_j(z=0)\big>_{\Lsq}
%,\label{GALERKIN_12}\\
%%%%%%%%%%%%%%%%%%%%%
%\end{align}
%\begin{align}
G_C^{(j)}(t)
:=&-\sum_{k=1}^m  a_k(t)b_k^C(t)\big< (\psi_k\cdot\nabla)\phi_k^C,\phi_j^C\big>_{\Lsq}-k_C\sum_{k=1}^m b_k^C(t)\big<\phi_k^C,\phi_j^C\big>_{L^2(\Gamma_u)}\nonumber\\
%\end{align}
%\begin{align}
&-\sum_{k=1}^m a_k(t)b_k^C(t) \big< \big(\int_{-h}^z\nabla\cdot\psi_k(\xi)\,d\xi\big) \partial_z \phi_k^C,\phi_j^C\big>_{\Lsq}\nonumber\\
&-\sum_{k=1}^m b_k(t)\big< (\KISO(\rhobar_k)+\KGM(\rhobar_k))\nabla \phi_k^C,\nabla\phi_j^C\big>_{\Lsq},\label{GALERKIN_412}%\\
%G_\theta^{(j)}(t)
%:=&-\sum_{k=1}^m  a_k(t)b_k(t)\big< (\psi_k\cdot\nabla)\phi_k^\theta,\phi_j^\theta\big>_{\Lsq}-k_\theta\sum_{k=1}^m b_k(t)\big<\phi_k^\theta,\phi_j^\theta\big>_{L^2(\Gamma_u)}\nonumber\\
%&-\sum_{k=1}^m a_k(t)b_k(t) \big< \big(\int_{-h}^z\nabla\cdot\psi_k(\xi)\,d\xi\big) \partial_z \phi_k^\theta,\phi_j^\theta\big>_{\Lsq}\nonumber\\
%&-\sum_{k=1}^m b_k(t)\big< (\KISO(\rhobar_k)+\KGM(\rhobar_k))\nabla \phi_k^\theta,\nabla\phi_j^\theta\big>_{\Lsq},\label{GALERKIN_412}\\
%%\end{align}
%%\begin{align}
%G_S^{(j)}(t)
%:=&-\sum_{k=1}^m  a_k(t)c_k(t)\big< (\psi_k\cdot\nabla)\phi_k^S,\phi_j^S\big>_{\Lsq}\nonumber\\
%&-\sum_{k=1}^m a_k(t)c_k(t) \big< \big(\int_{-h}^z\nabla\cdot\psi_k(\xi)\,d\xi\big) \partial_z \phi_k^S,\phi_j^S\big>_{\Lsq}\nonumber\\
%&-\sum_{k=1}^m c_k(t)\big< (\KISO(\rhobar_k)+\KGM(\rhobar_k))\nabla \phi_k^S,\nabla\phi_j^S\big>_{\Lsq},\label{GALERKIN_413}%\\
%%\end{align}
%%\begin{align}
%%G_\rho^{(j)}(t)
%%:=&\alpha_jG_\theta^{(j)}(t)+\beta_jG_S^{(j)}. \label{GALERKIN_414}%\\
%% G_\rhobar^{(j)}(t) :=&HG_\rho
%%% \big(\alpha_jG_\theta^{(j)}(t)+\beta_jG_S^{(j)}(t)\big).  \label{GALERKIN_415}
\end{align}
where $\big<\cdot,\cdot\big>_\Lsq$ denotes the $L^2$-inner product. 
Equation (\ref{GALERKIN_10}) forms a system of Ordinary Differential Equations that has a unique solution provided the right hand side is locally Lipschitz continuous. The Lipschitz continuity of $G_v^{(j)}$ is classical, as well as the Lipschitz continuity of the advection terms in 
$G_\theta^{(j)}$ and $G_S^{(j)}$. To establish the Lipschitz-property of the eddy matrices $\KISO, \KGM$ it is sufficient to show the Lipschitz continuity of the mapping
\begin{equation}\begin{split}\label{Lipschitz_1}
&(b^\theta,b^S)\mapsto \big< L_\sigma(\rhobar_k(b^\theta, b^S))\partial_\mu \phi_k,\partial_\nu\phi_j\big>_{\Lsq},\\
%%&(b,c)\mapsto \big< \KISO(\rhobar_k(b, c))\nabla \phi_k,\nabla\phi_j\big>_{\Lsq}\\
%%\text{and }\ &(b,c)\mapsto \big< (\KGM(\rhobar_k(b,c)))\nabla \phi_k,\nabla\phi_j\big>_{\Lsq},
%%%&||\SL(\rhobar_k^{(1)})-\SL(\rhobar_k^{(2)})||_{\Lsq}\leq c ||\rhobar_k^{(1)}-\rhobar_k^{(2)}||_{\Lsq}.
%%&|\KISO(\rhobar_k^{(1)})-\KISO(\rhobar_k^{(2)})|\leq k_1 |\rhobar_k^{(1)}-\rhobar_k^{(2)}|,\\
%%\text{and }\ &|\KGM(\rhobar_k^{(1)})-\KGM(\rhobar_k^{(2)})|\leq k_2 |\rhobar_k^{(1)}-\rhobar_k^{(2)}|.
\end{split}\end{equation} 
where $L_\sigma\in\{\SLx, \SLy\}$, $\partial_\mu,\partial_\nu\in\{\partial_x,\partial_y,\partial_z\}$ and
$b^\theta:=(b_l^\theta(t))_{l=1}^j,b^S:=(b_l^S(t))_{l=1}^j$ are the expansion coefficients of potential temperature and salinity
in terms of their respective basis functions $\phi_k\in\{ \phi_k^\theta,\phi_k^S\}$. 
Let $b^{\theta,(1)}$ %:=(b_l^{\theta,(1)}(t))_{l=1}^j$, 
$b^{S,(1)}$%:=(b_l^{S,(1)}(t))_{l=1}^j$ 
and $b^{\theta,(2)}$%:=(b_l^{\theta,(2)}(t))_{l=1}^j$, 
$b^{S,(2)}$%:=(b_l^{S,(2)}(t))_{l=1}^j$ 
be two such expansion coefficients and $\rho^{(1)}_k, \rho^{(2)}_k$ the associated densities.
%where $L_\sigma\in\{\SLx, \SLy\}$, $\partial_\mu,\partial_\nu\in\{\partial_x,\partial_y,\partial_z\}$ and where
%$b^\theta:=(b_l^\theta(t))_{l=1}^j,b^S:=(b_l^S(t))_{l=1}^j$ are the expansion coefficients of potential temperature and salinity
%in terms of their respective basis functions $\phi_j\in\{ \phi_k^\theta,\phi_k^S\}$. 
%Let $b^{\theta,(1)}:=(b_l^{\theta,(1)}(t))_{l=1}^j$, $b^{S,(1)}:=(b_l^{S,(1)}(t))_{l=1}^j$ and $b^{\theta,(2)}:=(b_l^{\theta,(2)}(t))_{l=1}^j$, $b^{S,(2)}:=(b_l^{S,(2)}(t))_{l=1}^j$ be two such expansion coefficients and $\rho^{(1)}_k, \rho^{(2)}_k$ be the two associated densities.
%We recall the elliptic regularity of the Helmholtz operator (\cite{BREZIS}, Theorem 9.6) which shows that the solution $\rhoreg$ of the Helmholtz
%equation (\ref{REG_NEUTRAL_TS}) satisfies $||\rhoreg||_\Hone\leq c ||\rho||_\Lsq$. 
From definition  (\ref{SLOPE_DEF_1}) of $\SL$ we obtain with the continuity of $\Pi_{s_0,\epsilon_0}$, H\"olders inequality
and the convolution properties 
\begin{equation*}\begin{split}%\label{Lipschitz_2}
&\big|\big< \big(L_\sigma(\rhobar_k^{(1)}(b^{\theta,(1)}, b^{S,(1)}))-L_\sigma(\rhobar_k^{(2)}(b^{\theta,(2)}, b^{S,(2)})\big)\partial_\mu  \phi_k,
\partial_\nu \phi_j\big>_{\Lsq}\big|\\
%&=
%\big|\big< 
%P_k(\frac{\nabla_h\rhobar_k^{(1)}}{\partial_z\rhobar_k^{(1)}})
%\big(\Pi_{s_0,\epsilon_0}(|\partial_z\rhobar_k^{(1)}|)- \Pi_{s_0,\epsilon_0}(|\partial_z\rhobar_k^{(2)}|)\big)\pi_{\partial\Omega}
%\partial_\mu \phi_k,
%\partial_\nu \phi_j\big>_{\Lsq}\\
%&+
%\big<P_k\big(\frac{\nabla_h\rhobar_k^{(1)}}{\partial_z\rhobar_k^{(1)}}
%-\frac{\nabla\rhobar_k^{(2)}}{\partial_z\rhobar_k^{(2)}}\big) 
%\Pi_{s_0,\epsilon_0}(|\partial_z\rhobar_k^{(2)}|)\pi_{\partial\Omega}\partial_\mu  \phi_k,\partial_\nu\phi_j\big>_{\Lsq}\big|\\
&=
\big|\int_\Omega
P_k(\frac{\nabla_h\rhobar_k^{(1)}}{\partial_z\rhobar_k^{(1)}})
\big(\Pi_{s_0,\epsilon_0}(|\partial_z\rhobar_k^{(1)}|)- \Pi_{s_0,\epsilon_0}(|\partial_z\rhobar_k^{(2)}|)\big)\pi_{\partial\Omega}
\partial_\mu \phi_k\partial_\nu \phi_jdxdydz\\
%\end{split}\end{equation*}
%\begin{equation}\begin{split}\label{Lipschitz_2}
&+\int_\Omega 
P_k\big(\frac{\partial_z\rhobar_k^{(2)}(\nabla_h\rhobar_k^{(1)}-\nabla_h\rhobar_k^{(2)})
+\nabla_h\rhobar_k^{(2)}(\partial_z\rhobar_k^{(2)})-\partial_z\rhobar_k^{(1)})  }{\partial_z\rhobar_k^{(1)}\partial_z\rhobar_k^{(2)}}\big)\times\\
&\times\Pi_{s_0,\epsilon_0}(|\partial_z\rhobar_k^{(2)}|)\pi_{\partial\Omega}
\partial_\mu  \phi_k\partial_\nu\phi_j dxdydz\big|\\
%\end{split}\end{equation*}
%\begin{equation}\begin{split}\label{Lipschitz_2}
&\leq
c|| \nabla\rhobar_k^{(1)}||_{L^4(\Omega)}
||\partial_\mu\phi_k||_{L^4(\Omega)}  ||\partial_\nu \phi_j||_{L^4(\Omega)}
||\partial_z(\rhobar_k^{(1)}-\rhobar_k^{(2)})||_{L^4(\Omega)} \\
&+c|| \nabla\rhobar_k^{(2)}||_{L^4(\Omega)}
 ||\partial_\mu\phi_k||_{L^4(\Omega)} ||\partial_\nu \phi_j||_{L^4(\Omega)}
  ||\nabla(\rhobar_k^{(1)}-\rhobar_k^{(2)})||_{L^4(\Omega)}\\% &\leq K  ||\nabla(\rhobar_k^{(1)}-\rhobar_k^{(2)})||_{\Hone}\\
\end{split}\end{equation*}
\begin{equation}\begin{split}\label{Lipschitz_2}
&\leq
c(|| \nabla\phi_k^{(1)}||_{L^4(\Omega)}+|| \nabla\phi_k^{(2)}||_{L^4(\Omega)})
||\partial_\mu\phi_k||_{L^4(\Omega)}  ||\partial_\nu \phi_j||_{L^4(\Omega)} ||\nabla_3\phi_k||_{L^4(\Omega)}\times\\
&\times (||b^{\theta,(1)}-b^{\theta,(2)}|| +||b^{S,(1)}-b^{\theta,(2)}||) 
\leq\ell(||b^{\theta,(1)}-b^{\theta,(2)}|| +||b^{S,(1)}-b^{\theta,(2)}||)
\end{split}\end{equation}
where $c,\ell>0$ depends on the $H^2$-norms of  $\phi_j\in H^2(\Omega)$ and on the coefficients of the equation of state and where the regularized density was defined as convolution in (\ref{REG_NEUTRAL_TS}). In the last step 
we have used Lemma \ref{LEMMA_DENSITY_DIFF}. We note that due to the regularity of the expansion functions $\phi_j$ the density regularization
is not required for (\ref{Lipschitz_2}). 

From (\ref{Lipschitz_2}) follows the local Lipschitz continuity of the mapping (\ref{Lipschitz_1}). This implies the (local) Lipschitz continuity of $\KISO,\KGM$. From the Picard Theorem follows that for all $m$ a unique local solution to (\ref{GALERKIN_10}) on time intervals $[0, t_m]$ exists.  
%%%%%%%%%%%%%%%%%%%%%%%%%%%%%%%%%%%%%%%%%%%%%%%%%
\subsubsection*{Step 3: \it A Priori Bounds on Approximate System}\label{APRIORI_WEAK_SOLUTION}
%%%%%%%%%%%%%%%%%%%%%%%%%%%%%%%%%%%%%%%%%%%%%%%%%
A priori bounds for velocity, temperature and salinity in $L^\infty([0,T],L^{2})$ and $L^2([0,T],H^1)$
follow analogously to \cite{CaoTiti} with only minor modifications by invoking Lemmas \ref{LEMMA_PROP_REDIOPERATOR} and
\ref{LEMMA_TRACER_BOUNDED}. Steps 3a and 3b below are included for completeness. 
%%%%%%%%%%%%%%%%%%%%%%%%%%%%%%%%%%%%%%%%%%%%%%%%%
\subsubsection*{Step 3a: \it $L^\infty([0,T],L^{2})$- and $L^2([0,T],H^1)$-bound on tracer}\label{L_BOUND_TRACER_GMR}
%%%%%%%%%%%%%%%%%%%%%%%%%%%%%%%%%%%%%%%%%%%%%%%%%
 Taking the $L^2$ scalar product of the tracer equation 
(\ref{GALERKIN_6}) with $C_m\in\{\theta_m,S_m\}$ yields after integration-by-parts and with the tracer boundary 
condition (\ref{PE_AO_BOUNDARY_CONDITIONS_TRAC2}),
\begin{equation}\begin{split}\label{PROOF_THM_1_2_GM}
%&\frac{1}{2r}\frac{d}{dt}||C||^{2r}_{2r}
%+\int_\Omega \KISO\nabla_3 C\,\nabla_3\big[C_m^{2r-1}\big]\, dxdydz
%+\alpha||C(z=0)||_{2r}^{2r}\\
%=
&\frac{1}{2}d_t||C_m||^{2}_{\Lsq}
+
\int_\Omega \big(\KISO(\rhobar_m)\nabla_3 C_m\big)\cdot \nabla_3 C_m\, dxdydz
+k_C||C_m(z=0)||_{L^2(\Gamma_u)}^2=0,
%\leq& ||(Q_C)_m||_{2r}||C_m||_{2r}^{2r-1}
%&\leq \frac{1}{2r}||(Q_C)_m||_{2r}^{2r} + \frac{2r-1}{2r}||C_m||_{2r}^{2r},
\end{split}\end{equation}
where the  tracer advection term vanishes due to the incompressibility of $v_m$ and the
eddy advection term disappears due to the skewness of $\KGM$. % (see Lemma \ref{PROP_GM}). 
%The boundary term vanishes in the case of salinity ($C=S$), due to (\ref{PE_AO_BOUNDARY_CONDITIONS_TRAC2}).
This implies with Lemma \ref{LEMMA_PROP_REDIOPERATOR}
%that
%%and with the skewness property (\ref{PROP_GM_3}) in Lemma \ref{PROP_GM} that 
%\begin{equation}\begin{split}\label{PROOF_THM_1_3_GM}
%&d_t||C_m||_{\Lsq}\leq 0.
%\end{split}\end{equation}
%With the Gronwall inequality and with $||C_m(t=0)||_{2}\leq ||C(t=0)||_{2}$ for all $m\in\mathbb{N}$, %and $||(Q_C)_m||_{2r}\leq ||Q_C||_{2r}$ 
%we conclude that
and with the Gronwall inequality
\begin{equation}\begin{split}\label{PROOF_THM_1_4_GM}
&||C_m(t)||_{\Lsq}
\leq ||C(t=0)||_{\Lsq}%+||(Q_C)||_{2r}\big]e^t
=:K_1.
%:L_{||C||_{2r}}(t),
\end{split}\end{equation}
%where $K_1(t)$ is bounded on $[0,{T}]$. 
%We note that the upper bound is uniform with respect to $m$, and grows with time, but stays finite for arbitrary times.\\
From (\ref{PROOF_THM_1_2_GM}) follows after integration with respect to time and with Lemma \ref{LEMMA_PROP_REDIOPERATOR} that
%, Lemma \ref{LEMMA_TRACER_BOUNDED}
%and due to $||C_m(t=0)||_{\infty}\leq ||C(t=0)||_{\infty}$ since $C(t=0)\in \LinftyC$, 
%\todo[inline]{Edriss, you did not like the inequality abobe in l 605, 
%$||C_m(t=0)||_{\infty}\leq ||C(t=0)||_{\infty}$, but I think it is ok, since $C(t=0)\in L^\infty$ by assumption, and then the $L^\infty$-norm of $P_mC(t=0)$ can not exceed $||C(t=0)||_\infty$, since we are truncating the expansion in terms of Eigenfunctions of the Laplacian.}
%that
%\begin{equation}\begin{split}\label{PROOF_THM_1_5_0_GM}
%&\int_0^t\bigg[\int_\Omega \big(\KISO(\rhobar_m)\nabla_3 C_m\big)\cdot\nabla_3 C_m\, dxdydz
%%+k_C||C_m(z=0)||_{\Lsq}^{2}
%\bigg]ds
%\leq
%||C_m(t)||_{\Lsq}^{2}
%\leq 
%%|\Omega|\,||C(t=0)||_\infty t
%K_1(t).
%\end{split}\end{equation}
%With Lemma \ref{LEMMA_PROP_REDIOPERATOR} follows
\begin{equation}\begin{split}\label{PROOF_THM_1_5_0_GMc}
&\int_0^t(\int_\Omega 
|\nabla_3 C_m|^2dxdydz)\,ds\leq K_2,
\end{split}\end{equation}
with $K_2(t):=\frac{K_1}{\mu}$, $\mu$ is the constant from Lemma \ref{LEMMA_PROP_REDIOPERATOR}.
%%%%%%%%%%%%%%%%%%%%%%%%%%%%%%%%%%%%%%%%%%%%%%%%
\subsubsection*{Step 3b: \it $L^\infty(T,L^{2})$- and $L^2(T,H^1)$-bound on Velocity}
%%%%%%%%%%%%%%%%%%%%%%%%%%%%%%%%%%%%%%%%%%%%%%%%
The following estimate can be derived as in \cite{CaoTiti} (see pp. 253-254)
\begin{equation}\begin{split}\label{PROOF_THM_1_13}
&||v_m(t)||_{\Lsq}^2
+\int_0^t \frac{1}{Re_1}||\nabla v_m(s)||_{\Lsq}^2+\frac{1}{Re_2}||\partial_z v_m(s)||_{\Lsq}^2ds\\
&+
||\theta_m(t)||_{\Lsq}^2
+\mu\int_0^t\big[||\nabla_3\theta_m||_{\Lsq}^2+k_\theta||\theta_m(z=0)||_{2}^{2}\big]ds\\
&+
||S_m(t)||_{\Lsq}^2
+\mu\int_0^t||\nabla_3 S_m||_{\Lsq}^2ds\\
&\leq 
%||v_m(t=0)||_{\Lsq}^2e^{-\frac{\mathbb{K}_{\bf v} }{C_{M}}t}
%+\frac{(gh)^2C_{M}}{\mathbb{K}_{\bf v}^2}\mathcal{L}_{||\rho||_{2}^{2}}
%+
%\frac{ (gh)^2C_{M}\mathcal{L}_{||\rho||_{2}^{2}}}{\mathbb{K}_{\bf v}}t+(h||\vb_0||_{\Lsq}^2+||\vc_0||_{\Lsq}^2)\\
%&=
||v(t=0)||_{\Lsq}^2%e^{-\frac{1 }{C_{M}Re_1}t} <- less than 1
+(gh)^2C_{M}Re_1^2K_1t%\mathcal{L}_{||\rho||_{2}^{2}}(1+t)
+h||v_0||_{\Lsq}^2
%&+K_1(t) + K_2(t)
=:K_3(t),
\end{split}\end{equation}
where $K_3(t)$ is bounded on $[0,{T}]$. This shows that the approximate solutions $(v_m,\theta_m,S_m)$ exist on the time interval $[0,{T}]$.
%%%%%%%%%%%%%%%%%%%%%%%%%%%%%%%%%%%%%%%%%%%%%%%%%
%%%%%%%%%%%%%%%%%%%%%%%%%%%%%%%%%%%%%%%%%%%%%%%%%

%%%%%%%%%%%%%%%%%%%%%%%%%%%%%%%%%%%%%%%%%%%%%%%%%
%%%%%%%%%%%%%%%%%%%%%%%%%%%%%%%%%%%%%%%%%%%%%%%%%
\subsubsection*{Step 4: \it Bound on the Time Derivatives in $L^1([0,T], H^{-2}(\Omega))$}\label{LIMIT_TIMEDERIV_WEAK}
%%%%%%%%%%%%%%%%%%%%%%%%%%%%%%%%%%%%%%%%%%%%%%%%%
For the velocity equation it is well known that $(\partial_t v_m)_m$ is uniformly bounded in $L^{4/3}([0,T], H')$, where $H':=H^{-2}(\Omega)$ is the dual of $\Htwo$ (see e.g. \cite{TEMAM_ZIANE}, sect. 2.3). 
%We show that the sequence $\partial_t C_m$ is uniformly bounded in $L^2([0,T], H')$, where $H':=H^{-2}(\Omega)$ is the dual of $\Htwo$. 
For $\phi\in\Htwo$ it holds%Taking the scalar product in $L^2([0,T], \Lsq)$ of  the tracer equation with  yields 
%with the inequalities of Cauchy-Schwarz and Young applied to the isoneutral term
\begin{equation}\begin{split}\label{PROOF_STRONG_SOLUTION_12_WEAK}
&\int_0^t\big<\partial_t C_m(s), \phi\big>ds
\leq
|\int_0^t\big<\REDIOP(\rhobar_m(s))(C_m(s))+\GMOP(\rhobar_m(s))(C_m(s)), \phi\big>\, ds|\\
%+|\int_0^t\big<\GMOP(\rhobar_m(s))(C_m(s)), \phi\big>ds|\\
&+\big|\int_0^t\big(\int_\Omega
\big( (v_m\cdot\nabla) C_m +w_m\partial_zC_m
%&-\nabla\cdot\big(\int_{-h}^z v_m(x,y,\xi,s)\, d\xi\big)\partial_zC_m(x,y,z,s)
\big)
\phi\, dxdydz\big)ds\big|.
\end{split}\end{equation}
For the first term on the right-hand side we find with Lemma \ref{LEMMA_PROP_REDIOPERATOR}
\begin{equation}\begin{split}\label{PROOF_STRONG_SOLUTION_14_WEAK}
|\big<\REDIOP(\rhobar_m)(C_m), \phi\big>|\leq M||\nabla C_m||_\Lsq||\nabla \phi||_\Lsq.
 \end{split}\end{equation}
 The second term on the right-hand side is estimated with H\"olders inequality\footnote{For weak solutions the control of the $L^3$-norm of the density slopes $\SL$ requires regularization of the density. In the small slope approximation this is not necessary as it is prescribed that the slopes are bounded.}  
\begin{equation}\begin{split}\label{PROOF_STRONG_SOLUTION_15}
&|\big<\GMOP(\rhobar_m)(C_m), \phi\big>|=\big<\KGM(\rhobar_m)\nabla C_m, \nabla\phi\big>|\\
%&=|\int_\Omega\SLx(\rhobar_m)\partial_z C_m\partial_x\phi +\SLy(\rhobar_m)\partial_z C_m\partial_y\phi
%+\SLx(\rhobar_m)\partial_x C_m\partial_z\phi\, dxdydz |\\
&\leq c||\SL(\rhobar_m)||_{L^3(\Omega)}    ||\nabla C_m||_{\Lsq}  ||\nabla\phi|| _{L^6(\Omega)}\\
%&\leq c||\nabla_h\rhobar_m||_{L^3(\Omega)}    ||\nabla C_m||_{\Lsq}   ||\nabla\phi|| _{L^6(\Omega)}\\
%&\leq c||\rhobar_m||_{\Hone}^{1/2}||\rhobar_m||_{\Htwo}^{1/2}    ||\nabla C_m||_{\Lsq}   ||\nabla\phi|| _{L^6(\Omega)}\\
&\leq c||\rho_m||_{\Lsq}||\nabla C_m||_{\Lsq}   ||\phi|| _{\Htwo}\\
&\leq c(||\theta_m||_{\Lsq}+||S_m||_\Lsq)||\nabla C_m||_{\Lsq}   ||\phi|| _{\Htwo},
 \end{split}\end{equation}
%where we have used the smoothing effect of the convolution of density $\rho$ with the regularizing kernel $j_\eta$(see e.g. \cite{BREZIS}). 
For the third term on the right-hand side of (\ref{PROOF_STRONG_SOLUTION_12_WEAK}) it 
follows after integration by parts, with the inequalities of H\"older, Young, and Sobolev's embedding theorem
%with H\"olders inequality and Lemma \ref{XY} 
 \begin{equation}\begin{split}\label{PROOF_STRONG_SOLUTION_16}
&\big|\int_\Omega
\big( (v_m\cdot\nabla_h) C_m +w_m\partial_zC_m
%%&-\nabla\cdot\big(\int_{-h}^z v_m(x,y,\xi)\, d\xi\big)\partial_zC_m(x,y,z,s)
\big)
\phi(x,y,z)\, dxdydz\big|\\
%&\leq ||v_m||_{\Hone}||C_m||_{\Lsq}^{1/2}||C_m||_{\Hone}^{1/2} ||\phi||_{\Htwo}\\
&\leq c ||v_m||_{\Hone}^2||C_m||_{\Lsq}+||C_m||_{\Hone} ||\phi||_{\Htwo}^2.
%%&=
%%\big|\int_\Omega
%%\big( (v_m\cdot\nabla_h) \phi +w_m\partial_z\phi
%%%&-\nabla\cdot\big(\int_{-h}^z v_m(x,y,\xi)\, d\xi\big)\partial_zC_m(x,y,z)
%%\big)
%%C_m\, dxdydz\big|\\
\end{split}\end{equation}
From (\ref{PROOF_STRONG_SOLUTION_14_WEAK})-(\ref{PROOF_STRONG_SOLUTION_16})
we conclude $(\partial_t C_m)_m$ is bounded uniformly in $L^2([0,T], \Lsq)$.
%For (\ref{PROOF_STRONG_SOLUTION_12_WEAK}) we conclude from the uniform boundedness of 
%$(C_m)_m$ in $L^\infty([0,T], \Lsq)\cap L^2([0,T], \Hone)$ that  (\ref{PROOF_STRONG_SOLUTION_14_WEAK})-
%(\ref{PROOF_STRONG_SOLUTION_16}) are bounded uniformly in $L^2([0,T], \Lsq)$.
 %For estimating the right-hand side we proceed as in Step 5 but replace 
%$\REDIOP(\rhobar_m)(C_m)$ with $\partial_tC_m$. This yields the equivalent of (\ref{PROOF_STRONG_SOLUTION_6})
%\begin{equation}\begin{split}\label{PROOF_STRONG_SOLUTION_14}
%&\int_0^{T}|||\partial_t C_m(s)||^2_{\Lsq}ds\\
%&\leq 
%\int_0^{T}||\REDIOP(\rhobar_m)(C_m)(s)||_{\Lsq}^2ds
%+\int_0^{T}H_m(s)||\nabla_3C_m(s)||_{\Lsq}^2ds,
%\end{split}\end{equation}
%with $H_m:=||\vbolus_m||_{\Honeinthalf}^4
%+||\wbolus_m||_{\Honeinthalf}^4
%+||v_m||_{\Hone}^4
%+||v_m||_{\Hone}^2||v_m||_{\Htwo}^2$.  
%From (\ref{PROOF_STRONG_SOLUTION_10}), (\ref{PROOF_STRONG_SOLUTION_11})
%follows that the  right-hand side of (\ref{PROOF_STRONG_SOLUTION_14}) is bounded uniformly in $m$. 
%Thus, $(\partial_t C_m)_m$ is uniformly bounded in $L^2([0,{T}], \Lsq)$. The uniform boundedness of the time derivative $(\partial_t v_m)_m$ of velocity follows with similar arguments from (\ref{PROOF_STRONG_SOLUTION_1}).
In particular the Galerkin approximation  $(v_m,\theta_m, S_m)$ is a weak solution of (\ref{PE_OCEAN}).
% in the sense of Definition \ref{DEF_STRONGSOLUTION}.
%%%%%%%%%%%%%%%%%%%%%%%%%%%%%%%%%%%%%%%%%%%%%%%%%
\subsubsection*{Step 5: \it Passage to the Limit}\label{LIMIT_WEAK_SOLUTION}
%%%%%%%%%%%%%%%%%%%%%%%%%%%%%%%%%%%%%%%%%%%%%%%%%
%%%%%%%%%%%%%%%%%%%%%%%%%%%%%%%%%%%%%%%%%%%%%%%%
%\subsubsection*{Step 3c: \it Bound on time derivatives}
%%%%%%%%%%%%%%%%%%%%%%%%%%%%%%%%%%%%%%%%%%%%%%%%
The uniform boundedness of $(\partial_t v_m)_m$, $(\partial_t \theta_m)_m$, $(\partial_t S_m)_m$ in
$L^1([0,T], \Lsq)$ and of $(v_m)_m$, $(\theta_m)_m$, $( S_m)_m$ in
$L^2([0,T], \Hone)$ implies with the Lions-Aubin compactness lemma the existence of subsequences 
$(v_n)_n, (\theta_n)_n,( S_n)_n$ such that %that converge in the following sense %$L^2([0,T], \Lsq)$,
\begin{equation}\begin{split}\label{PROOF_STRONG_SOLUTION_17}
v_n&\to v\qquad\ \ \text{ in}\  L^2([0,T], \Lsq)\text{ strongly},\\
(\theta_n, S_n)&\to (\theta,S)\quad\text{in}\ L^2([0,T], \Lsq)\text{ strongly},\\
(\theta_n, S_n)&\to (\theta,S)\quad\text{in}\ L^2([0,T], \Hone)\text{ weakly}.%,\\
%&S_n\to S\quad\text{in}\ L^2([0,T], \Lsq)\text{ strongly}.
\end{split}\end{equation}
These convergence properties allow for all terms in (\ref{STRONG_PE_OCEAN}) to pass to the limit ((see e.g. \cite{TEMAM_ZIANE}, sect. 2.3), except for the eddy terms $\KISO, \KGM$. %This is addressed below. 

The convergence of $(\theta_n)_n, (S_n)_n$ implies 
%with Lemma \ref{LEMMA_DENSITY_DIFF} the convergence of $(\rho_n)_n$ to $\rho$ in $L^2([0,T], \Lsq)$. This implies 
that $(\rhoreg_n)_n$ converges to $\rhoreg$ in $L^2([0,T],H^s(\Omega))$ for $s\geq 2$
and that  $\SL_n(\rhoreg_n)$ converges to $\SL(\rhoreg)$ in $L^2([0,T], H^s(\Omega))$ for $s\geq 1$.
%FromProposition \ref{ELLIPTIC_REG} follows that the spatial-regularized density $(\rhoreg_n)$ converges to $\rhoreg$ in $L^2([0,T], \Htwo)\cap L^2([0,T], H^3(\Omegaint))$. % sThis implies the convergence of $\SL_n$ to $\SL$ in $L^2([0,T], \Htwoint)$. % as well as the convergence of the elements of $\KISO(\neutral_k)$
%From the convergence of the density $(\rho_n)_n$ to $\rho$ follows with Lemma \ref{LEMMA_DIFFERENCES} $ii)$
%that the difference between $\KISO(\rhobar_n)$ and $\KISO(\rhobar)$ converges for almost every $t\in [0,T]$ to zero in $L^\infty(\Omega)$
%%(cf. also (\ref{REDI_OP_SOLVE_GALERKIN_14g}) in the proof of Lemma \ref{LEMMA_ELLIPTIC_REG_REDI}). 
For  $\KISO$ the following estimate\footnote{The estimate below that shows convergence of $\REDIOP$ is valid for the regularized density.} holds for $\phi^\theta\in \HtwoT$
%\begin{equation}\begin{split}\label{PROOF_STRONG_SOLUTION_18}
\begin{align*}
&\int_\Omega \big(\KISO(\rhobar_n)\nabla_3 \theta_n-\KISO(\rhobar)\nabla_3 \theta\big)\cdot\nabla_3\phi^\theta\, dxdydz
+k_\theta\int_{\Gamma_u}(\theta_n-\theta)\phi^\theta\, dxdy\nonumber\\
=&\int_\Omega \big(\KISO(\rhobar_n)\nabla_3 (\theta_n-\theta)\big)\cdot\nabla_3\phi^\theta\, dxdydz
+k_\theta\int_{\Gamma_u}(\theta_n-\theta)\phi\, dxdy\nonumber\\
%\end{align}
%\begin{align}
&+\int_\Omega  \big(\big(\KISO(\rhobar_n)-\KISO(\rhobar)\big)\nabla_3 \theta \big)\cdot\nabla_3\phi^\theta\, dxdydz\nonumber\\
%\leq &
%\int_\Omega \big(\KISO(\rhobar_n)\nabla_3 (\theta_n-\theta)\big)\cdot\nabla_3\phi^\theta\, dxdydz\nonumber\\
%&+\sum_{i,j=1}^3\int_\Omega | \big(k_{iso}^{i,j}(\rhobar_n)- k_{iso}^{i,j}(\rhobar)\big)\partial_{x_i}\theta_n\partial_{x_j}\phi^\theta|\, dxdydz\nonumber\\
%%\end{split}\end{equation}
%%\begin{equation}\begin{split}
%\leq &
%\int_\Omega [\KISO(\rhobar_n)\nabla_3 (\theta_n-\theta)]\cdot\nabla_3\phi^\theta\, dxdydz\nonumber\\
%&+C\sum_{i,j=1}^3\int_\Omega | \big(\SL(\rhobar_n)- \SL(\rhobar)\big)\partial_{x_i}\theta_n\partial_{x_j}\phi^\theta|\, dxdydz\nonumber\\
\leq &
\int_\Omega \big((\KISO(\rhobar_n)\nabla_3 (\theta_n-\theta)\big)\cdot\nabla_3\phi^\theta\, dxdydz\nonumber\\
&+C||\SL(\rhobar_n)-\SL(\rhobar)||_{\Hone}||C_m||_\Hone ||\phi||_\Htwo,\nonumber%\label{PROOF_STRONG_SOLUTION_18}
%\end{split}\end{equation}
\end{align*}
where the right-hand side converges to zero.
% first term on right-hand side converges to zero due to the boundedness of $\KISO$ and the weak convergence of $(\theta_n)_n$ in $\Hone$ and the second term due to the convergence of $(\SL_n)_n$ in $H^1$. %For salinity an analogous result holds.
%From the convergence of $(\theta_n)_n, (S_n)_n$  in $L^2([0,T], \Lsq)$ follows with Lemma \ref{LEMMA_DENSITY_DIFF} the convergence of the density $(\rho_n)_n$ to $\rho$ in $L^2([0,T], \Lsq)$. This 
%From Lemma \ref{LEMMA_DIFFERENCES}
%we infer that the elements $(k_{iso}^{i,j})_{i,j=1,2,3}$ of the matrix $\KISO(\rhobar_n)$ converge to $\KISO(\rhobar)$ in $L^\infty([0,T], L^\infty(\Omega))$. 
%This guarantees that the right-hand side of (\ref{PROOF_STRONG_SOLUTION_16}) converges to zero. Thus, we have established the convergence of 
%$\int_\Omega [(\KISO(\rhobar_n)\nabla_3 \theta_n)]\cdot\nabla_3\phi\, dxdydz$ to $\int_\Omega [(\KISO(\rhobar)\nabla_3 \theta)]\cdot\nabla_3\phi\, dxdydz$. The convergence for salinity follows analogously.
For $\KGM$, the convergence of $(\vbolus_n,\wbolus_n)$ to $(\vbolus,\wbolus)$ in $L^2([0,T], \Lsq)$ follows since the density slopes $\SL_n$ converge in $L^2([0,T], \Hone)$ (cf. (\ref{GM_BOLUS})). For salinity analogous results hold.
%This convergence, together with  
%(\ref{PROOF_STRONG_SOLUTION_15}), (\ref{PROOF_STRONG_SOLUTION_16})  allow to pass to the limit in (\ref{STRONG_PE_OCEAN}) and  shows that the limit $(v,\theta,S)$ is a strong solution of the primitive equations (\ref{PE_OCEAN_COMPLETE}) in the sense of Definition \ref{DEF_STRONGSOLUTION}.
%\begin{equation*}\begin{split}%\label{PROOF_STRONG_SOLUTION_16}
%&\int_\Omega (\KISO(\rhobar_n)\nabla_3 C_n)\cdot\nabla_3\phi\, dxdydz\to \int_\Omega (\KISO(\rhobar)\nabla_3 C)\cdot\nabla_3\phi\, dxdydz,\\
%\text{and }\ &\int_\Omega (\KGM(\rhobar_n)\nabla_3C_n)\cdot\nabla_3\phi\, dxdydz\to \int_\Omega (\KGM(\rhobar)\nabla_3 C)\cdot\nabla_3\phi\, dxdydz.
%\end{split}\end{equation*}
%The same convergence applies to the eddy matrix $\KGM(\neutral_k)$. 
%%%%%%%%%%%%%%%%%%%%%%%%%%%%%%%%%%%%%%%%%%%%%%%%%
\end{proof}
%%%%%%%%%%%%%%%%%%%%%%%%%%%%%%%%%%%%%%%%%%%%%%%%
%%%%%%%%%%%%%%%%%%%%%%%%%%%%%%%%%%%%%%%%%%%%%%%%
\subsection{Proof of Corollary \ref{THEOREM_EXISTENCE_WEAKSTRONG_SOLUTIONS_SMALL_SLOPE}} \label{SECT_PROOF_MAIN_RESULT_SMALL_SLOPE}
%%%%%%%%%%%%%%%%%%%%%%%%%%%%%%%%%%%%%%%%%%%%%%%%
({\it Sketch}.) %The small slope approximation changes the structure of the operator $\KISO$ such that the orthogonal decomposition  in Prop. \ref{PROP_MIXING_TENSOR} is no longer valid. This decomposition is crucial for the proof of ellipticity of $\KISO$ in Lemma \ref{LEMMA_PROP_REDIOPERATOR}, which is crucial for  the proof of Theorem \ref{THEOREM_EXISTENCE_WEAKSTRONG_SOLUTIONS}. The following argument shows that the smallnesss assumption (\ref{REDI_TENSOR_SMALL_SLOPE}) of the slopes nevertheless guarantees the ellipticity of $\KISO$. The remainder of the proof of Theorem \ref{THEOREM_EXISTENCE_WEAKSTRONG_SOLUTIONS} 
%%the existence of weak solution of the ocean primitive equations 
%translates to the small slope case.
For the small density slope approximation only the proof of ellipticity of $\KISO$ in Lemma \ref{LEMMA_PROP_REDIOPERATOR} needs to be modified,
it follows with Young's inequality from
%the rest of the proof of Theorem \ref{THEOREM_EXISTENCE_WEAKSTRONG_SOLUTIONS} 
%%%the existence of weak solution of the ocean primitive equations 
%remains unchanged. The asserted bounds follow with Young's inequality from %For the inner product of $\REDIOPSMALL$ with $C\in\{\theta, S\}$ we find
\begin{align*}\label{ELIIPTIC_SMALL_SLOPE}
&\int_\Omega\nabla C\cdot(\KISOSMALL\nabla C)\, dxdydz
=\int_\Omega |\nabla_h C|^2+(\delta+\SL^2)|\partial_z C|^2dxdydz\nonumber\\
&+ \int_\Omega \SLx\partial_zC\partial_xC + \SLy\partial_zC\partial_yC\, dxdydz.
\end{align*}
The rest of the proof of Theorem \ref{THEOREM_EXISTENCE_WEAKSTRONG_SOLUTIONS} 
%%the existence of weak solution of the ocean primitive equations 
remains unchanged. 
%For the small slope approximation only the proof of ellipticity of $\KISO$ in Lemma \ref{LEMMA_PROP_REDIOPERATOR} needs to be modified ,
%the rest of the proof of Theorem \ref{THEOREM_EXISTENCE_WEAKSTRONG_SOLUTIONS} 
%%%the existence of weak solution of the ocean primitive equations 
%remains unchanged. The asserted bounds follow with Young's inequality from %For the inner product of $\REDIOPSMALL$ with $C\in\{\theta, S\}$ we find
%\begin{align*}\label{ELIIPTIC_SMALL_SLOPE}
%&\int_\Omega\nabla C\cdot(\KISOSMALL\nabla C)\, dxdydz
%=\int_\Omega |\nabla_h C|^2+(\delta+\SL^2)|\partial_z C|^2dxdydz\nonumber\\
%&+ \int_\Omega \SLx\partial_zC\partial_xC + \SLy\partial_zC\partial_yC\, dxdydz.
%\end{align*}
%The asserted lower (upper) bounds follows 
%by assuming that the second integral is negative (positive) and applying Young's inequality. 
%This results in 
%%\begin{align*}
%%&\int_\Omega \SLx\partial_zC\partial_xC + \SLy\partial_zC\partial_yC\, dxdydz
%%\leq
%%\int_\Omega \frac{1}{2}|\SLx\partial_zC|^2+\frac{1}{2}|\partial_xC|^2 + \frac{1}{2}|\SLy\partial_zC|^2+\frac{1}{2}|\partial_yC|^2\, dxdydz\\
%%&=\int_\Omega \frac{1}{2}|\nabla_h C|^2+\frac{1}{2}\SL^2|\partial_z C|^2\, dxdydz
%%\end{align*}
%\begin{align}\label{ELIIPTIC_SMALL_SLOPE1}
%&\mu ||\nabla_3 C||^2_\Lsq\leq \int_\Omega\nabla C\cdot(\KISOSMALL\nabla C)\, dxdydz
%\leq M||\nabla_3 C||^2_\Lsq,
%\end{align}
%%with $M:=3/2$ and $\mu:=\delta/2$.
%%\newpage
\subsection{Proof of Theorem \ref{THEOREM_EXISTENCE_STRONG_SOLUTIONS}}\label{SUBSECT_PROOF_EX_STRONG}
%%%%%%%%%%%%%%%%%%%%%%%%%%%%%%%%%%%%%%%%%%%%%%%%%
\begin{proof}
We infer from Theorem \ref{THEOREM_EXISTENCE_WEAKSTRONG_SOLUTIONS} that a weak solution $(v,\theta,S)$ exists. We proceed by showing that the weak solutions satisfies a priori estimates in $L^\infty_tH^1_x$.
% and second that these a priori bounds contradict the assumption that the strong solution exists locally in time. %Finally we show the uniqueness of the solution. 
%%%%%%%%%%%%%%%%%%%%%%%%%%%%%%%%%%%%%%%%%%%%%%%%%

%%%%%%%%%%%%%%%%%%%%%%%%%%%%%%%%%%%%%%%%%%%%%%%%%
%\subsection{Step 2: \it Global Existence in Time of Strong Solution.}\label{Weak_Solution}
%%%%%%%%%%%%%%%%%%%%%%%%%%%%%%%%%%%%%%%%%%%%%%%%%
\subsubsection*{Step 1: \it $L^\infty([0,T],\Hone)$-bound on Velocity}\label{STRONG_SOL_VELOC}
%%%%%%%%%%%%%%%%%%%%%%%%%%%%%%%%%%%%%%%%%%%%%%%%%
The proof of $L^\infty([0,T],\Hone)$-estimates for velocity can be carried out as in Cao-Titi \cite{CaoTiti} (cf. Sections. 3.3.1-3.3.3) and
one arrives at the estimate% (see p. 261 in \cite{CaoTiti})
%\begin{equation}\begin{split}\label{PROOF_Hone_velocity}
%&d_t||\nabla_3 {v}||_{\Lsq}^2+\frac{1}{Re_1}||\triangle {v}||_{\Lsq}^2+\frac{1}{Re_2}||\nabla\partial_z {v}||_{\Lsq}^2\\
%%&\leq
%%C\big(K_6(t)+K_z(t)||\nabla {v} ||_{\Lsq}^2\big)||\nabla {v}||_{\Lsq}^2 
%%+ C||\nabla\rho||_{\Lsq}^2\\
%&\leq
%K_0(t)||\nabla_3v||_\Lsq^2 %C\big(K_6(t)+K_z(t)||\nabla {v} ||_{\Lsq}^2\big)||\nabla {v}||_{\Lsq}^2 
%+ C(||\nabla\theta||_{\Lsq}^2+||\nabla S||_{\Lsq}^2),
%\end{split}\end{equation}
%where the right-hand side is bounded on $[0, T]$ (see eqs. (76), (77) in \cite{CaoTiti}). From (\ref{PROOF_Hone_velocity}) follows
%with the Gronwall inequality %that $v\in L^\infty([0,T], \Hone)$.
%%The density gradient 
%%on the right hand-side (\ref{PROOF_NONLINEAR_EOS0}) can be estimated with (\ref{PROOF_THM_1_14}).
%%The $L^\infty([0,{T}],\Hone)$-estimate for velocity can now be completed as in Section 3.3.3 of \cite{CaoTiti} and we obtain for $t\in[0,T]$
\begin{equation}\begin{split}\label{PROOF_STRONG_SOLUTION_1}
&||\nabla_3 v(t)||_{\Lsq}^2%+||\partial_z v_m(t)||_{\Lsq}^2
+\int_0^t
\frac{1}{Re_1}||\triangle v(s)||_{\Lsq}^2
%+\frac{1}{Re_2}||\partial_{zz}^2 v(s)||_{\Lsq}^2
+\frac{1}{Re_2}||\nabla\partial_z v(s)||_{\Lsq}^2\, ds\\
&\leq %K_5(t),
||v_0||_{\Hone}e^{\int_0^t K_0(t) \,ds }%\\&\hskip0cm
+c\int_0^t (||\nabla\theta(s)||_{\Lsq}^2+||\nabla S(s)||_{\Lsq}^2)e^{\int_0^s K_0(\tau)\,d\tau }\,ds,%=:K_v(t)
\end{split}\end{equation}
where the right-hand side %$K_v(t)$ %depends also on the right-hand side of (\ref{PROOF_THM_1_14}),
 is bounded on $[0,{T}]$. % and bounded uniformly in $m$.
%The proof of the $L^\infty([0,T],\Hone)$-estimates for velocity can be carried out as in \cite{CaoTiti} (cf. sect. 3.3.1-3.3.3). We sketch the steps only and refer the reader for more details to the original paper.

%%%%%%%%%%%%%%%%%%%%%%%%%%%%%%%%%%%%%%%%%%%%%%%%%
\subsubsection*{Step 2: \it $L^\infty([0,T],\Hone)$-bound on Temperature and Salinity}\label{STRONG_SOL_TRACER}
%%%%%%%%%%%%%%%%%%%%%%%%%%%%%%%%%%%%%%%%%%%%%%%%%
%The $H^1$-estimate for the ocean tracers makes use of Lemma \ref{LEMMA_ELLIPTIC_REG_REDI}.
%We recall the decomposition $\Omega=\Omega_0\cap\Omega_1\cup\Omega_2$ and $\Omegaint=\Omega_0\cup\Omega_1$  (cf. (\ref{DOMAIN_DECOMP})). 
%The operator $\REDIOP$ uses on $\Omega_0$ the mixing tensor (\ref{REDI_TENSOR_FULL}) with isoneutral slopes $\SL$. On $\Omega_1$ these slopes are reduced (cf. (\ref{SLOPE_DEF_1})) through the tapering function 
%$\taperbound$ that acts on the neutral vector $\neutral$ (see (\ref{NORMAL_NEUTRAL})) and  on 
%$\Omega\setminus\Omegaint=\Omega_2$ the operator $\REDIOP$ becomes an anisotropic Laplacian. 
Taking the $L^2$ inner product of the tracer equation with $\REDIOP(C)$ yields 
\begin{equation}\begin{split}\label{PROOF_STRONG_SOLUTION_2}
%&\int_\Omega \frac{\partial\nabla_3C}{\partial t}\cdot(\KISO\nabla_3C)\, dxdydz
&\int_\Omega \partial_t C\, \REDIOP(\rhobar)(C)\, dxdydz
+||\REDIOP(\rhobar)(C)||_{\Lsq}^2\\
&=
\int_\Omega\big(( v+\vbolus)\cdot\nabla C 
+ (w+\wbolus)\partial_zC\big)
%-\nabla\cdot\big(\int_{-h}^z v(x,y,\xi,t)\, d\xi\big)\partial_zC\big]\times\\&\hskip1cm\times
\REDIOP(\rhobar)(C)\, dxdydz.
\end{split}\end{equation}
For the time derivative in (\ref{PROOF_STRONG_SOLUTION_2}) follows with product rule and integration-by-parts
\begin{align}
%\begin{equation}\begin{split}\label{PROOF_STRONG_SOLUTION_2_time_deriv1}
&\int_\Omega \partial_t C\, \REDIOP(\rhobar)(C)\, dxdydz\nonumber
%=\int_\Omega \partial_t \nabla_3 C_m\cdot (\KISO\nabla_3C_m)\, dxdydz\\
%%&=\int_\Omega \partial_t \big(C_m\, \REDIOP(C_m))\, dxdydz\
%%- \int_\Omega C_m\,  \partial_t(\REDIOP(C_m))\, dxdydz\nonumber\\
%&=\int_\Omega \partial_t \big(\nabla_3C_m\cdot \KISO(\rhobar_m)\nabla_3C_m)\, dxdydz
%- \int_\Omega \nabla_3C_m\cdot  \partial_t(\KISO(\rhobar_m)\nabla_3 C_m)\, dxdydz\nonumber\\
=\partial_t\int_\Omega  \nabla_3C\cdot (\KISO(\rhobar)\nabla_3C)\, dxdydz\\
&- \int_\Omega \nabla_3C\cdot \big( \partial_t(\KISO(\rhobar))\nabla_3 C \big)\, dxdydz%\nonumber\\
- \int_\Omega \nabla_3C \cdot  (\KISO(\rhobar)\nabla_3 \partial_t C)\, dxdydz,\label{PROOF_STRONG_SOLUTION_2_time_deriv1}
%\end{split}\end{equation}
\end{align}
where $ \partial_t(\KISO(\rhobar))$ denotes the matrix whose entries consist of time derivatives of the entries of $\KISO$. 
For the third integral on the right hand side of (\ref{PROOF_STRONG_SOLUTION_2_time_deriv1}) follows with the boundedness of $\KISO$ 
(Prop. \ref{LEMMA_PROP_REDIOPERATOR}) and Young's inequality
\begin{equation}\begin{split}\label{PROOF_STRONG_SOLUTION_2_time_deriv2}
 &\int_\Omega \nabla_3C\cdot  (\KISO(\rhobar)\nabla_3 \partial_t C)\, dxdydz
%=\int_\Omega \mathbb{D}^{1/2}\mathcal{R}^T \nabla_3C_m\cdot  (\mathbb{D}^{1/2}\mathcal{R}^T\nabla_3 \partial_t C_m)\, dxdydz\\
%&\leq M||\nabla_3 C_m||_{\Lsq}||\nabla_3\partial_t C_m||_{\Lsq}
\leq (\frac{M}{2\epsilon}||\nabla_3 C||_{\Lsq}^2+\frac{\epsilon}{2}||\nabla_3\partial_t C||_{\Lsq}^2).
\end{split}\end{equation}
The x-component of the second integral on the right-hand side in (\ref{PROOF_STRONG_SOLUTION_2_time_deriv1}) can with 
the inequalities of H\"older, Ladyshenzkaya, Young
%Lemma \ref{LEMMA_DIFFERENCES} $ii)$ %on the time derivative of the slopes 
be estimated as follows
\begin{equation}\begin{split}\label{PROOF_STRONG_SOLUTION_2_time_deriv4}
&\big|\int_\Omega 
\partial_xC\bigg(\partial_x C 
\partial_t\big (
\frac{1+\delta\SLx^2 +\SLy^2}{1+\SL^2}\big)%\partial_x C
+\partial_y C
\partial_t\big (\frac{(\delta-1)\SLx\SLy }{1+\SL^2}\big)
+\partial_z C
\partial_t\big (\frac{(1-\delta)\SLx }{1+\SL^2}\big)\bigg)\, dxdydz\big|\\
%\leq K_L||\nabla_3 C||_\Lsq^2\\
&\leq
\int_\Omega 
\big| \partial_xC(\partial_xC+\partial_yC+\partial_zC)\partial_t\SL \big| \, dxdydz
\leq ||\nabla_3 C||_{L^3}||\nabla_3 C||_{L^6}||\partial_t \SL||_{L^2} \\
&\leq \frac{c}{4\epsilon_1\epsilon_2}||\partial_t \SL||_{L^2}^4||\nabla_3 C||_{L^2}^2 +(\frac{\epsilon_1}{2}+\frac{\epsilon_2}{2})|| C||_{H^2}^2
\end{split}\end{equation}
Due to the symmetry of the matrix $\KISO$
analogous estimates apply to y- and z-components of the inner product in the second integral on the right-hand side of (\ref{PROOF_STRONG_SOLUTION_2_time_deriv1}) and
in summary it holds for the time derivative term
\begin{equation}\begin{split}\label{PROOF_STRONG_SOLUTION_2_time_deriv4a}
\big|\int_\Omega \nabla_3C\cdot \big( \partial_t(\KISO(\rhobar))\nabla_3 C \big)\, dxdydz\big|
\leq %K_L||\nabla_3 C||_\Lsq^2.
\frac{c}{4\epsilon_1\epsilon_2}||\partial_t \SL||_{L^2}^4||\nabla_3 C||_{L^2}^2 +(\frac{\epsilon_1}{2}+\frac{\epsilon_2}{2})||C||_{H^2}^2.
\end{split}\end{equation}

For the first term on the right hand side of (\ref{PROOF_STRONG_SOLUTION_2})
we obtain by the inequalities of H\"older, Ladyshenzkaya, Young and by Lemma \ref{LEMMA_ELLIPTIC_REG_REDI}
\begin{equation}\begin{split}\label{PROOF_STRONG_SOLUTION_31}
&|\int_\Omega  \big( v\cdot\nabla C\big)  \REDIOP(\rhobar)(C)\,dxdydz |\\
&\leq
c||v||_{L^6(\Omega)}||\nabla C||_{L^3(\Omega)} ||\REDIOP(\rhobar)(C)||_{\Lsq}\\
%&\leq
%c||v||_{L^6(\Omega)}||\nabla C||_{\Lsq}^{1/2}||\nabla C||_{\Hone}^{1/2} ||\REDIOP(\rhobar)(C)||_{\Lsq}\\
&\leq
\frac{c}{2\epsilon_3}||v||_{L^6(\Omega)}^2||\nabla C||_{\Lsq}||C||_{\Htwo}+\frac{\epsilon_3}{2} ||\REDIOP(\rhobar)(C)||_{\Lsq}^2\\
%&\leq
%\frac{c}{4\epsilon_1\epsilon_2}+||\vbolus_m||_{L^6(\Omegaint)}^4||\nabla_3 C_m||_{\Lsqint}^2
%+\frac{\epsilon_2}{2}|| C_m||_{{\Htwoint}}
%+\frac{\epsilon_1} {2}||\REDIOP(C_m)||_{\Lsqint}^2\\
&\leq
\frac{c}{4\epsilon_3\epsilon_4}||v||_{\Hone}^4||\nabla_3 C||_{\Lsq}^2
+(\frac{\epsilon_3}{2}+\frac{\epsilon_4}{2})||\REDIOP(\rhobar)(C)||_{\Lsq}^2. 
%&\\
%&\leq
%\frac{c}{4\epsilon_5\epsilon_6}||v_m||_{\Hone}^4+||\vbolus_m||_{\Hone}^4||\nabla C_m||_{\Lsq}^2
%+(\frac{\epsilon_5}{2}+\frac{\epsilon_6}{2})||\REDIOP(C_m)||_{\Lsq}^2.
\end{split}\end{equation}
The term with the eddy-induced horizontal velocity $\vbolus$ in (\ref{PROOF_STRONG_SOLUTION_2}) is estimated analogously, here we use that 
$\vbolus$ vanishes in the vicinity of the boundary such that it suffices to consider the integral over $\Omegainthalf$ %in (\ref{PROOF_STRONG_SOLUTION_31}) and obtain
%%Using that $\vbolus$ is zero in the vicinity of the boundary 
%%such that we can restrict the integral to the ocean interior $\Omegaint$. 
%%it follows with the inequalities of H\"older, Ladyshenzkaya, Young and by Lemma \ref{LEMMA_ELLIPTIC_REG_REDI}
\begin{equation}\begin{split}\label{PROOF_STRONG_SOLUTION_3}
&|\int_\Omega  \big(\vbolus\cdot\nabla C\big)  \REDIOP(\rhobar)(C)|\,dxdydz |
=|\int_\Omegainthalf  \big(\vbolus\cdot\nabla C\big)  \REDIOP(\rhobar)(C)\,dxdydz |\\\
%&\leq
%c||\vbolus_m||_{L^6(\Omegainthalf)}||\nabla C_m||_{L^3(\Omegainthalf)} ||\REDIOP(C_m)||_{\Lsqinthalf}\\
%&\leq
%c||\vbolus_m||_{L^6(\Omegaint)}||\nabla C_m||_{\Lsqint}^{1/2}||\nabla C_m||_{\Honeint}^{1/2} ||\REDIOP(C_m)||_{\Lsqint}\\
%&\leq
%\frac{c}{2\epsilon_1}||\vbolus_m||_{L^6(\Omegaint)}^2||\nabla C_m||_{\Lsqint}||C_m||_{\Htwoint}+\frac{\epsilon_1}{2} ||\REDIOP(C_m)||_{\Lsqint}^2\\
%%&\leq
%%\frac{c}{4\epsilon_1\epsilon_2}+||\vbolus_m||_{L^6(\Omegaint)}^4||\nabla_3 C_m||_{\Lsqint}^2
%%+\frac{\epsilon_2}{2}|| C_m||_{{\Htwoint}}
%%+\frac{\epsilon_1} {2}||\REDIOP(C_m)||_{\Lsqint}^2\\
&\leq
\frac{c}{4\epsilon_5\epsilon_6}||\vbolus||_{\Honeinthalf}^4||\nabla C||_{\Lsq}^2
+(\frac{\epsilon_5}{2}+\frac{\epsilon_6}{2})||\REDIOP(\rhobar)(C)||_{\Lsq}^2\\
&\leq
\frac{c}{4\epsilon_5\epsilon_6}||\SL||_{\Htwointhalf}^4||\nabla_3 C||_{\Lsq}^2
+(\frac{\epsilon_5}{2}+\frac{\epsilon_6}{2})||\REDIOP(\rhobar)(C)||_{\Lsq}^2.
\end{split}\end{equation}
Analogously we find for the integral involving the vertical velocity $\wbolus$  
\begin{equation}\begin{split}\label{PROOF_STRONG_SOLUTION_3vert}
&|\int_\Omega  \big(\wbolus\partial_z C\big) \REDIOP(\rhobar)(C)\,dxdydz |
=|\int_\Omegainthalf  \big(\wbolus\partial_z C\big)  \REDIOP(\rhobar)(C)\,dxdydz |\\\
%&\leq
%c||\wbolus_m||_{L^6(\Omegainthalf)}||\partial_z C_m||_{L^3(\Omegaint)} ||\REDIOP(C_m)||_{\Lsqint}\\
%&\leq
%c||\wbolus_m||_{L^6(\Omegaint)}||\partial_z C_m||_{\Lsqint}^{1/2}||\partial_z C_m||_{\Honeint}^{1/2} ||\REDIOP(C_m)||_{\Lsqint}\\
%&\leq
%\frac{c}{2\epsilon_3}||\wbolus_m||_{L^6(\Omegaint)}^2||\partial_z C_m||_{\Lsqint}||C_m||_{\Htwo}
%+\frac{\epsilon_3}{2} ||\REDIOP(C_m)||_{\Lsqint}^2\\
%%&\leq
%%\frac{c}{4\epsilon_3\epsilon_4}+||\wbolus_m||_{L^6(\Omegaint)}^4||\nabla_3 C_m||_{\Lsqint}^2
%%+\frac{\epsilon_2}{2}|| C_m||_{{\Htwoint}}
%%+\frac{\epsilon_1} {2}||\REDIOP(C_m)||_{\Lsqint}^2\\
%&\leq
%\frac{c}{4\epsilon_5\epsilon_6}||\wbolus||_{\Honeinthalf}^4||\nabla C||_{\Lsq}^2
%+(\frac{\epsilon_5}{2}+\frac{\epsilon_6}{2})||\REDIOP(\rhobar)(C)||_{\Lsq}^2\\
&\leq
\frac{c}{4\epsilon_7\epsilon_8}||\SL||_{\Htwointhalf}^4||\nabla_3 C||_{\Lsq}^2
+(\frac{\epsilon_7}{2}+\frac{\epsilon_8}{2})||\REDIOP(\rhobar)(C)||_{\Lsq}^2.
\end{split}\end{equation}
%\todo{Here we need $H^2$-regularity for the Laplacian and this involves regularity of the boundary}
The last term  on the right hand side  of (\ref{PROOF_STRONG_SOLUTION_2}) is estimated with 
Prop. 2.2. in \cite{CaoTiti_2003}%, setting $u=v_m$, $f=\REDIOP(C)$ and $g=\partial_zC$ yields% for the 
 \begin{equation}\begin{split}\label{PROOF_STRONG_SOLUTION_41}
&\int_\Omega\bigg[\big(\int_{-h}^z\nabla\cdot (v(x,y,\xi,t))\, d\xi\big)\partial_zC\bigg]\REDIOP(\rhobar)(C)\, dxdydz\\
&\leq
\frac{c}{4\epsilon_9\epsilon_{10}}||v||_{\Hone}^2||v||_{\Htwo}^2||\partial_z C||_{\Lsq}^2
+(\frac{\epsilon_9}{2}+\frac{\epsilon_{10}}{2})||\REDIOP(\rhobar)(C)||_{\Lsq}^2.
\end{split}\end{equation}
Choosing  in (\ref{PROOF_STRONG_SOLUTION_31}) - (\ref{PROOF_STRONG_SOLUTION_41}) the values
$\epsilon_i=\frac{1}{5}$ yields
%, then the terms involving the isoneutral operator on the left and right-hand side of (\ref{PROOF_STRONG_SOLUTION_2}) compensate and we obtain
\begin{equation}\begin{split}\label{PROOF_STRONG_SOLUTION_6}
&\frac{d}{dt}||\nabla_3 C||^2_{L^2}
%&|\int_\Omega  \partial_t C_m\, \REDIOP(\rhobar_m)(C_m)\, dxdydz|
+||\REDIOP(\rhobar)(C)||_{\Lsq}^2
\leq
%c
%\big((||v_m||_{L^6(\Omega)}^4+||\vbolus_m||_{L^6(\Omega)}^4)
%+
%(|| v_m||_{H^1}^2+||\vbolus_m||_{H^1}^2)
%(||v_m||_{H^2}^2+||\vbolus_m||_{H^2}^2)
%\big)\times\\
%&\times
G||\nabla_3C||_{\Lsq}^2,\\
\text{with }&
G:=c
\big(M+||\partial_t \SL||_{L^2} ^4+%K_L+
||\SL||_{\Htwointhalf}^4
%||\vbolus_m||_{\Honeinthalf}^4
%+||\wbolus_m||_{\Honeinthalf}^4
+||v||_{\Hone}^4
+||v||_{\Hone}^2||v||_{\Htwo}^2
%+M+  ||K_\SL||_{L^\infty}||K_{\SL}||_{L^\infty}+||K_\SL||_{L^\infty}
%(||v_m||_{\Hone}^4+||\vbolus_m||_{\Honeint}^4)
%%+(|| v_m||_{\Hone}^2+||\vbolus_m||_{\Honeint}^2)
%(||v_m||_{\Htwo}^2+||\vbolus_m||_{\Htwoint}^2)
\big).
\end{split}\end{equation}
Since $G$ is integrable, 
%due to (\ref{PROOF_STRONG_SOLUTION_1}) and Lemma \ref{LEMMA_DIFFERENCES} $i)$ a $K_G>0$ exists such that
%$G\leq K_G$. From (\ref{PROOF_STRONG_SOLUTION_6}) 
it follows with the Gronwall inequality that
\begin{equation}\begin{split}\label{PROOF_STRONG_SOLUTION_10}
&||\nabla_3 C(t)||^2_{\Lsq}
\leq
||\nabla_3 C(t=0)||_{\Lsq}^2exp\big(\int_0^t 
G(s)\, ds\big)=:K_C(t),
\end{split}\end{equation}
%This implies for (\ref{PROOF_STRONG_SOLUTION_8})
%\begin{equation}\begin{split}\label{PROOF_STRONG_SOLUTION_10}
%&||\nabla_3 C_m(t)||^2_{\Lsq}
%\leq
%||\nabla_3 C_0||_{\Lsq}^2exp\big(\int_0^t K_4^4(s)\, ds\big)=:K_5,
%\end{split}\end{equation}
%where $K_C$ is bounded.
%follows for $v_m$ with (\ref{PROOF_STRONG_SOLUTION_1}) and
%for $\vbolus_m$ from (\ref{PROOF_STRONG_SOLUTION_9a}) and (\ref{GIBBS_RHO_ASSUMPTION2}). %with %Corollary \ref{Corollary_boundedness_K}. %remark \ref{REM_UPPER_BOUND_DATA}.
%%that $G_m$ is bounded uniformly. % in $L^2([0,T], \Htwo)$.
and it follows $C\in L^\infty([0,{T}],\Hone)$.
Integrating (\ref{PROOF_STRONG_SOLUTION_6})
with respect to time yields with (\ref{PROOF_STRONG_SOLUTION_10}),
%(\ref{PROOF_STRONG_SOLUTION_1}) and (\ref{PROOF_THM_1_13})
\begin{equation}\begin{split}\label{PROOF_STRONG_SOLUTION_11}
&\int_0^t||\REDIOP(C)(s)||_{\Lsq}^2ds
\leq
%||\nabla_3 C_0||^2_{\Lsq}+\int_0^t G(s)||\nabla_3C(s)||_{\Lsq}^2ds\\
%&\leq 
||\nabla_3 C(t=0)||^2_{\Lsq}+\int_0^t G(s) K_C(s)ds.
\end{split}\end{equation}
%with $K_3,K_4$ from (\ref{PROOF_THM_1_13}), (\ref{PROOF_STRONG_SOLUTION_1}).
This proves that $\REDIOP(C)$ in $L^2([0,T],\Lsq)$. Since $\REDIOP(C)(\cdot,\cdot,\cdot,t)\in \Lsq)$ 
for almost every $t\in [0,{T}]$ it follows %from (\ref{PROOF_STRONG_SOLUTION_11}) 
with Lemma \ref{LEMMA_ELLIPTIC_REG_REDI} that $C\in L^2([0,{T}], {\Htwo})$.
%%%%%%%%%%%%%%%%%%%%%%%%%%%%%%%%%%%%%%%%%%%%%%%%%
%%%%%%%%%%%%%%%%%%%%%%%%%%%%%%%%%%%%%%%%%%%%%%%%%
\subsubsection*{Step 3: \it Bound on the Time Derivative in $L^2([0,T], \Lsq)$}\label{LIMIT_TIMEDERIV}
%%%%%%%%%%%%%%%%%%%%%%%%%%%%%%%%%%%%%%%%%%%%%%%%%
Taking the scalar product of the tracer equation with $\partial_t C_m$ and applying the inequalities of Cauchy-Schwarz and Young to the $\REDIOP$- term yields
\begin{equation}\begin{split}\label{PROOF_STRONG_SOLUTION_12}
&\int_0^{T}||\partial_t C(s)||^2_{\Lsq}ds
\leq
\int_0^{T} ||\REDIOP(\rhobar)(C)(s)||_{\Lsq}^2ds\\
&+2\int_0^{T}\big|\big(\int_\Omega
\big[( v(x,y,z,s)+\vbolus(x,y,z,s))\cdot\nabla C(x,y,z,s)\\
&-\nabla\cdot\big(\int_{-h}^z v(x,y,\xi,s)+\vbolus(x,y,\xi,s)\, d\xi\big)\partial_zC(x,y,z,s)
\big]\partial_t C(x,y,z,s)\, dxdydz\big)\big|ds.
\end{split}\end{equation}
For estimating the right-hand side we proceed as in Step 2 and obtain
% but replace 
%$\REDIOP(\rhobar)(C)$ with $\partial_tC$. This yields %the equivalent of (\ref{PROOF_STRONG_SOLUTION_6})
\begin{equation}\begin{split}\label{PROOF_STRONG_SOLUTION_14}
&\int_0^{T}|||\partial_t C(s)||^2_{\Lsq}ds
\leq 
\int_0^{T}||\REDIOP(\rhobar)(C)(s)||_{\Lsq}^2%ds
+%\int_0^{T}
H(s)||\nabla_3C(s)||_{\Lsq}^2ds,\\
&\text{with }H:=
||\SL||_{\Htwointhalf}^4
+||v||_{\Hone}^4
+||v||_{\Hone}^2||v||_{\Htwo}^2.
\end{split}\end{equation}
%where $H:=||\vbolus||_{\Honeinthalf}^4
%+||\wbolus||_{\Honeinthalf}^4
%+||v||_{\Hone}^4
%+||v||_{\Hone}^2||v_m||_{\Htwo}^2$ is integrable. %bounded according to (\ref{PROOF_STRONG_SOLUTION_1}).  
Since $H$ is integrable, it follows with (\ref{PROOF_STRONG_SOLUTION_10}), (\ref{PROOF_STRONG_SOLUTION_11})
that %is the  right-hand side of (\ref{PROOF_STRONG_SOLUTION_14}) finite. 
$\partial_t C$ is in $L^2([0,{T}], \Lsq)$. Similarly  follows that $\partial_t v\in L^2([0,{T}], \Lsq)$.
%%%%%%%%%%%%%%%%%%%%%%%%%%%%%%%%%%%%%%%%%%%%%%%%%

%%%%%%%%%%%%%%%%%%%%%%%%%%%%%%%%%%%%%%%%%%%%%%%%%
\subsubsection*{Step 4: \it Uniqueness and Continuous Dependence on Initial Conditions}\label{STRONG_SOL_UNIQUE}
%%%%%%%%%%%%%%%%%%%%%%%%%%%%%%%%%%%%%%%%%%%%%%%%%%
%%{\bf Basic idea:} Consider the PDE governing the difference between two solution and provide estimates such that uniqueness can be concluded from a Gronwall estimate.\\ L^2([0,T], \Hone)
Consider two strong solutions $(v_1,\theta_1,S_1)$, $(v_2,\theta_2,S_2)$ with initial conditions
$((v_0)_1$,$ (\theta_0)_1$, $(S_0)_1)$, $((v_0)_2,(\theta_0)_2,(S_0)_2)$ %and subject to the same external forcing.
and denote the differences between them as follows
\begin{equation}\begin{split}
&\hat{v}:=v_1-v_2,\qquad\hat{\theta}:=\theta_1-\theta_2,\qquad \hat{S}:=S_1-S_2, \qquad \hat{\rho}:=\rho_1-\rho_2\\
&%\hat{p}_s:=(p_s)_1-(p_s)_2,,
\hat{\vbolus}:=\vbolus_1-\vbolus_2
\hat{C}\in\{\hat{\theta}, \hat{S}\}\qquad
V_i:=v_i+\vbolus_i,\qquad\ \hat{V}=V_1-V_2=\hat{v}+\hat{\vbolus}.
%&\hat{\vbolus}:=\vbolus_1-\vbolus_2,\qquad\qquad\ \hat{w}:=w_1^*-w_2^*
\end{split}\end{equation}
Taking the inner product of the difference equation for velocity with $\hat{v}$ yields
(see \cite{CaoTiti}, p. 262 - 265)
\begin{equation}\begin{split}\label{UNIQUE_VELOC_DIFF_5}
%&\frac{1}{2}\frac{d}{d t}|| \hat{v}||_{\Lsq}^2+\frac{1}{Re_1}||\nabla \hat{v}||_{\Lsq}^2+\frac{1}{Re_2}||\partial_z \hat{v}||_{\Lsq}^2\\
%%&\leq
%%\frac{c}{2\epsilon_0}[||\hat{\theta}||_{\Lsq}^2  +||\hat{S}||_{\Lsq}^2]+\frac{\epsilon_0}{2}||\nabla \hat{v}||_{\Lsq}^2\\
%%&+
%%\frac{c}{2\epsilon_1} ||\nabla v_2||_{\Lsq}^2||\hat{v}||_{\Lsq}^2 + \frac{\epsilon_1}{2}||\nabla \hat{v}||_{\Lsq}^2\\
%%&+
%%\frac{c}{\epsilon_2}||\partial_z v_2||_{\Lsq}^2||\nabla\partial_z v_2||_{\Lsq}^2||\hat{v}||_{\Lsq}^2
%%+\frac{\epsilon_2}{2}||\nabla \hat{v}||_{\Lsq}^{2}\\
%%&\\
%&\leq
%\frac{c}{2\epsilon_0}||\hat{\rho}||_{\Lsq}^2%(||\hat{\theta}||_{\Lsq}^2  +||\hat{S}||_{\Lsq}^2)
%+(\frac{\epsilon_0}{2}+\frac{\epsilon_1}{2}+\frac{\epsilon_2}{2}) ||\nabla \hat{v}||_{\Lsq}^2\\
%&+
%(\frac{c}{2\epsilon_1} ||\nabla v_2||_{\Lsq}^2+\frac{c}{\epsilon_2}||\partial_z v_2||_{\Lsq}^2||\nabla\partial_z v_2||_{\Lsq}^2)||\hat{v}||_{\Lsq}^2\\
%%%%%%%%%%%%%%%%%%%
&\frac{d}{d t}|| \hat{v}||_{\Lsq}^2
+||\nabla_3 \hat{v}||_{\Lsq}^2%+||\partial_z \hat{v}||_{\Lsq}^2
\leq
F||\hat{v}||_{\Lsq}^2+c||\hat{\rho}||_{\Lsq}^2\\
\text{with }&F:=
\frac{c}{2\epsilon_1} ||\nabla v_2||_{\Lsq}^2+\frac{c}{\epsilon_2}||\partial_z v_2||_{\Lsq}^2||\nabla\partial_z v_2||_{\Lsq}^2
\end{split}\end{equation}
For the difference equation for a generic tracer we obtain similarly
\begin{equation}\begin{split}\label{UNIQUE_TRACER_DIFF_T}
&\frac{d}{d t}|| \hat{C}||_{\Lsq}^2
+\int_\Omega \big(\REDIOP[\rhobar_1](C_1)-\REDIOP[\rhobar_2](C_2)\big)\hat{C}\, dxdydz\\
%%%&+\int_\Omega \big(\GMOP[\SL_1](C_1)-\GMOP[\SL_2](C_2)\big)\hat{C}\, dxdydz\\
%&= -\int_\Omega\bigg[(V_1\cdot\nabla)\hat{C} +(\hat{V}\cdot\nabla)C_2 
% -\big(\int_{-h}^z\nabla\cdot V_1(x,y,\xi,t)\, d\xi \big)\partial_z \hat{C}\bigg]\hat{C}\, dxdydz\\
%& -\int_\Omega\big(\int_{-h}^z\nabla\cdot  \hat{V}(x,y,\xi,t)\, d\xi\big)\partial_z C_2\, \hat{C}\, dxdydz\\
%&\leq
%\frac{c}{\epsilon_3\epsilon_4} ||\nabla C_2||_{\Lsq}^4||\hat{C}||_{\Lsq}^2
%+\frac{\epsilon_4}{2}||\nabla\hat{C}||_{\Lsq}^2+\frac{\epsilon_3}{2}||\nabla \hat{v}||_{\Lsq}^2\\
%&+\frac{c\epsilon_3}{2}||\hat{\rhobar}||_{H^3(\Omega)}^2 %(||\hat{\theta}||_{\Lsq}^2+||\hat{S}||_{\Lsq}^2)
%+\frac{c}{\epsilon_6\epsilon_5}||\partial_z C_2||^2_{\Lsq}||\nabla\partial_z C_2||^2_{\Lsq}||\hat{C}||_{\Lsq}^2\\
%&+\frac{\epsilon_5}{2}||\nabla\hat{C}||_{\Lsq}^2+\frac{\epsilon_6}{2}||\nabla \hat{v}||_{\Lsq}^2
%+\frac{c\epsilon_6}{2}||\hat{\rhobar}||_{H^3(\Omega)}^2.% (||\hat{\theta}||_{\Lsq}^2+1||\hat{S}||_{\Lsq}^2).
&\leq
G||\hat{C}||_{\Lsq}^2
+(\frac{\epsilon_4}{2}+\frac{\epsilon_5}{2})||\nabla\hat{C}||_{\Lsq}^2
+(\frac{\epsilon_3}{2}+\frac{\epsilon_6}{2})||\nabla \hat{v}||_{\Lsq}^2\\
&+c(\frac{\epsilon_3}{2}+\frac{\epsilon_6}{2})||\hat{\rhobar}||_{H^3(\Omega)}^2\\
&\text{with }G:=\frac{c}{\epsilon_3\epsilon_4} ||\nabla C_2||_{\Lsq}^4
+\frac{c}{\epsilon_6\epsilon_5}||\partial_z C_2||^2_{\Lsq}||\nabla\partial_z C_2||^2_{\Lsq}
\end{split}\end{equation}
We decompose the second term on the left-hand side as follows
\begin{equation}\begin{split}\label{UNIQUE_DIFF_ALL8a}
&\int_\Omega 
\big(\REDIOP[\rhobar_1](C_1)-\REDIOP[\rhobar_2](C_2)\big) \hat{C}\, dxdydz \\
%&=
%\int_\Omega 
%\big(\widehat{\mathbb{K}}_{iso}\nabla_3 C_1+\KISO[\neutral_2]\nabla_3\hat{C}\big)\cdot\nabla_3 \hat{C}\, dxdydz\\
%&-k_C\int_{\Gamma_u}(\KISO[\neutral_1]\nabla_3 C_1-\KISO[\neutral_2]\nabla_3 C_2)\cdot{\bf n}\, \hat{C}dxdy\\
%&=
%\int_\Omega 
%\big(\KISO[\neutral_1-\neutral_2]\nabla_3 C_1+\KISO[\neutral_2]\nabla_3\hat{C}\big)\cdot\nabla_3 \hat{C}\, dxdydz\\
%&-k_C\int_{\Gamma_u}(C_1- C_2)\, \hat{C}dxdy\\
&=
\int_\Omega 
\big(\widehat{\mathbb{K}}_{iso}\nabla_3 C_1+\KISO[\SL_2]\nabla_3\hat{C}\big)\cdot\nabla_3 \hat{C}\, dxdydz
-k_C\int_{\Gamma_u}|\hat{C}|^2dxdy,
\end{split}\end{equation}
where $\widehat{\mathbb{K}}_{iso}:=\KISO(\rhobar_1)-\KISO(\rhobar_2)$. % and where the boundary term vanishes for salinity.
For the second term in (\ref{UNIQUE_DIFF_ALL8a}) the lower bound follows from the ellipticity of $\KISO$
\begin{equation}\begin{split}\label{UNIQUE_DIFF_ALL8b}
\int_\Omega 
\big(\KISO[\SL_2]\nabla_3\hat{C}\big)\cdot\nabla_3 \hat{C}\, dxdydz-k_C\int_{\Gamma_u}|\hat{C}|^2dxdy
\geq \mu||\nabla_3\hat{C}||_{\Lsq}^2,
\end{split}\end{equation}
with $\mu:=\min\{K_I, K_D\}$. For the first term in (\ref{UNIQUE_DIFF_ALL8a}) we have
\begin{equation}\begin{split}\label{UNIQUE_DIFF_ALL8c}
|\int_\Omega 
\big(\widehat{\mathbb{K}}_{iso}\nabla_3 C_1\big)\cdot\nabla_3 \hat{C}\, dxdydz|
&\leq
||\hat{\SL}||_{L^\infty}||\nabla_3 C_1||_\Lsq||\nabla_3\hat{C}||_\Lsq%\\
%&\leq ||\hat{\rhobar}||_{H^3(\Omegainthalf}||\nabla_3 C_1||_\Lsq||\nabla_3\hat{C}||_\Lsq.
\end{split}\end{equation}
Collecting (\ref{UNIQUE_DIFF_ALL8a})-(\ref{UNIQUE_DIFF_ALL8c}) and (\ref{UNIQUE_VELOC_DIFF_5})
% Using these estimates in the tracer equation, adding the velocity equations results and compensating all gradient terms 
yields with Lemma \ref{LEMMA_DIFFERENCES}
%This implies for the tracer equation
\begin{equation}\begin{split}\label{UNIQUE_TRACER_DIFF_8d}
&\frac{d}{d t}(|| \hat{v}||_{\Lsq}^2+|| \hat{\theta}||_{\Lsq}^2+|| \hat{S}||_{\Lsq}^2) \\
&\leq
(F+G +c)(|| \hat{v}||_{\Lsq}^2+||\hat{\theta}||_{\Lsq}^2+|| \hat{S}||_{\Lsq}^2) +c||\hat{\SL}||_{L^\infty}\\
&\leq (F+G +c)(|| \hat{v}||_{\Lsq}^2+||\hat{\theta}||_{\Lsq}^2+|| \hat{S}||_{\Lsq}^2).
%&\leq (F+G +c)(|| \hat{v}||_{\Lsq}^2+||\hat{\theta}||_{\Lsq}^2+|| \hat{S}||_{\Lsq}^2) \\
%&+c\sup_{s\in[t-\tau,t]}(|| \hat{v}||_{\Lsq}^2+|| \hat{\theta}||_{\Lsq}^2+|| \hat{S}||_{\Lsq}^2)
\end{split}\end{equation}
For strong solutions $F,G$ are integrable and from Gronwall's inequality follows the continuous dependency on the initial conditions and the uniqueness.
%The Gronwall inequality implies
%\begin{equation*}\begin{split}%\label{UNIQUE_DIFF_ALL11}
%&\big[||  \hat{v}(t)||_{\Lsq}^2+ || \hat{\theta}(t)||_{\Lsq}^2+|| \hat{S}(t)||_{\Lsq}^2\big]\\
%&\leq
%\big[||  \hat{v}(t=0)||_{\Lsq}+ || \hat{\theta}(t=0)||_{\Lsq}+|| \hat{S}(t=0)||_{\Lsq}\big]
%exp\big\{
%c\int_0^t F(s)+G(s) +c\, ds\big\}.
%\end{split}\end{equation*}
%Since $(v_1,\theta_1,S_1)$ and $(v_2,\theta_2,S_2)$ are strong solutions the integral above is bounded. This proves the continuous dependency on the initial conditions. 
%For two solutions with identical initial conditions %$\big[|| u_0||_{\Lsq}+ ||\hat{\theta}_0||_{\Lsq}+|| \hat{S}_0||_{\Lsq}\big]=0$ and
%uniqueness follows. % from (\ref{UNIQUE_DIFF_ALL11}).
\end{proof}
\begin{remark}[Minimal Regularization]\label{remark_minimal_regularization}
The regularization we apply here, has to gain three derivatives (cf. Remark \ref{remark_three_derivatives}). In the context of the $H^1$-regularity investigated here, a potential reduction of the regularization towards two derivatives or less faces two challenges. 
First, to improve the estimates for the nonlinear term  in Step 2  of the proof (see (\ref{PROOF_STRONG_SOLUTION_31})-(\ref{PROOF_STRONG_SOLUTION_41})), which rely on H\"olders inequality and Sobolev embedding. Such an improvement would allow to relax for  the density slope $\SL$ the required regularity. Second, to reduce the regularity requirements for the time derivative of the density gradient (see (\ref{density_eq})). This amounts to improve the regularity estimates of the right-hand side of  (\ref{density_eq}), which comprise, among other terms, the derivative of the transport term. %These two challenges are formidable ones, and an improvement 
\end{remark}

%%%%%%%%%%%%%%%%%%%%%%%%%%%
\noindent\subsection*{Acknowledgements}
This work was supported by the Research Unit FOR 5528 ``Mathematical Study of Geophysical Flow Models: Analysis and Computation'' funded by the German Research Foundation DFG. The research of E.S.T. has also benefited from the inspiring environment of the CRC 1114 ``Scaling Cascades in Complex Systems'', Project Number 235221301, Project C06, funded by Deutsche Forschungsgemeinschaft (DFG). \\

\end{document}